\def\inserttitle{}
\def\insertshorttitle{}
\def\insertauthor{}
\def\insertshortauthor{}
\renewcommand{\title}[2][]{%
  \def\inserttitle{#2}%
  \def\@title{#2}%
  \ifx&#1&%
  \def\insertshorttitle{#2}%
  \else%
  \def\insertshorttitle{#1}%
  \fi%
  \markboth{\hfill\scshape\insertshortauthor\hfill}%
  {\hfill\scshape\insertshorttitle\hfill}
}
\renewcommand{\author}[2][]{%
  \def\insertauthor{#2}%
  \def\@author{#2}%
  \ifx&#1&%
  \def\insertshortauthor{#2}%
  \else%
  \def\insertshortauthor{#1}%
  \fi%
  \markboth{\hfill\scshape\insertshortauthor\hfill}%
  {\hfill\scshape\insertshorttitle\hfill}
}
\renewenvironment{abstract}{\footnotesize\begin{quote}\textbf{Abstract.}~}%
  {\end{quote}}
\newcommand{\keywords}[1]{\par\emph{Keywords}:~#1.\par}
\newcommand{\msc}[1]{\par\emph{MSC(2010)}:~#1.\par}
\theoremstyle{plain}
\newtheorem{theorem}             {Theorem}    [section]
\newtheorem{lemma}      [theorem]{Lemma}
\newtheorem{corollary}  [theorem]{Corollary}
\theoremstyle{definition}
\theoremstyle{remark}
\newtheorem*{remark}             {Remark}
\newcommand{\bbN}{\mathbb{N}}
\newcommand{\bbR}{\mathbb{R}}
\newcommand{\calA}{\mathcal{A}}
\newcommand{\calC}{\mathcal{C}}
\newcommand{\calD}{\mathcal{D}}
\newcommand{\calE}{\mathcal{E}}
\newcommand{\calF}{\mathcal{F}}
\newcommand{\calG}{\mathcal{G}}
\newcommand{\calH}{\mathcal{H}}
\newcommand{\calI}{\mathcal{I}}
\newcommand{\calL}{\mathcal{L}}
\newcommand{\calP}{\mathcal{P}}
\newcommand{\calQ}{\mathcal{Q}}
\newcommand{\calT}{\mathcal{T}}
\newcommand{\calV}{\mathcal{V}}
\renewcommand{\phi}{\varphi}
\renewcommand{\epsilon}{\varepsilon}
\newcommand{\eps}{\varepsilon}
\DeclareMathOperator*{\osc}{osc}
\DeclareMathOperator{\sign}{sign}
\DeclareMathOperator{\trace}{trace}
\DeclareMathOperator{\diag}{diag}
\renewcommand{\div}{\mathrm{div}}
\DeclareMathOperator{\spt}{spt}
\DeclareMathOperator{\interior}{int}
\newcommand{\snabla}[2][\empty]{\ensuremath{%
    \ifx\empty#1%
    \nabla #2%
    \else%
    \nabla_{\!\!#1} #2%
    \fi%
  }%
}
\newcommand{\sdiv}[2][\empty]{\ensuremath{%
    \ifx\empty#1%
    \div #2%
    \else%
    \div_{\!#1} #2%
    \fi%
  }%
}
\newcommand{\slaplace}[2][\empty]{\ensuremath{%
    \ifx\empty#1%
    \Delta #2%
    \else%
    \Delta_{#1} #2%
    \fi%
  }%
}
\newcommand{\wto}{\rightharpoonup}
\newcommand{\wsto}{\stackrel{*}{\rightharpoonup}}
\newcommand{\loc}{\mathrm{loc}}
\newcommand{\sm}{\setminus}
\newcommand{\set}[2][\empty]{\ensuremath{%
    \left\{%
      \ifx\empty#1%
      \relax%
      \else%
      #1:%
      \fi%
      #2%
    \right\}%
  }%
}
\numberwithin{figure}{section}
\numberwithin{table}{section}
\numberwithin{equation}{section}
\title{Kinks in two-phase lipid bilayer membranes}
\author[M. Helmers]{%
  {\scshape Michael Helmers}\\%
  {\footnotesize Institute for Applied Mathematics, University of Bonn}\\[-.6ex]
  {\footnotesize Endenicher Allee 60, 53115 Bonn, Germany}\\[-.6ex]%
  {\footnotesize Email: {\tt helmers@iam.uni-bonn.de}}%
}
\date{}
\begin{document}

\maketitle
\thispagestyle{empty}


\begin{abstract}
  Common models for two-phase lipid bilayer membranes are based on an
  energy that consists of an elastic term for each lipid phase and a
  line energy at interfaces. Although such an energy controls only the
  length of interfaces, the membrane surface is usually assumed to be
  at least $C^1$ across phase boundaries.
  We consider the spontaneous curvature model for closed rotationally
  symmetric two-phase membranes without excluding tangent
  discontinuities at interfaces a~priorily. We introduce a family of
  energies for smooth surfaces and phase fields for the lipid phases
  and derive a sharp interface limit that coincides with the
  $\Gamma$-limit on all reasonable membranes and extends the classical
  model by assigning a bending energy also to tangent discontinuities.
  The theoretical result is illustrated by numerical examples.
  
  \keywords{$\Gamma$-convergence, phase field model,
  lipid bilayer, two-component membrane}

  \msc{%
    49J45, 
    82B26, 
    49Q10, 
    92C10} 
\end{abstract}



\section{Introduction}

Lipid bilayer membranes are the building block of numerous biological
systems and appear in a rich variety of shapes. In particular,
membranes consisting of two or more lipid phases display a complex
morphology, which is affected by their elastic properties as well as
phase separation \cite{SeBeLi91,DoKaNoSpSa93,BaHeWe03}.

A well-established model for the shape of two-phase vesicles is the
spontaneous curvature model, where equilibrium shapes are described as
surfaces minimising the energy
\begin{equation}
  \label{eq:intro:energy}
  \sum_{j=\pm}
  \int_{M^j} k^j ( H-H_s^j )^2 + k_G^j K \,d\mu
  +
  \sigma \calH^1(\partial M^+)
\end{equation}
among all closed surfaces $M = M^+ \cup M^- \cup \partial M^+$, $M^+
\cap M^- = \emptyset$ with prescribed areas for $M^\pm$ (and
prescribed enclosed volume) \cite{Canham70, Evans74, Helfrich73,
  JuLi96, SeLi95}.
Here $H$ and $K$ are the mean and the Gauss curvature of the membrane
surface $M$, and $\mu$ is its area measure. The bending rigidities
$k^\pm > 0$ and the Gauss rigidities $k_G^\pm$ are elastic material
parameters, and $H_s^\pm$, the so-called spontaneous or preferred
curvatures, are supposed to reflect an asymmetry in the membrane.
In the simplest case, the rigidities and spontaneous curvatures are
constant within each lipid phase but different between the two phases.

Apart from the length $\calH^1(\partial M^+)$ of phase boundaries
multiplied by a constant line tension $\sigma$,
\eqref{eq:intro:energy} does not control the membrane surface at the
interface $\partial M^+ = \partial M^-$; studies of two-phase
membranes, however, commonly include an a~priori smoothness
assumption. J{\"u}licher and Lipowsky \cite{JuLi96} consider the
Euler-Lagrange equations of \eqref{eq:intro:energy} for axially
symmetric membranes that have exactly one interface between the two
lipid phases and are $C^1$ across this interface. Du, Wang
\cite{DuWa08} and Lowengrub, R{\"a}tz, Voigt \cite{LoRaVo09} perform
numerical simulations using a phase field for both the membrane and
the lipid phases; Elliot and Stinner \cite{ElSt10_Modeling,ElSt10}
consider a surface phase field model. Convergence to the sharp
interface limit in these approaches is obtained by asymptotic
expansion and under strong smoothness assumptions on the limit
surface; in particular, the membrane is again assumed to be at least
$C^1$ across interfaces.
In \cite{Helmers12} we prove that for rotationally symmetric membranes
this regularity need not be assumed, but is included in the
$\Gamma$-limit of an appropriate surface phase field approximation.

Mathematically, however, the natural setting for the energy
\eqref{eq:intro:energy} does not contain $C^1$ regularity across
interfaces. Moreover, the numerical simulations in
\cite{DuWa08,ElSt10_Modeling} show that equilibrium shapes of models
including $C^1$ regularity have rather ample neck regions, see also
Figure~\ref{fig:kink} on the left, while shapes observed in
experiments do not \cite{BaHeWe03}.
In this paper we study a diffuse interface approximation for the lipid
phases of rotationally symmetric membranes whose sharp interface limit
allows tangent discontinuities or kinks at interfaces, thus
infinitesimally small neck regions. More precisely, for a closed
surface $M_\gamma$ obtained by rotating a curve $\gamma$ about the
$x$-axis and an associated rotationally symmetric phase field $u
\colon M_\gamma \to \bbR$, we consider an approximate energy of the
form
\begin{equation}
  \label{eq:intro:approx-energy}
  \int_{M_\gamma}
  u^2 k(u) (H-H_s(u))^2 + u^2 k_G(u) K \,d\mu
  +
  \int_{M_\gamma}
  \eps |\snabla[M_\gamma] u|^2 + \frac{1}{\eps} W(u) \,d\mu
  +
  \int_{M_\gamma}
  \eps |B|^2 \,d\mu.
\end{equation}
Apart from the surface setting, the second integral in
\eqref{eq:intro:approx-energy} is the usual Modica-Mortola
approximation of the interface energy. With a standard double well
potential such as $W(u)=(1-u^2)^2$, the phase field is forced to
$\pm1$ as $\eps\to0$ and the first integral in
\eqref{eq:intro:approx-energy} resembles the curvature energy in
\eqref{eq:intro:energy}, provided that $k$, $k_G$, and $H_s$ are
extensions of $k^\pm = k(\pm1)$, $k_G^\pm = k_G(\pm1)$, and $H_s^\pm =
H_s(\pm1)$.
The third integral, where $B$ denotes the second fundamental form of
$M_\gamma$, is on the one hand necessary for compactness of
energy-bounded sequences as $\eps\to0$. On the other hand, it assigns a
curvature to kinks in the limit by penalising their size, thereby
providing a meaningful extension of \eqref{eq:intro:energy}.

The bending parameters and their extensions play a crucial role in our
approximation. For biological membranes it is well-known that
$k^\pm>0$, but measurements of the Gauss rigidities are
scarce. Available data suggest that $k_G^\pm < 0$ and that the
inequalities $k^\pm > -k_G^\pm/2$ hold at least for some monolayers
\cite{SiKo04,TeKhSe98}.
It turns out that we need similar conditions for the extensions: we
let $k,k_G \colon \bbR \to \bbR$ be continuous and bounded functions
such that $k(\pm1)=k^\pm$, $k_G(\pm)=k_G^\pm$,
\begin{equation}
  \label{eq:intro:param_restrictions}
  \inf_\bbR k > 0,
  \qquad
  \sup_\bbR k_G < 0,
  \qquad\text{and}\qquad
  \inf_\bbR \left(k+k_G/2\right) > 0.
\end{equation}
The preferred curvature extension $H_s \colon \bbR \to \bbR$ can be
any continuous bounded function such that $H_s(\pm1)=H_s^\pm$.
Under these conditions there is a $\delta>0$ such that
$(1-\delta)k>-k_G/2$, and Young's inequality together with $|B|^2 =
H^2 - 2K$ implies
\begin{equation}
  \begin{aligned}
    \label{eq:intro:sec-fform-bound}
    u^2 k &(H-H_s)^2 + u^2 k_G K
    \\
    &=
    - u^2 \frac{k_G}{2} |B|^2
    + u^2 \left( k+\frac{k_G}{2} \right) H^2
    + u^2 k H_s^2 - 2 u^2 k H H_s
    \\
    &\geq
    - u^2 \frac{k_G}{2} |B|^2
    + u^2 \left( (1-\delta)k + \frac{k_G}{2} \right) H^2
    - \frac{1-\delta}{\delta} \| k H_s^2 \|_\infty \| u \|_\infty^2
    \\
    &\geq
    - C \|u\|_\infty^2,
  \end{aligned}
\end{equation}
where $C>0$ depends only on $k$, $k_G$, and $H_s$.
By \eqref{eq:intro:sec-fform-bound} the energy
\eqref{eq:intro:approx-energy} provides weighted $L^2$-bounds for $B$
and $H$, which are used to establish equi-coercivity and a lower bound
inequality.
These bounds degenerate for $u \approx 0$, and similarly to the
Ambrosio-Tortorelli approximation for free discontinuity problems
\cite{AmTo90}, this degeneracy allows curvatures to become large and
yield kinks in the limit.
Interestingly, conditions such as \eqref{eq:intro:param_restrictions}
also appear in \cite{BeMu10}, where the authors obtain a partial
$\Gamma$-convergence result for a diffuse interface approximation of
the membrane surface of single-phase vesicles.

Under the above restrictions on the parameters, we prove that a
limit of \eqref{eq:intro:approx-energy} is given by
\begin{multline}
  \label{eq:intro:limit-energy}
  \int_{M_\gamma(\set{y>0} \sm S)}
  k(u) (H-H_s(u))^2 + k_G(u) K \,d\mu
  \\
  +
  2\pi \sum_{s \in S} (\sigma + \hat\sigma |[\gamma'](s)|) y(s)
  +
  2\pi \hat\sigma \calL_\gamma(\set{y=0})
\end{multline}
for membranes $(\gamma,u)$, $\gamma=(x,y)$ with membrane surface
$M_\gamma$ and lipid phases $u \in \set{\pm1}$. Here $S$ denotes the
set of interfaces, that is, the set of jumps of $u$, and of tangent
discontinuities of $\gamma$. An interface is penalised essentially by
its length $2\pi y(s) = \calH^1(M_\gamma(\set{s}))$, while a kink
carries an additional ``bending'' energy $2\pi|[\gamma'](s)|y(s)$,
where $|[\gamma'](s)|$ is the modulus of the angle enclosed by the two
one-sided tangent vectors at $s$ modulo $2\pi$. The constants $\sigma$
and $\hat\sigma$ are given by
\begin{equation}
  \label{eq:intro:sigma-sigma-hat}
  \sigma = \int_{-1}^1 2 \sqrt{W(u)} \,d u
  \qquad\text{and}\qquad
  \hat\sigma = 2\sqrt{W(0)}.
\end{equation}
The set $M_\gamma(\set{y>0} \sm S)$ is the part of $M_\gamma$ that is
obtained by rotating the restriction of $\gamma$ to $\set{y>0} \sm S$,
and $\calL_\gamma(\set{y=0})$ denotes the length of the segment
$\gamma(\set{y=0})$. Here and in the following $\set{y=0}$ is the set
where $\gamma$ lies on the axis of revolution and $\set{y>0}$ the set
where it does not.
A limit membrane $(\gamma,u)$ for which \eqref{eq:intro:limit-energy}
is finite may touch the axis of revolution not only at the end points
of $\gamma$, but also in regions in the interior, see
Figure~\ref{fig:model:curve-in-d} for a non membrane-like example. Our
limit equals the $\Gamma$-limit of $\calF_\eps$ on ``good'' membranes
where $|\gamma'|=\text{const}$, and exactly regions on the axis of
revolution prevent us from obtaining full $\Gamma$-convergence.
However, a ``good'' limit is for instance a membrane that touches the
axis of revolution only at countably many points, hence our result
covers reasonable biological membranes.

The paper is organised as follows. Section \ref{sec:surf} is a brief
recapitulation of surfaces of revolution. In Section \ref{sec:model}
we state our approximate setting, the limit, and the convergence
result; we also present some numerical examples and compare our model
to one without kinks. The proof of the convergence theorem is
presented in Section \ref{sec:proof}, and in Section
\ref{sec:some-generalisations} we consider some generalisations of our
result, including a brief discussion of the full $\Gamma$-limit.


\section{Surfaces of revolution}
\label{sec:surf}

Let $I \subset \bbR$ be an open and bounded interval and $\gamma=(x,y)
\colon I \to \bbR \times \bbR_{\geq0}$ a Lipschitz parametrised curve
in the upper half of the $x y$-plane.
We denote by $M_\gamma$ the surface in $\bbR^3$ obtained by rotating
$\gamma$ about the $x$-axis, that is,  $M_\gamma$ is the image of
$\overline{I} \times [0,2\pi)$ under the Lipschitz continuous map
\begin{equation*}
  \Phi \colon
  (t,\theta) \mapsto (x(t), y(t) \cos\theta, y(t) \sin\theta);
\end{equation*}
$\gamma$ is called generating curve of
$M_\gamma$. 
Since $\gamma$ and $\Phi$ are Lipschitz, they are weakly and almost
everywhere differentiable with bounded derivatives. The length of
$\gamma$, the area measure of $M_\gamma$, and the area of $M_\gamma$
are well-defined and given by
\begin{equation*}
  \calL_{\gamma} =  \int_{I} |\gamma'(t)| \,d t,
  \qquad
  d\mu = |\gamma'| y \,d t \,d\theta,
  \qquad\text{and}\qquad
  \calA_\gamma = 2\pi \int_{I} |\gamma'| y \,d t,
\end{equation*}
respectively. 
If $J$ is a measurable subset of $I$, we denote by $M_\gamma(J)$ the
part of $M_\gamma$ that is obtained by rotating the curve segment
$\gamma(J)$ and write $\calL_\gamma(J)$ and $\calA_\gamma(J)$ for the
corresponding length and area.

After removing at most countably many constancy intervals, pulling holes
together and reparametrising, we may assume that $\gamma$ is
parametrised with constant speed $|\gamma'| \equiv \calL_\gamma/|I| =:
q_\gamma>0$ almost everywhere in $I$~\cite[Lemma 5.23]{BuGiHi98}.
Then the tangent space $\calT_{(t,\theta)} M_\gamma$, which exists
for almost every $(t,\theta) \in I \times [0,2\pi)$, is spanned by the
orthonormal vectors
\begin{equation*}
  \xi_1
  =
  \frac{\partial_t \Phi}{|\partial_t \Phi|}
  =
  \frac{1}{|\gamma'|} \left( x', y' \cos\theta, y' \sin\theta \right)
  \qquad\text{and}\qquad
  \xi_2
  =
  \frac{\partial_\theta \Phi}{|\partial_\theta \Phi|}
  =
  \left( 0, -\sin\theta, \cos\theta \right),
\end{equation*}
and a unit normal is given by
\begin{equation}
  \label{eq:surf:unit-normal}
  \nu
  =
  \frac{\partial_t \Phi \wedge \partial_\theta \Phi}
  {|\partial_t \Phi \wedge \partial_\theta \Phi|}
  =
  \frac{1}{|\gamma'|} \left( -y', x' \cos\theta, x' \sin\theta \right).
\end{equation}
Since $M_\gamma$ is not necessarily embedded, tangent space, normal,
and the quantities defined below are associated to the parameter
$(t,\theta)$ and not to the point $\Phi(t,\theta)$ on the surface
$M_\gamma$.

We consider a function $f \colon M_\gamma \to \bbR^k$ to be a function
$F \colon \overline{I} \times [0,2\pi) \to \bbR^k$ of the parameters;
on embedded parts of $M_\gamma$ this amounts to $f(\Phi(t,\theta)) =
F(t,\theta)$.
Given a tangent vector $\xi$ at $(t_0,\theta_0) \in I \times
(0,2\pi)$, the directional derivative of $f$ in direction $\xi$ is
defined as
\begin{equation*}
  D_\xi f(t_0,\theta_0)
  = \left. \frac{d}{d s} F(\eta(s)) \right|_{s=0},
\end{equation*}
where $\eta \colon (-\delta,\delta) \to I \times [0,2\pi)$ is a
$C^1$-curve satisfying $\eta(0) = (t_0,\theta_0)$ and $ \frac{d}{d s}
\Phi(\eta(s)) \big|_{s=0} = \xi$.
The tangential gradient of $f \colon M_\gamma \to \bbR$ is the vector
\begin{equation*}
  \snabla[M_\gamma]{f} = (D_{\xi_1} f) \xi_1 + (D_{\xi_2} f) \xi_2,
\end{equation*}
and since $D_{\xi_1} f = |\gamma'|^{-1} \partial_t F$, we obtain for a
rotationally symmetric $f$, which does not depend on $\theta$, that
\begin{equation*}
  \snabla[M_\gamma]{f}(t,\theta)
  = \frac{F'(t)}{|\gamma'(t)|} \xi_1(t,\theta)
  \qquad \text{and} \qquad
  |\snabla[M_\gamma]{f}(t,\theta)| = \frac{|F'(t)|}{|\gamma'(t)|},
\end{equation*}
where $| \cdot |$ is the Euclidean norm in $\bbR^3$.


For the rest of this section let $\gamma \in W^{2,1}_{\loc}(I;
\bbR^2)$ be twice weakly differentiable, thus twice differentiable
almost everywhere, and assume that $y>0$ in $I$. Then the normal $\nu$
is weakly differentiable, thus the shape operator $S \colon
\calT_{(t_0,\theta_0)}M \to \calT_{(t_0,\theta_0)}M$, $\zeta \mapsto
D_\zeta \nu$ and the second fundamental form $B \colon
\calT_{(t_0,\theta_0)}M \times \calT_{(t_0,\theta_0)}M \to \bbR$,
$(\zeta,\xi) \mapsto \xi \cdot S\zeta$ are well-defined for almost
every $(t_0,\theta_0)$. The matrix representation with respect to the
basis $\set{\xi_1,\xi_2}$ of both is $\diag(\kappa_1,\kappa_2)$ where
\begin{equation}
  \label{eq:surf:curvatures}
  \kappa_1
  =
  \frac{-y'' x' + y' x''}{|\gamma'|^3}
  \quad\text{and}\quad
  \kappa_2
  =
  \frac{x'}{y|\gamma'|}.
\end{equation}
The eigenvalues $\kappa_1, \kappa_2$ of $S$ are the principal
curvatures of $M_\gamma$, and $\kappa_1$ is just the signed curvature
of $\gamma$ with respect to the normal $\frac{1}{|\gamma'|}(y',-x')$.
The mean curvature $H$ and the Gauss curvature $K$ of $M_\gamma$ are
\begin{equation*}
  H = \trace S = \kappa_1 + \kappa_2
  \qquad\text{and}\qquad
  K = \det S = \kappa_1 \kappa_2.
\end{equation*}
By $|B|^2 = |S|^2 = \kappa_1^2 + \kappa_2^2$ we denote the squared
Frobenius norm of $S$ and $B$.
The signs of the principal curvatures and the mean curvature depend on
the choice of the normal. In the above setting a unit ball has outer
unit normal $\nu$ as in \eqref{eq:surf:unit-normal} and curvatures
$\kappa_1=\kappa_2=+1$ when it is parametrised ``from left to right''
such that $x' \geq 0$, for instance by $\gamma(t) = ( -\cos t, \sin
t)$, $t \in [0,\pi]$.

Let $\phi \colon I \to \bbR$ be an angle function for $\gamma$, that
is, let $\phi(t)$ be the angle between the positive $x$-axis and the
tangent vector $\gamma'(t)$. Since $W^{2,1}_{\loc}(I)$ embeds into
$C^1_{\loc}(I)$, the angle $\phi$ can be chosen continuously in $I$ and
is then uniquely determined up to adding multiples of $2\pi$.
In terms of $\phi$, the curve $\gamma$ is characterised by fixing one
point and
\begin{equation}
  \label{eq:surf:curve-derivative-angle}
  x' = |\gamma'| \cos \phi,
  \qquad
  y' = |\gamma'| \sin \phi.
\end{equation}
The principal curvatures take the form
\begin{equation}
  \label{eq:surf:curvatures-angle}
  \kappa_1 = - \frac{\phi'}{|\gamma'|},
  \qquad
  \kappa_2 = \frac{\cos \phi}{y},
\end{equation}
and we have
\begin{equation}
  \label{eq:surf:gauss-curv-angle}
  K
  =
  - \frac{\phi' \cos \phi}{|\gamma'| y}
  =
  - \frac{\left(\sin \phi \right)'}{|\gamma'| y}
  =
  - \frac{\left( y' / |\gamma'| \right)'}{|\gamma'|y}.
\end{equation}
If $\gamma$ is parametrised with constant speed $q_\gamma>0$, we see
from \eqref{eq:surf:gauss-curv-angle} that
\begin{equation}
  \label{eq:surf:gauss-curv-integral}
  \int_{M_\gamma(J)} |K| \,d\mu
  =
  \frac{2\pi}{q_\gamma} \int_J |y''| \,d t
\end{equation}
is the $L^1$-norm of $y''$ up to a constant factor. Moreover, for such
$\gamma$ we have $\phi'^2 q_\gamma^2 = |\gamma''|^2$, and therefore
\begin{equation}
  \label{eq:surf:kappa1-int}
  \int_{M_\gamma(J)} \kappa_1^2 \,d\mu
  =
  \frac{2\pi}{q_\gamma} \int_J |\phi'|^2 y \,d t
  =
  \frac{2\pi}{q_\gamma^3} \int_J |\gamma''|^2 y \,d t
\end{equation}
is a weighted $L^2$-norm of $\phi'$ and $\gamma''$.

For a more detailed discussion of surfaces and basic geometric
analysis we refer to \cite{doCarmo76} or \cite[\S 7]{Simon83}.


\section{The models}
\label{sec:model}


\subsection{Approximate setting}
\label{sec:approximate-setting}

We study the energy \eqref{eq:intro:approx-energy} with continuous
bounded functions $H_s, k, k_G$. The precise values of $k$ and $k_G$
do not enter our arguments as long as
\eqref{eq:intro:param_restrictions} is satisfied, so we assume $k
\equiv k^\pm = 1 = -k_G^\pm \equiv -k_G$ for simplicity of notation.
Thus, our approximate energy is given by
\begin{equation}
  \label{eq:model:eps-energy}
  \calF_\eps(\gamma,u) = \calH_\eps(\gamma,u) + \calI_\eps(\gamma,u),
\end{equation}
where 
\begin{equation*}
  \calH_\eps(\gamma,u)
  =
  \int_{M_\gamma} u^2  \left( H-H_{s}(u) \right)^2  - u^2 K  \,d\mu
\end{equation*}
is the Helfrich energy of the membrane $(\gamma,u)$ and
\begin{equation*}
  \calI_{\eps}(\gamma,u)
  =
  \int_{M_\gamma} \eps |\snabla[M_\gamma]{u}|^2
  + \frac{1}{\eps} W(u)  \,d\mu
  +
  \eps \int_{M_\gamma} |B|^2  \,d\mu
\end{equation*}
the interface energy.
For $J \subset I$ we denote by $\calF_\eps(\gamma,u,J)$,
$\calH_\eps(\gamma,u,J)$, and $\calI_\eps(\gamma,u,J)$ the
corresponding integrals restricted to the set $M_\gamma(J)$.
The double well potential $W \colon \bbR \to [0,\infty)$ is a
continuous function that vanishes only in $\pm1$ and is $C^2$ around
these points; for simplicity of notation we assume that $W$ is
symmetric.

We study \eqref{eq:model:eps-energy} for rotationally symmetric
membranes $(\gamma,u) \in \calC \times \calP$, where
\begin{equation*}
  \begin{split}
  \calC
  :=
  \Big\{
    &\gamma = (x,y) \in C^{0,1}(I; \bbR^2)
    \cap W^{2,1}_{\loc}(I; \bbR^2) :\\
    &|\gamma'| = \text{const}, \;
    y(\partial I)=\set{0}, \;
    y(I) \subset (0,\infty), \;
    x' \geq 0,
    \int_{M_\gamma} |B|^2 \,d\mu < \infty, \;
    \calA_\gamma = A_0
  \Big\}
  \end{split}
\end{equation*}
and
\begin{equation*}
  \calP
  :=
  \Big\{
    u \in W^{1,1}_{\loc}(I) :
    \int_{M_\gamma} |\snabla[M_\gamma]{u}|^2 \,d\mu < \infty, \;
    \| u \|_\infty \leq C_0, \;
    \int_{M_\gamma} u \,d\mu = m A_0.
  \Big\}
\end{equation*}
The first three conditions in the definition of $\calC$ ensure that
the constant speed curve $\gamma$ generates a closed surface
$M_\gamma$. The requirement $x' \geq 0$ fixes the orientation and,
since by embedding $\gamma \in \calC$ belongs to $C^1_\loc(I;\bbR^2)$,
it guarantees that $M_\gamma$ is embedded; its main purpose is to
exclude some very non membrane-like behaviour such as infinitely many
self-intersections or zigzagging of curves in the limit.
The $L^2$-bound on $B$ and the first two conditions on the phase
fields ensure that \eqref{eq:model:eps-energy} is well-defined for
$(\gamma,u) \in \calC \times \calP$.
The uniform bound $\|u\|_\infty \leq C_0$ with a constant $C_0 \gg 1$
is more restrictive than necessary, and in many places in the proof it
can be replaced by weaker conditions such as integral bounds on $u$.
We impose the $L^\infty$ bound for convenience though, and as one
expects phase fields with small energy to be roughly between $+1$ and
$-1$ for small $\eps$, this is not a strong restriction.

The area constraints for the two lipid phases are incorporated by
prescribing the area of $M_\gamma$ and the phase integral: if in
\eqref{eq:intro:energy} the areas of the lipid phases $M^\pm$ are
$A^\pm$, then the choice $A_0 = A^+ + A^-$ and $m=(A^+-A^-)/A_0$
ensures the correct phase areas in the limit $\eps\to0$. Through the
integral constraint the set $\calP$ depends on the chosen $\gamma \in
\calC$, but since we usually consider pairs or membranes $(\gamma,u)$,
we suppress this fact in the notation.
We neglect the constraint on the enclosed volume of the membrane,
because it is not necessary for our considerations. It will become
obvious that this constraint changes continuously under the convergence
we prove and can thus be reintroduced without changes of the
arguments.

\begin{remark}
  For $\gamma \in \calC$ it is easy to see that $M_\gamma$ is a $C^1$
  surface and $M_\gamma(J)$ is a $W^{2,2}$ surface for any $J \Subset
  I$. More precisely, $\gamma=(x,y) \in C^1(\overline{I};\bbR^2) \cap
  W^{2,2}_{\loc}(I;\bbR^2)$, $y \in W^{2,1}(I)$, $\gamma'$ is
  perpendicular to the axis of revolution at $\partial I$, and these
  regularity properties cannot be improved \cite[Section 2.2]{Helmers12}.
  The energy \eqref{eq:model:eps-energy} is invariant under
  reparametrisations that preserve the orientation of $\gamma$ and the
  regularity properties of $(\gamma,u)$. In particular, if
  $(\gamma,u)$ satisfies all requirements of $\calC \times \calP$ but
  only $|\gamma'| \not=0$ instead of $|\gamma'| = \text{const}$, the
  corresponding constant speed parametrisation belongs to $\calC
  \times \calP$ and has the same energy.  Hence, considering only
  $|\gamma'| = \text{const}$ is no geometric restriction.
\end{remark}

By our assumptions \eqref{eq:intro:param_restrictions} on the bending
parameters, the calculations in \eqref{eq:intro:sec-fform-bound}
yield
\begin{equation}
  \label{eq:model:bending-positivity}
  \begin{split}
    \calH_\eps(\gamma,u,J)
    &\geq
    \int_{M_\gamma(J)} \frac{1}{2} u^2 |B|^2
    -
    u^2 H_{s}(u)^2 \,d\mu
    \\
    &\geq
    \int_{M_\gamma(J)} \frac{1}{2} u^2 |B|^2 \,d\mu
    -
    \|H_{s}\|_{\infty}^2 \|u\|_\infty^2 \calA_\gamma
  \end{split}
\end{equation}
for any $J \subset I$.
Since \eqref{eq:model:bending-positivity} provides a lower bound
for $\calH_\eps$ on $\calC \times \calP$, also $\calF_\eps$
is bounded from below. Moreover, we
have the individual bounds
\begin{align}
  \label{eq:model:modulus-energy-bound}
  |\calF_\eps(\gamma,u)|
  &\leq
  C \left( \calF_\eps(\gamma,u) + \|H_{s}\|_\infty^2 C_0^2 A_0 \right),
  \\
  \label{eq:model:phase-energy-bound}
  \calI_\eps(\gamma,u)
  &\leq
  \calF_\eps(\gamma,u) + \|H_{s}\|_\infty^2 C_0^2 A_0,
  \\ 
  \label{eq:model:second-fform-bound}
  \int_{M_\gamma} u^2 |B|^2 \,d\mu + \eps \int_{M_\gamma} |B|^2 \,d\mu
  &\leq
  C \left( \calF_\eps(\gamma,u) + \|H_{s}\|_\infty^2 C_0^2 A_0 \right)
\end{align}
for all $(\gamma,u) \in \calC \times \calP$, where $C>0$ is a generic
constant independent of $(\gamma,u)$. From
\eqref{eq:model:phase-energy-bound} and
\eqref{eq:model:second-fform-bound} we derive a bound on the first
variation of $M_\gamma$, that is, on the first variation of the area
of $M_\gamma$.

\begin{lemma}
  \label{lem:model:variation-bound}
  There is a constant $C>0$ such that
  \begin{equation*}
    \frac{1}{\sqrt{2}}
    \int_{M_\gamma} |H| \,d\mu
    \leq
    \int_{M_\gamma} |B| \,d\mu
    \leq
    C ( \calF_\eps(\gamma,u) + 1 )
  \end{equation*}
  for all $(\gamma,u) \in \calC \times \calP$.
\end{lemma}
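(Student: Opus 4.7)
The plan is to establish the two inequalities separately. The first is purely pointwise linear algebra: since the eigenvalues of the shape operator satisfy $|H|=|\kappa_1+\kappa_2|\leq\sqrt{2}\sqrt{\kappa_1^2+\kappa_2^2}=\sqrt{2}\,|B|$ by Cauchy–Schwarz in $\mathbb{R}^2$, integration yields $\int_{M_\gamma}|H|\,d\mu\leq\sqrt{2}\int_{M_\gamma}|B|\,d\mu$ at once.

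For the second inequality the idea is to rewrite $|B|=|B|(\eps+u^2)^{1/2}\cdot(\eps+u^2)^{-1/2}$ and apply Cauchy–Schwarz:
\begin{equation*}
  \int_{M_\gamma}|B|\,d\mu
  \leq
  \left(\int_{M_\gamma}(\eps+u^2)|B|^2\,d\mu\right)^{1/2}
  \left(\int_{M_\gamma}\frac{1}{\eps+u^2}\,d\mu\right)^{1/2}.
\end{equation*}
The first factor is bounded by $C(\calF_\eps(\gamma,u)+1)^{1/2}$ directly from \eqref{eq:model:second-fform-bound}, so the whole task reduces to controlling the second factor by $C(\calF_\eps(\gamma,u)+1)^{1/2}$.

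To bound the second factor I would split $M_\gamma$ according to whether $|u|\geq 1/\sqrt{2}$ or $|u|<1/\sqrt{2}$. On the first region one trivially estimates $\int (\eps+u^2)^{-1}\,d\mu\leq 2\calA_\gamma=2A_0$. On the second region the potential $W$ is bounded away from zero: since $W$ is continuous and vanishes only at $\pm1$, the constant $W_0:=\inf_{|u|\leq 1/\sqrt{2}}W(u)>0$. Using \eqref{eq:model:phase-energy-bound} to bound $\int_{M_\gamma}W(u)\,d\mu\leq\eps\,\calI_\eps(\gamma,u)\leq \eps(\calF_\eps(\gamma,u)+\|H_s\|_\infty^2 C_0^2 A_0)$, Markov's inequality gives
\begin{equation*}
  \calA_\gamma(\set{|u|<1/\sqrt{2}})
  \leq
  \frac{1}{W_0}\int_{M_\gamma}W(u)\,d\mu
  \leq
  \frac{\eps}{W_0}\bigl(\calF_\eps(\gamma,u)+C\bigr),
\end{equation*}
hence $\int_{\{|u|<1/\sqrt{2}\}}(\eps+u^2)^{-1}\,d\mu\leq\eps^{-1}\cdot\frac{\eps}{W_0}(\calF_\eps(\gamma,u)+C)$. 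Combining the two regions yields $\int_{M_\gamma}(\eps+u^2)^{-1}\,d\mu\leq C(\calF_\eps(\gamma,u)+1)$, and multiplying the two factors finishes the proof.

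The only mildly delicate point is the second factor: one must exploit simultaneously the weight $\eps$ in the $|B|^2$ bound (needed where $u$ is small) and the $u^2$ weight (used where $u$ is of order one), and pair this with the Modica–Mortola control on the area of $\{|u|<1/\sqrt{2}\}$. Once one recognises that the Cauchy–Schwarz with weight $\eps+u^2$ matches exactly the two ingredients in \eqref{eq:model:second-fform-bound}, the rest is routine.
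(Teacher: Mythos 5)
Your argument is correct and rests on the same two ingredients as the paper's proof: the combined $L^2$ bound $\int(\eps+u^2)|B|^2\,d\mu\leq C(\calF_\eps+1)$ from \eqref{eq:model:second-fform-bound}, and the Modica--Mortola control on $\calA_\gamma$ of the set where $|u|$ is small via the potential term. The only presentational difference is that you apply a single weighted Cauchy--Schwarz with weight $\eps+u^2$ and then split the dual integral $\int(\eps+u^2)^{-1}\,d\mu$, whereas the paper first splits $M_\gamma$ into $\set{|u|\leq 1/2}$ and $\set{|u|>1/2}$ and applies H\"older separately with weights $\eps$ and $u^2$ on the two pieces; these are the same estimate with the order of the two steps exchanged, so I would count this as essentially the paper's proof, organized a little more compactly.
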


\begin{proof}
  Splitting $M_\gamma$ into two pieces where the phase field is small
  and large, respectively, and applying H{\"o}lder's inequality, we
  get
  \begin{align*}
    \int_{M_\gamma} |B| \,d\mu
    &\leq
    \int_{M_\gamma(\set{|u| \leq 1/2})} |B| \,d\mu
    +
    \int_{M_\gamma(\set{|u| > 1/2})} |B| \,d\mu
    \\
    &\leq
    \left( \frac{1}{\eps} \calA_\gamma(\set{|u| \leq 1/2}) \right)^{1/2}
    \left( \int_{M_\gamma} \eps |B|^2 \,d\mu \right)^{1/2}
    +
    2 \sqrt{A_0} \left( \int_{M_\gamma} u^2|B|^2 \,d\mu \right)^{1/2}.
  \end{align*}
  From the interface energy we obtain the estimate
  \begin{equation*}
    \calI_\eps(\gamma,u)
    \geq
    \int_{M_\gamma(\set{|u|\leq1/2})} \frac{1}{\eps} W(u) \,d\mu
    \geq
    \left( \inf_{|u|\leq1/2} W(u) \right)
    \frac{\calA_\gamma(\set{|u|\leq1/2})}{\eps},
  \end{equation*}
  and since $W$ has a positive minimum on $[-1/2,1/2]$, we find
  \begin{equation*}
    \int_{M_\gamma} |B| \,d\mu
    \leq
    C \left( \calI_\eps(\gamma,u)^{1/2} + 1 \right)
    \left[
      \left( \int_{M_\gamma} \eps |B|^2 \,d\mu \right)^{1/2} +
      \left( \int_{M_\gamma} u^2 |B|^2 \,d\mu \right)^{1/2}
    \right].
  \end{equation*}
  The conclusion now follows from \eqref{eq:model:phase-energy-bound},
  \eqref{eq:model:second-fform-bound}, and the elementary inequality
  $\sqrt{a}+\sqrt{b} \leq \sqrt{2(a+b)} \leq \sqrt{2}
  (\sqrt{a}+\sqrt{b})$ for $a,b \geq 0$.
\end{proof}

To establish compactness, we use that the first variation of
$M_\gamma$ bounds the length of the generating curve $\gamma$. Such a
bound is for instance deduced from the well-known bound on the
intrinsic diameter from \cite{Topping08} or for surfaces of revolution
easily proved by an integration by parts as in \cite[Section
2.3]{Helmers12}.

\begin{lemma}
  \label{lem:surf:length-bound}
  Let $\gamma = (x,y) \in C^{0,1}(I;\bbR^2) \cap
  W^{2,1}_{\loc}(I;\bbR^2)$ be a curve such that $y(I) \subset
  (0,\infty)$, $y(\partial I) = \set{0}$. Then
  \begin{equation*}
    \int_{M_\gamma} |H| d\mu
    \geq
    2\pi \calL_\gamma.
  \end{equation*}
\end{lemma}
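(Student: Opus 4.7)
Both sides of the inequality are invariant under orientation-preserving reparametrisation, so I may assume $|\gamma'|\equiv q_\gamma$ is constant and work with the continuous angle function $\phi$ from Section~\ref{sec:surf}. The plan is to derive a pointwise identity of the form
$|\gamma'| = (y\sin\phi)' + \cos\phi \cdot H y |\gamma'|$,
integrate it over $I$ so that the boundary term drops out because of $y(\partial I)=\{0\}$, and then use $|\cos\phi|\leq 1$ to pass from $\int H\cos\phi\,d\mu$ to $\int |H|\,d\mu$.

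The computation rests on $x'=|\gamma'|\cos\phi$, $y'=|\gamma'|\sin\phi$ and the formulas \eqref{eq:surf:curvatures-angle}, which give $H y |\gamma'| = |\gamma'|\cos\phi - y\phi'$. The product rule then yields
\begin{equation*}
  (y\sin\phi)'
  = y'\sin\phi + y\phi'\cos\phi
  = |\gamma'|\sin^2\phi + y\phi'\cos\phi
  = |\gamma'| - \cos\phi\,\bigl(|\gamma'|\cos\phi - y\phi'\bigr)
  = |\gamma'| - \cos\phi\cdot H y |\gamma'|.
\end{equation*}
I may assume $\int_{M_\gamma}|H|\,d\mu<\infty$, since otherwise the claim is trivial; in this case $H y |\gamma'|\in L^1(I)$, and the identity above forces $(y\sin\phi)'\in L^1(I)$. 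Because $y\in C(\overline{I})$ with $y(\partial I)=\{0\}$ and $\sin\phi$ is bounded, $y\sin\phi$ extends continuously to $\overline{I}$ with vanishing boundary values. Integrating the identity over $I$ therefore gives
\begin{equation*}
  0 = \bigl[y\sin\phi\bigr]_{\partial I}
  = \int_I (y\sin\phi)'\,dt
  = \calL_\gamma - \int_I \cos\phi\cdot H y |\gamma'|\,dt,
\end{equation*}
and multiplying by $2\pi$, together with $d\mu=|\gamma'|y\,dt\,d\theta$, yields
$2\pi\calL_\gamma = \int_{M_\gamma} H\cos\phi\,d\mu \leq \int_{M_\gamma} |H|\,d\mu$.

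The only real obstacle is the integrability of $(y\sin\phi)'$ up to $\partial I$: since $\gamma\in W^{2,1}_{\loc}(I;\bbR^2)$ the derivative $\phi'$ is only in $L^1_{\loc}(I)$, so a priori $y\phi'\cos\phi$ need not be integrable near the endpoints of $I$ where $y$ vanishes but $\phi'$ may blow up. The identity above bypasses this: it ties $(y\sin\phi)'$ directly to $Hy|\gamma'|$, whose global integrability is guaranteed by the finiteness assumption on $\int |H|\,d\mu$, so the boundary calculation is rigorous.
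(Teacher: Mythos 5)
Your computation is correct, and the strategy — reparametrise to constant speed, pass to the angle function, form a primitive that vanishes at $\partial I$ and whose derivative absorbs $H$, then integrate — is exactly the integration-by-parts argument the paper points to (it refers to \cite[Section 2.3]{Helmers12} rather than writing the proof out). The one genuine difference is the choice of primitive. The paper's variant (visible in the analogous computation \eqref{eq:liminf-with-mean-curvature1}) integrates $y\phi'$ against $\phi$ itself, which leads to the identity $\tfrac{1}{2\pi}\int H\,d\mu = q\int(\phi\sin\phi+\cos\phi)\,dt - \phi y|_{\partial}$ and then requires the pointwise estimate $\phi\sin\phi+\cos\phi\ge 1$, valid only for $|\phi|\le\pi/2$, i.e.\ under a sign restriction $x'\ge 0$. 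You instead take $y\sin\phi$ as the primitive, obtaining the exact identity $\int_{M_\gamma}H\cos\phi\,d\mu = 2\pi\calL_\gamma$, after which the lemma follows from the trivial bound $|\cos\phi|\le 1$ without any restriction on $\phi$. This is a small but real improvement: the boundary term $y\sin\phi$ dies for free, no pointwise inequality is needed, and the argument is orientation-independent, matching the lemma's hypotheses (which do not assume $x'\ge 0$). Your handling of integrability — reducing the global $L^1$ bound on $(y\sin\phi)'$ to the finiteness of $\int|H|\,d\mu$ and concluding that $y\sin\phi$ is absolutely continuous up to the boundary — is also sound.
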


\begin{remark}
  Combining Lemmas \ref{lem:model:variation-bound} and
  \ref{lem:surf:length-bound}, we see that any sequence
  $(\gamma_\eps,u_\eps) \in \calC \times \calP$ with uniformly bounded
  energy has uniformly bounded length.
  For this conclusion we could have argued with $\int u^2 H^2 + \eps
  H^2 \,d\mu$ directly, instead of using the second fundamental form
  in the proof of Lemma \ref{lem:model:variation-bound}.
  The following example shows that an additional energy term like
  $\eps \int H^2 \,d\mu$ or $\eps \int |B|^2 \,d\mu$ is necessary to
  obtain the length bound.
  Let $M_\eps$ be a sequence of ``dumbbells'' that consist of two
  spheres, which are smoothly connected by a cylinder of length
  $l_\eps$ and diameter $h_\eps$, and let the phase field $u_\eps$ be
  $0$ on the cylinder and $+1$ and $-1$ on the spheres with exactly one
  transition with gradient of order $\eps^{-1}$ at each connection.
  Then $\calH_\eps \sim 0$ on the cylinder and $\calH_\eps$ is bounded
  independently of $\eps$ on the spheres. The contribution of $u_\eps
  \sim 0$ on the cylinder and of the two phase transitions stems from
  $\int \eps|\snabla[M_\eps]{u_\eps}|^2 + \frac{1}{\eps} W(u_\eps)
  \,d\mu_\eps$ and is of order $\frac{1}{\eps} l_\eps h_\eps +
  h_\eps$, and the smoothing of the connections between the cylinder
  and the spheres can be done where $u_\eps \sim 0$; see Section
  \ref{sec:proof:ub} for the details of the construction of a recovery
  sequence.
  Thus, if $l_\eps \to \infty$ and $h_\eps \to 0$ such that $l_\eps
  h_\eps \sim \eps$, the energy without $\eps \int |B|^2 \,d\mu$ is
  bounded, but the length of the generating curve is unbounded as
  $\eps\to0$; $(\gamma_\eps, u_\eps)$ can easily be made admissible
  for some $A_0$ and $m$, since the area and phase constraint, which
  are disturbed by the vanishing cylinder, can be recovered by
  slightly perturbing the spheres.
  On the other hand, $\eps \int |B|^2 \,d\mu \sim \eps
  l_\eps/h_\eps$ on the cylinder, and therefore $l_\eps \to
  \infty$ is excluded by a uniform bound on $\calF_\eps$.

  The scaling of $\eps$ in the stabilising term is critical. If the
  energy contains $\eps^p \int |B|^2 \,d\mu$ with $p>1$, the above
  example still works and there is no length bound. If $p<1$, tangent
  discontinuities in the limit are excluded, since an argument similar
  to the proof of Lemma~\ref{lem:model:variation-bound} yields an
  $L^q$-bound for some $q>1$ on the second fundamental form and thus
  on $\kappa_1$; compare the equi-coercivity arguments in
  Section~\ref{sec:proof:equi-coercivity}.
\end{remark}


\subsection{Limit setting}

The major technical difficulties in the limit of
\eqref{eq:model:eps-energy} as $\eps \to 0$ stem from the appearance of
kinks and from the axis of revolution. In particular, at the axis the
compactness result for sequences $(\gamma_\eps,u_\eps)$ and the
regularity properties of the limit are weaker than elsewhere.
Limit curves will have parametrisations in
\begin{equation*}
  \begin{split}
    \calD
    := \Big\{
    &\gamma = (x,y) \in C^{0,1}(I; \bbR^2)
    :
    \\
    &|\gamma'| \equiv q_\gamma = \text{const in} \set{y>0}, \;
    |\gamma'| = x' \leq q_\gamma \text{ in} \set{y=0},
    \\
    &y(\partial I) = \set{0}, \;
    y \geq 0, \;
    x' \geq 0
    \\
    &\text{there is } S_\gamma \subset \set{y>0}
    \text{ s.\,t. } \calH^1(M_\gamma(S_\gamma)) <
    \infty \text{ and} 
    \\
    &\int_{M_\gamma(\set{y>0} \sm S_\gamma)} |B|^2 \,d\mu < \infty, \;
    \calA_\gamma = A_0
    \Big\}.
  \end{split}
\end{equation*}
A curve $\gamma \in \calD$ is globally Lipschitz with Lipschitz
constant $q_\gamma$, and its restriction to $\set{y>0}$ is a constant
speed parametrisation. Since $\set{y>0} \subset \bbR$ is open, it is
the union of its countably many connected components, which are
disjoint intervals. In a slight abuse of language we refer to a
component $\omega$ of $\set{y>0}$ also as component of $\gamma$ and
call $M_\gamma(\omega)$ a component of $M_\gamma$. Thus, $M_\gamma$
consists of at most countably many components, which are connected
through the axis of revolution.

Due to $\calH^1(M_\gamma(S_\gamma)) < \infty$ the set $S_\gamma \cap
J$ is finite for any $J \Subset \set{y>0}$, and since $S$ can be
written as countable union of such sets it is countable.
The bound on the second fundamental form yields $\gamma \in W^{2,2}(J
\sm S_\gamma; \bbR^2)$. By embedding into $C^1(\overline{J}; \bbR^2)$
the tangent vector $\gamma'$ is continuous from either side at any $s
\in S_\gamma$, that is, $S_\gamma$ indeed contains the tangent
discontinuities of $\gamma$ in $\set{y>0}$.

In contrast to $\calC$, a component $M_\gamma(\omega)$ of $M_\gamma$,
$\gamma \in \calD$ is embedded only between adjacent kinks, but in
general not globally. Moreover, if kinks accumulate at $a
\in \partial\omega$, the limit of $\gamma'(t)$ as $t \to a$, $t \in
\omega$ need not exist; $\gamma'$ is perpendicular to the axis of
revolution in the following weak sense.

\begin{lemma}
  \label{lem:model:weak-continuity}
  Let $\omega = (a,b)$ be a component of $\gamma=(x,y) \in \calD$ and
  assume that $(s_j) \subset S_\gamma \cap \omega$ is a decreasing
  sequence such that $s_j \to a$ as $j \to \infty$ and $\gamma \in
  W^{2,2}(s_{j+1},s_j)$ for all $j \in \bbN$. Then the one-sided
  approximate limit of $x'$ vanishes at $a$, that is
  \begin{equation*}
    \lim_{\rho \searrow 0} \frac{1}{\rho} \int_a^{a+\rho} x' \,d t = 0,
  \end{equation*}
  and $|y'|$ has one-sided approximate limit $q_\gamma$.
  Moreover, $\gamma$ is almost piecewise straight near $a$ in the
  sense
  \begin{equation*}
    \lim_{j \to \infty} \osc_{(s_{j+1},s_j)} \gamma'
    =
    \lim_{j \to \infty} \sup_{t,s \in (s_{j+1},s_j)} |\gamma'(t) - \gamma'(s)|
    =
    0.
  \end{equation*}
\end{lemma}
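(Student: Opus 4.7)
The plan is to treat the two assertions separately, exploiting the angle-function representation from Section~\ref{sec:surf}. Since $\gamma' = q_\gamma(\cos\phi,\sin\phi)$ a.e.\ on $\omega$ (the pair $(\cos\phi,\sin\phi)$ being unambiguously determined by $\gamma'$), the curvature identities \eqref{eq:surf:curvatures-angle} together with $d\mu = q_\gamma y\,dt\,d\theta$ translate the bound on $\int_{M_\gamma(\omega\sm S_\gamma)}|B|^2\,d\mu$ into
\begin{equation*}
  \int_{\omega\sm S_\gamma} \left( (\phi')^2 y + \frac{\cos^2\phi}{y} \right) dt < \infty,
\end{equation*}
so by absolute continuity both summands integrated over $(a,a+\rho)$ tend to $0$ as $\rho \searrow 0$.

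For the approximate-limit claim, the key observation is the elementary one-sided bound $y(t) \leq q_\gamma(t-a) \leq q_\gamma\rho$ on $(a,a+\rho)$, which gives $1/y \geq 1/(q_\gamma\rho)$ pointwise. A Chebyshev-type argument then yields, for every $\delta > 0$,
\begin{equation*}
  \frac{|\{t \in (a,a+\rho) : |\cos\phi(t)| > \delta\}|}{\rho}
  \leq \frac{q_\gamma}{\delta^2} \int_a^{a+\rho} \frac{\cos^2\phi}{y}\,dt
  \longrightarrow 0 \quad \text{as } \rho \searrow 0,
\end{equation*}
so $\cos\phi$ has approximate one-sided limit $0$ at $a$. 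Since $x' = q_\gamma\cos\phi$ is bounded, this implies $\rho^{-1}\int_a^{a+\rho} x'\,dt \to 0$; and from $q_\gamma - |y'| = q_\gamma\bigl(1-\sqrt{1-\cos^2\phi}\bigr) \leq q_\gamma\cos^2\phi$, integration gives the approximate limit $q_\gamma$ for $|y'|$.

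For the oscillation bound on $\omega_j = (s_{j+1},s_j)$, the hypothesis $\gamma \in W^{2,2}(\omega_j)$ makes $\gamma' = q_\gamma(\cos\phi,\sin\phi)$ absolutely continuous. The trick is to control $\osc \sin\phi$ rather than $\osc\phi$: since $\tfrac{d}{dt}\sin\phi = \cos\phi\cdot\phi'$, Cauchy--Schwarz produces the critical weighted pairing
\begin{equation*}
  \osc_{\omega_j}\sin\phi
  \leq \int_{\omega_j} |\cos\phi \,\phi'|\,dt
  \leq \left(\int_{\omega_j}\frac{\cos^2\phi}{y}\,dt\right)^{\!1/2}
        \left(\int_{\omega_j}(\phi')^2 y\,dt\right)^{\!1/2}.
\end{equation*}
As the $\omega_j$ are pairwise disjoint subsets of $\omega\sm S_\gamma$, both factors vanish as $j\to\infty$. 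A brief case analysis based on $\cos^2\phi = 1 - \sin^2\phi$, distinguishing whether the continuous function $\cos\phi$ changes sign on $\omega_j$, upgrades $\osc\sin\phi\to 0$ to $\osc\cos\phi\to 0$, and hence $\osc\gamma' \to 0$.

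The main obstacle is precisely this second part: the naive estimate $\osc\phi \leq \int_{\omega_j}|\phi'|\,dt$ handled by Cauchy--Schwarz against the available $(\phi')^2 y$ would require a control on $\int_{\omega_j} y^{-1}\,dt$, which is unavailable because $y$ vanishes at both endpoints of $\omega_j$ as $j\to\infty$. Pairing the two integrable quantities $(\phi')^2 y$ and $\cos^2\phi/y$, with the weight $|\cos\phi|$ conveniently supplied by differentiating $\sin\phi$, is what sidesteps this obstruction.
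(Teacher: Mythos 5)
Your proposal is correct and takes essentially the same route as the paper's proof. The Chebyshev step for the first assertion is a variant of the paper's direct estimate $\rho^{-1}\int_a^{a+\rho}x'^2\,dt \leq q_\gamma^2(2\pi)^{-1}\int_{M_\gamma((a,a+\rho)\sm S_\gamma)}\kappa_2^2\,d\mu$, and your Cauchy--Schwarz pairing of $\cos^2\phi/y$ against $(\phi')^2 y$ to bound $\int|(\sin\phi)'|$ is the integral-form twin of the paper's pointwise inequality $|K|\leq|B|^2$ (indeed $y'' = q_\gamma(\sin\phi)' = -q_\gamma^2 K y$, so the two bounds control the same quantity); the final upgrade from $\osc\sin\phi$ to $\osc\cos\phi$ via $\cos^2\phi=1-\sin^2\phi$, which does not need a case analysis because $x'\geq0$ forces $\cos\phi\geq0$ throughout $\omega$, is exactly the paper's estimate $|x'(t)-x'(s)|^2\leq|y'^2(t)-y'^2(s)|$.
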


\begin{proof}
  Lipschitz continuity of $y$ implies $y(t) \leq y(a) + q_\gamma (t-a)
  \leq q_\gamma \rho$ in $(a,a+\rho)$, hence using
  \eqref{eq:surf:curvatures} we conclude
  \begin{equation*}
    \frac{1}{\rho} \int_a^{a+\rho} x'^2 \,d t
    \leq
    q_\gamma^2 \int_a^{a+\rho} \frac{x'^2}{q_\gamma y} \,d t
    \leq
    \frac{q_\gamma^2}{2\pi}
    \int_{M_\gamma((a,a+\rho) \sm S_\gamma)} \kappa_2^2 \,d\mu.
  \end{equation*}
  The right hand side tends to zero as $\rho \to 0$, because the
  $L^2$-norm of the second fundamental form of $M_\gamma(\omega \sm
  S_\gamma)$ is finite. The approximate limit $q_\gamma$ of $|y'|$
  follows from $y'^2 = q_\gamma^2 - x'^2$ almost everywhere in
  $\omega$.

  For the straightness recall \eqref{eq:surf:gauss-curv-integral} and
  $|B|^2 \geq |K|$ almost everywhere in $\omega$, thus
  \begin{equation*}
    \sum_{j=1}^\infty \int_{s_{j+1}}^{s_j} |y''| \,d t
    \leq
    \int_{M_\gamma(\omega \sm S_\gamma)} |B|^2 \,d\mu
    <
    \infty.
  \end{equation*}
  Consequently,
  \begin{equation*}
    \sup_{t,s \in (s_{j+1},s_j)} |y'(t)-y'(s)|
    \leq \int_{s_{j+1}}^{s_j} |y''| \,d t
    \to 0
    \qquad\text{as } j \to \infty,
  \end{equation*}
  and likewise for $x'$ due to $x'\geq0$ and
  \begin{equation*}
    |x'(t)-x'(s)|^2
    \leq
    |x'^2(t)-x'^2(s)|
    =
    |y'^2(t)-y'^2(s)|
    \leq
    2 q_\gamma |y'(t)-y'(s)|.
    \qedhere
  \end{equation*}
\end{proof}

According to Lemma \ref{lem:model:weak-continuity}, $\gamma$ consists
roughly of straight line segments when approaching the component
boundary, but the directions of these segments may vary as long as the
approximate limit is perpendicular to the axis of revolution.
If kinks do not accumulate near a component boundary, then, as for
$\gamma \in \calC$, the classical limit tangent exists and is
perpendicular to the axis of revolution.
An example for a curve in $\calD$ is given in Figure
\ref{fig:model:curve-in-d}.

\begin{figure}
  \centering
  \includegraphics[width=.9\textwidth]{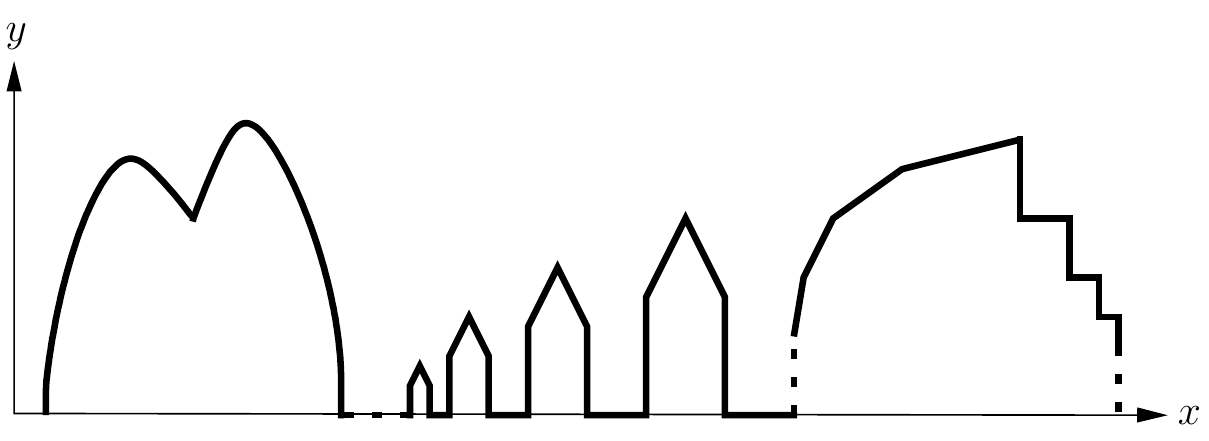}
  \caption{Example of a curve $\gamma \in \calD$. The component on the
    left is regular except for one kink. In the right component kinks
    accumulate at both ends, where at the left end the limit tangent
    exist, but at the right end it does not. In the centre there are
    countably many self-similar components $\omega_k$ decreasing from
    right to left such that one can easily find a scaling of
    $\gamma(\omega_k)$ and $\calL_\gamma(\omega_k)$ that leaves $B$
    bounded in $L^2$. }
  \label{fig:model:curve-in-d}
\end{figure}

To $\gamma \in \calD$ we associate a phase field $u$ in
\begin{equation*}
  \begin{split}
    \calQ
    := \Big\{
      &u \colon I \to [-C_0,C_0] :
      u \in \set{\pm 1} \text{ piecewise constant in} \set{y>0},
      \\
      &\int_{M_\gamma} u \,d\mu
      = m A_0,\; \calH^1(M_\gamma(S_u)) < \infty
    \Big\}.
  \end{split}
\end{equation*}
Here $S_u \subset \set{y>0}$ denotes the jump set of $u$, and we call
$s \in S_u$ and the corresponding circle $M_\gamma(\set{s})$ an
interface of $(\gamma,u)$.
The set $\calQ$ resembles the set of special functions of bounded
variation SBV with values in $\set{\pm1}$, weighted with the height
$y$ of the generating curve $\gamma = (x,y) \in \calD$. Indeed, for $u
\in \calQ$ and any $J \Subset \set{y>0}$ we have $u \in SBV(J;
\set{\pm1})$, but jumps of height $2$ may accumulate near the axis of
revolution and $u$ is not specified in $\set{y=0}$.
We emphasise that in our notation $S_u$ and $S_\gamma$ are subsets of
$\set{y>0}$, because kinks and interfaces on the axis of revolution do
not contribute to the limit energy defined below.
Moreover, kinks are not restricted to interfaces, that is, there may
be points $s \in S_\gamma \sm S_u$. We call such points {\em ghost
  interfaces}, as opposed to {\em proper interfaces}, since their
contribution to the limit energy is concentrated on lines as for
interfaces and contains the interface energy of the latter.

For $(\gamma,u) \in \calD \times \calQ$ we consider the energy
$\calF = \calH + \calI$ with Helfrich energy
\begin{equation*}
  \calH(\gamma,u)
  =
  \int_{M_\gamma(\set{y>0} \sm S_\gamma)} (H-H_{s}(u))^2 - K \,d\mu
\end{equation*}
and interface energy
\begin{equation*}
  \calI(\gamma,u)
  =
  2\pi \sum_{s \in S_\gamma \cup S_u} \left(
    \sigma + \hat\sigma |[\gamma'](s)|
  \right) y(s)
  +
  2\pi \hat\sigma \calL_\gamma(\set{y=0});
\end{equation*}
as before, $\calF(\cdot,\cdot,J)$, $\calH(\cdot,\cdot,J)$, and
$\calI(\cdot,\cdot,J)$ denote the restrictions to $J \subset I$.
Recall that $\sigma,\hat\sigma$ are given by
\eqref{eq:intro:sigma-sigma-hat} and that $|[\gamma'](s)|$ denotes the
modulus of the angle enclosed by the two one-sided tangent vectors at
$s$ modulo $2\pi$, that is, the jump of the tangent vector because its
length is fixed. The size of $\set{y=0}$ appears in $\calI$, because
it stems from the second fundamental form in $\calI_\eps$, and
$\set{y=0}$ might be interpreted as a (ghost) interface between
components of $M_\gamma$.
As for $\calF_\eps$, we find
\begin{equation*}
  \calH(\gamma,u)
  \geq
  \frac{1}{2} \int_{M_\gamma(\set{y>0} \sm S_\gamma)} |B|^2 \,d\mu
  - \|H_{s}\|_\infty^2 \calA_\gamma
\end{equation*}
and bounds corresponding to
\eqref{eq:model:modulus-energy-bound}--\eqref{eq:model:second-fform-bound}.
Moreover, also $\calF$ is invariant under reparametrisations that
preserve orientation and regularity properties.


\subsection{Convergence theorem}

We extend $\calF_\eps$ and $\calF$ to $C^0(I; \bbR^2) \times L^1(I)$
by setting $\calF_\eps(\gamma,u) = \calF(\gamma,u) = \infty$ whenever
$(\gamma,u)$ does not belong to $\calC \times \calP$ or
$\calD \times \calQ$, respectively.
The main result of this paper is the following theorem.

\begin{theorem}
  \label{thm:model:gamma-type-conv}
  The energies $\calF_\eps$ are equi-coercive, that is, any sequence
  $((\gamma_\eps,u_\eps)) \subset \calC \times \calP$ with uniformly
  bounded energy $\calF_\eps(\gamma_\eps,u_\eps)$ admits a subsequence
  that converges in $C^0(I; \bbR^2) \times L^1(\set{y>0})$ to some
  $(\gamma,u) \in \calD \times \calQ$.
  Furthermore, $\calF_\eps$ converges to $\calF$ in the following sense:
  \begin{itemize}
  \item any sequence $((\gamma_\eps,u_\eps)) \subset C^0(I;\bbR^2)
    \times L^1(I)$ that converges to $(\gamma,u)$ in $C^0(I; \bbR^2)
    \times L^1(\set{y>0})$ satisfies the lower bound inequality
    \begin{equation*}
      \liminf_{\eps \to 0} \calF_\eps(\gamma_\eps,u_\eps)
      \geq
      \calF(\gamma,u);
    \end{equation*}
  \item for any $(\gamma,u) \in \calD \times \calQ$ such that $\gamma$
    is parametrised with constant speed almost everywhere in $I$ there
    exists a recovery sequence $((\gamma_\eps,u_\eps)) \subset \calC
    \times \calP$ that converges to $(\gamma,u)$ in $C^0(I; \bbR^2)
    \times L^1(\set{y>0})$ and satisfies the upper bound inequality
    \begin{equation*}
      \limsup_{\eps \to 0} \calF_\eps(\gamma_\eps,u_\eps)
      \leq
      \calF(\gamma,u).
    \end{equation*}
  \end{itemize}
\end{theorem}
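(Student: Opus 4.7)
The theorem decomposes in the standard way into equi-coercivity, a liminf inequality, and a recovery sequence, and I would tackle them in that order so that compactness is in hand before the liminf. Throughout I work with the constant-speed parametrisation $q_\eps = \calL_{\gamma_\eps}/|I|$. For equi-coercivity, Lemmas \ref{lem:model:variation-bound} and \ref{lem:surf:length-bound} together bound $\calL_{\gamma_\eps}$, hence $q_\eps$, so the $\gamma_\eps$ are equi-Lipschitz and Arzel\`a--Ascoli yields $\gamma_\eps \to \gamma$ in $C^0(\overline I;\bbR^2)$. Estimate \eqref{eq:model:second-fform-bound} combined with $\|u_\eps\|_\infty \leq C_0$ puts $\gamma_\eps$ into $W^{2,2}_{\loc}$ on compact subsets of $\{y>0\}$ away from the forming jump set, and the Modica--Mortola part of $\calI_\eps$ bounds $G(u_\eps):=\int_0^{u_\eps}\sqrt{W(s)}\,ds$ in $BV_{\loc}(\{y>0\})$, yielding $L^1_{\loc}$ convergence to some $u\in\{\pm1\}$ a.e. Verifying the remaining conditions for $(\gamma,u)\in\calD\times\calQ$ follows from continuity of area, the phase integral, and lower semicontinuity of the jump mass.

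For the lower bound I would localise on $I$ in three regimes: (i) compact subsets of $\{y>0\}\setminus(S_\gamma\cup S_u)$, (ii) arbitrarily small neighbourhoods of each $s\in S_\gamma\cup S_u$, and (iii) neighbourhoods of $\{y=0\}$. On (i), $u_\eps^2\to1$ locally uniformly and standard lower semicontinuity under $\gamma_\eps\wto\gamma$ in $W^{2,2}_{\loc}$ delivers the Helfrich piece. Region (ii) is the crux: the bulk bound on $\int u_\eps^2 H_\eps^2\,d\mu$ forces $u_\eps$ to dip toward $0$ on the high-curvature set; one then splits $W(u) = W_1(u) + W_2(u)$ with $W_1$ supported on $\{|u|\geq\delta\}$ and $W_2$ on $\{|u|\leq\delta\}$, applies Modica--Mortola to $\eps|\snabla[M_{\gamma_\eps}]u_\eps|^2 + W_1(u_\eps)/\eps$ to obtain the $2\pi\sigma y(s)$ term as $\delta\to0$, and applies the AM--GM inequality
\begin{equation*}
\eps|B|^2 + \tfrac{1}{\eps} W_2(u_\eps) \geq 2\sqrt{W_2(u_\eps)}\,|B|
\end{equation*}
together with $|B|\geq|\kappa_1|=|\phi_\eps'|/q_\eps$ and $d\mu = q_\eps y_\eps\,dt\,d\theta$ to obtain $4\pi\int\sqrt{W_2(u_\eps)}\,|\phi_\eps'|\,y_\eps\,dt \to 4\pi\sqrt{W(0)}|[\gamma'](s)|y(s) = 2\pi\hat\sigma|[\gamma'](s)|y(s)$. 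The split of $W$ prevents double-counting of its mass. For (iii), the same AM--GM coupling applied on a thin tube near the axis, together with optimisation over the tube radius, yields $2\pi\hat\sigma\calL_\gamma(\{y=0\})$.

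For the upper bound, given $(\gamma,u)\in\calD\times\calQ$ with constant-speed parametrisation, I would paste local constructions: $(\gamma_\eps,u_\eps)=(\gamma,u)$ away from the singular set; at each $s\in S_\gamma\cup S_u$, round the corner of $\gamma$ into a short arc of length $\ell_\eps \sim \eps|[\gamma'](s)|/\sqrt{W(0)}$ (if there is an angle jump) and let $u_\eps$ dip through $0$ with the Modica--Mortola optimal profile on either side of the arc; the MM dip contributes $2\pi\sigma y(s)$ and the arc contributes $2\pi\hat\sigma|[\gamma'](s)|y(s)$ via the balance between $\eps\int|B|^2 \sim 2\pi\eps y(s)|[\gamma'](s)|^2/\ell_\eps$ and $\int W(0)/\eps \sim 2\pi y(s) W(0)\ell_\eps/\eps$. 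On an axis segment of length $L$, replace $\gamma$ by a thin tube of radius $\eps/\sqrt{W(0)}$ with $u_\eps \equiv 0$, recovering $2\pi\hat\sigma L$. Area and phase-integral constraints are restored by $o(1)$-perturbations on a regular region.

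The main obstacle is the lower bound on regions (ii) and (iii): one must make the splitting $W = W_1 + W_2$ and its $\delta\to0$ passage rigorous without losing mass, simultaneously extracting the Modica--Mortola and kink terms from one combined energy, and handle possible accumulation of kinks, which by Lemma \ref{lem:model:weak-continuity} can occur near component boundaries (Figure~\ref{fig:model:curve-in-d}) and requires a covering plus diagonal argument so that the sum of local errors stays $o(1)$.
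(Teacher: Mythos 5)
Your overall strategy is the same as the paper's (equi-coercivity via the length and curvature bounds, lower bound via a $\delta$-splitting plus Young's inequality and Modica--Mortola, upper bound via local corner-rounding with the balance $\ell_\eps \sim \eps |[\gamma']|/\sqrt{W(0)}$ and a thin tube of radius $2\eps/\hat\sigma = \eps/\sqrt{W(0)}$ on the axis). Two genuine gaps remain.

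First, the lower bound at a kink is not closed. After applying the AM--GM inequality on $\{|u_\eps|\le\delta\}$ you assert that $\int_{\{|u_\eps|\le\delta\}} \sqrt{W_2(u_\eps)}\,|\phi_\eps'|\,y_\eps\,dt$ converges to $\sqrt{W(0)}\,|[\gamma'](s)|\,y(s)$, but you give no reason why the angle jump must be captured inside $\{|u_\eps|\le\delta\}$ rather than leaking into the complement, where you have discarded the $\eps|B_\eps|^2$ term. This is exactly what Lemma~\ref{lem:proof:kink-lb-1} proves: writing $w_\eps = \phi_\eps' y_\eps + \kappa_1 y$ it shows $\int_{J\setminus J_{\eps,\delta}} w_\eps \to 0$ by combining weak $W^{2,2}$-convergence away from $s$ with a H\"older estimate on the thin strip near $s$ using the uniform $L^2$-bound on $|B_\eps|$ coming from $|u_\eps|>\delta$. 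Without this concentration argument the kink term can be lost, so the step as written would fail. (Incidentally, ``optimisation over the tube radius'' has no role in the lower bound on the axis; there the radius is given and one only estimates, using $|B_\eps|\ge|\kappa_{2,\eps}|=x_\eps'/(q_\eps y_\eps)$ and $x_\eps'\wsto x'$.)

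Second, the upper bound is stated as a direct pasting of local constructions near each element of $S_\gamma\cup S_u$. This does not work as stated because $\calD\times\calQ$ admits infinitely many components and kinks that accumulate at component boundaries (Figure~\ref{fig:model:curve-in-d}), so the local corrections and their $x$-shifts and constraint perturbations cannot simply be summed. The paper first approximates $(\gamma,u)$ in energy by ``simple'' membranes with finitely many components and finitely many (ghost) interfaces, with normalized vertical ends and small horizontals between components (Lemmas~\ref{lem:proof:approx-noc-finite} and~\ref{lem:proof:approx-noi-finite}), constructs a recovery sequence for each simple membrane, and only then takes a diagonal sequence. You mention a ``covering plus diagonal argument,'' but place it in the lower-bound discussion, where it is not needed (one estimates from below on finite subsets of $S_\gamma\cup S_u$ and takes a supremum); the missing preprocessing step belongs to the upper bound and is essential there.
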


Theorem \ref{thm:model:gamma-type-conv} differs from
$\Gamma$-convergence in two aspects. First, the underlying convergence
of the phase fields $u_\eps$ in $L^1(\set{y>0})$ depends on the limit
curve $\gamma=(x,y)$, because there is in general insufficient control
on $u_\eps$ in $\set{y=0}$. Of course, due to $\|u_\eps\|_{\infty}\leq
C_0$ we could extract a weakly-$\star$ convergent subsequence, but
since the $L^\infty$-bound is artificial and the value of the limit
$u$ in $\set{y=0}$ is not used by $\calF$, we prefer the above
setting, where the limit phase field is essentially undefined in
$\set{y=0}$.
Second, in the upper bound inequality we construct a recovery sequence
only for limits $(\gamma,u)$ with constant speed $|\gamma'|$ in all of
$I$. Nevertheless, as for $\Gamma$-convergence it is true that almost
minimising sequences for $\calF_\eps$ cluster only in minimisers of
$\calF$.

\begin{corollary}
  Let $(\gamma_\eps,u_\eps) \in \calC \times \calP$ converge to
  $(\gamma,u) \in \calD \times \calQ$ in $C^0(I; \bbR^2) \times
  L^1(\set{y>0})$ such that $\calF_\eps(\gamma_\eps,u_\eps) = \inf
  \calF_\eps + o(1)_{\eps\to0}$.
  Then $(\gamma,u)$ minimises $\calF$ in $\calD \times \calQ$.
\end{corollary}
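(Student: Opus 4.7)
My plan is to follow the standard corollary argument from $\Gamma$-convergence, using the lower bound inequality on the sequence itself and the upper bound inequality (recovery sequence) on an arbitrary competitor. The only non-trivial point is reconciling the mismatch between $\calD \times \calQ$, the natural domain of $\calF$, and the slightly smaller class on which the recovery sequence is stated — namely pairs with $|\gamma'|$ constant a.e.\ in $I$.

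First, I would apply the lower bound part of Theorem~\ref{thm:model:gamma-type-conv}. Since $(\gamma_\eps,u_\eps) \to (\gamma,u)$ in $C^0(I;\bbR^2) \times L^1(\set{y>0})$, this yields
\begin{equation*}
  \calF(\gamma,u)
  \leq
  \liminf_{\eps \to 0} \calF_\eps(\gamma_\eps,u_\eps)
  =
  \liminf_{\eps \to 0} \inf \calF_\eps,
\end{equation*}
where the equality uses the almost-minimising hypothesis $\calF_\eps(\gamma_\eps,u_\eps) = \inf \calF_\eps + o(1)$.

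Next, fix an arbitrary competitor $(\tilde\gamma,\tilde u) \in \calD \times \calQ$. The key preparatory step is to replace it by an energetically equivalent pair $(\tilde\gamma^*,\tilde u^*)$ whose curve is parametrised with constant speed almost everywhere in $I$. To do so, I would remove constancy intervals of $\tilde\gamma$ (which can only occur where $y=0$, since on $\set{y>0}$ the speed is already $q_{\tilde\gamma}>0$) and then rescale the parametrisation of the axis parts so that $|\tilde\gamma^{*\prime}|$ agrees with $q_{\tilde\gamma}$ almost everywhere; this preserves orientation, the $\calD \times \calQ$ structure, and by the reparametrisation invariance of $\calF$ stated after the definition of the limit energy it does not change the energy, so $\calF(\tilde\gamma^*,\tilde u^*) = \calF(\tilde\gamma,\tilde u)$. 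With constant speed a.e.\ in hand, the upper bound inequality produces a recovery sequence $(\tilde\gamma_\eps,\tilde u_\eps) \subset \calC \times \calP$ with
\begin{equation*}
  \limsup_{\eps\to0} \calF_\eps(\tilde\gamma_\eps,\tilde u_\eps)
  \leq
  \calF(\tilde\gamma^*,\tilde u^*)
  =
  \calF(\tilde\gamma,\tilde u).
\end{equation*}

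Chaining the two inequalities and using $\calF_\eps(\tilde\gamma_\eps,\tilde u_\eps) \geq \inf \calF_\eps$ gives
\begin{equation*}
  \calF(\gamma,u)
  \leq
  \liminf_{\eps\to0} \inf \calF_\eps
  \leq
  \liminf_{\eps\to0} \calF_\eps(\tilde\gamma_\eps,\tilde u_\eps)
  \leq
  \limsup_{\eps\to0} \calF_\eps(\tilde\gamma_\eps,\tilde u_\eps)
  \leq
  \calF(\tilde\gamma,\tilde u).
\end{equation*}
Since $(\tilde\gamma,\tilde u) \in \calD \times \calQ$ was arbitrary, $(\gamma,u)$ is a minimiser. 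The only genuine obstacle is verifying that the reparametrisation to globally constant speed can be carried out within $\calD \times \calQ$ without altering the energy — this is a geometric bookkeeping step, but it is exactly the condition under which the paper asserts invariance of $\calF$, so no additional machinery is needed.
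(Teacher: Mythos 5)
Your proposal is correct and follows essentially the same route as the paper: apply the lower bound to the almost-minimising sequence, reparametrise an arbitrary competitor in $\calD\times\calQ$ to constant speed using the reparametrisation invariance of $\calF$, invoke the upper bound for the reparametrised competitor, and chain the inequalities. The paper carries out the reparametrisation via an explicit arc-length map $\psi$ after removing constancy intervals, which is the precise version of the "geometric bookkeeping step" you flag but leave informal.
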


\begin{proof}
  Given an arbitrary $(\widetilde \eta, \widetilde w) \in \calD \times
  \calQ$, $\widetilde\eta = (x_{\widetilde\eta}, y_{\widetilde\eta})$,
  a constant speed parametrisation of the membrane represented by
  $(\widetilde \eta, \widetilde w)$ is found as in Section \ref{sec:surf}
  and the first remark in Section \ref{sec:approximate-setting}:
  First, removing constancy intervals of $\widetilde \eta$ in
  $\set{\widetilde y=0}$ does not change the membrane, its area,
  energy $\calF$, or phase integral. Then, if $\widetilde \eta$ has no
  constancy intervals and $I=(a,b)$, the function
  \begin{equation*}
    \psi(t)
    =
    a + \frac{b-a}{\calL_{\widetilde \eta}}
    \int_a^t |\widetilde\eta'(s)| \,d s
  \end{equation*}
  is strictly increasing, and the parametrisation $(\eta,w) =
  (\widetilde \eta \circ \psi^{-1}, \widetilde w \circ \psi^{-1})$,
  $\eta = (x_\eta, y_\eta)$ has constant speed $\calL_{\widetilde
    \eta}/(b-a)$. Since $\psi$ is affine in each component of
  $\widetilde\eta$, the pair $(\eta,w)$ inherits its differentiability
  properties and bounds in $\set{y_\eta>0}$ from $(\widetilde
  \eta,\widetilde w)$ in $\set{y_{\widetilde\eta}>0}$.  Again, energy,
  area and phase integral are unchanged.

  With a recovery sequence $(\eta_\eps, w_\eps)$ for $(\eta,w)$ we now
  obtain
  \begin{equation*}
    \calF(\gamma,u)
    \leq
    \liminf_{\eps \to 0} \calF_\eps(\gamma_\eps,u_\eps)
    =
    \liminf_{\eps \to 0} \left( \inf \calF_\eps \right)
    \leq
    \limsup_{\eps \to 0} \calF_\eps(\eta_\eps, w_\eps)
    \leq
    \calF(\eta, w)
    =
    \calF(\widetilde \eta, \widetilde w),
  \end{equation*}
  and by arbitrariness of $(\widetilde \eta, \widetilde w)$ we
  conclude that $(\gamma,u)$ has minimal energy $\calF$.
\end{proof}

The relation between $\calF$ and $\Gamma$-$\lim \calF_\eps$ is
discussed further in Section \ref{sec:some-generalisations}.


\subsection{Numerical examples}
\label{sec:numerical-examples}

Since it is defined for relatively smooth membranes, $\calF_\eps$ is
better suited for numerical simulation than $\calF$ and can be
compared to an approximation of a limit without kinks given by
\begin{equation}
  \label{eq:model:simple-approx-energy}
  \calE_\eps(\gamma,u)
  =
  \int_{M_\gamma} (H-H_s(u))^2 - K \,d\mu
  +
  \int_{M_\gamma} \eps |\snabla[M_\gamma]{u}|^2 + \frac{1}{\eps} W(u) \,d\mu
\end{equation}
for $(\gamma,u) \in \calC \times \calP$. The energy
\eqref{eq:model:simple-approx-energy} has been studied in numerical
simulations and by means of formal asymptotic expansion for arbitrary
smooth surfaces, for instance in \cite{ElSt10_Modeling,ElSt10}. For
rotationally symmetric membranes the $\Gamma$-limit of
\eqref{eq:model:simple-approx-energy} is given by
\begin{equation}
  \label{eq:model:simple-limit-energy-special}
  \calE(\gamma,u)
  =
  \int_{M_\gamma} (H-H_s(u))^2 - K \,d\mu + \sigma \calH^1(M_\gamma(S_u))
\end{equation}
on membranes $(\gamma,u)$ such that $M_\gamma$ is a topological
sphere \cite{Helmers12}.

For numerical illustrations we consider a gradient flow type evolution
for $\calE_\eps$ and $\calF_\eps$ that consists of an $L^2$ flow for
the surface and a weighted $L^2$ flow for the phase field; the
constraints are incorporated by Lagrange multipliers. This flow has to
our knowledge first been studied in \cite{ElSt10_Modeling,ElSt10},
where the derivation of the flow equations is presented in full
detail. The numerical results below were obtained by incorporating the
phase field into the scheme for rotational symmetric surfaces flows
from \cite{MaSi02}.

\begin{figure}[t]
  \centering
  \includegraphics{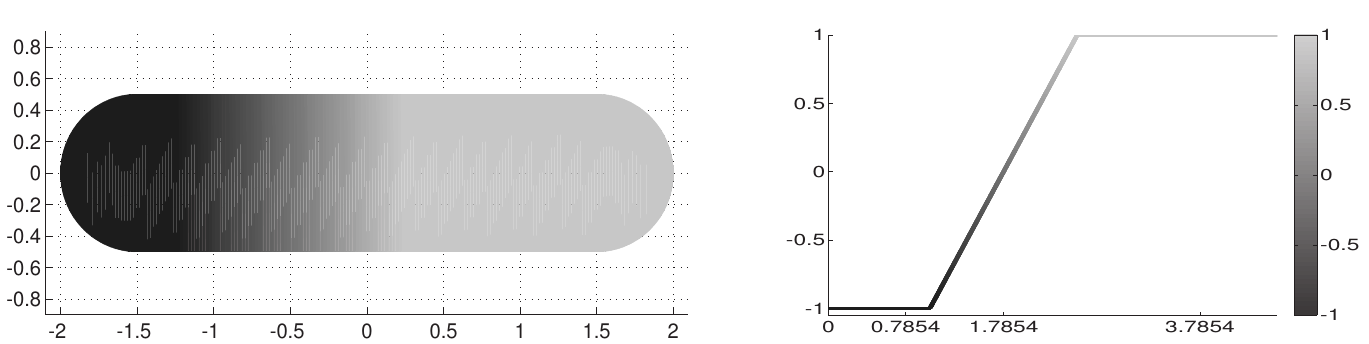}
  \caption{Initial data for the numerical examples in Section
    \ref{sec:numerical-examples}, cross section of the surface on the
    left, phase field over arc length on the right. The marks on the
    horizontal axis indicate the interface and the connection of
    spherical caps and cylinder.}
  \label{fig:initial-data}
\end{figure}

The initial data for the simulations below is shown in Figure
\ref{fig:initial-data}. The surface is a cylinder of length $3$ and
radius $1/2$ with spherical caps; it is centred in the origin so that
the $x$-coordinate ranges from $-2$ to $2$. The initial phase field is
\begin{equation*}
  u(x,y) =
  \begin{cases}
    -1                          &\text{if } x \leq -\frac{5}{4}, \\
    \frac{4}{3} x + \frac{2}{3} &\text{if } -\frac{5}{4} < x < \frac{1}{4}, \\
    +1                          &\text{if } \frac{1}{4} \leq x.
  \end{cases}
\end{equation*}
The spontaneous curvature $H_s(u)$ is the fifth-order polynomial
interpolation of $H_s(1)=2$, $H_s(-1)=1$, $H_s'(\pm1) = H_s''(\pm1) =
0$ in $[-1,1]$ and extended constantly to the whole real line.

Figure \ref{fig:kink} shows the numerically stationary membranes and
the angle between their generating curves and the positive $x$-axis for
$\calE_\eps$ and $\calF_\eps$ with $\eps=0.05$.
Obviously, while there is a smooth and rather ample neck region for
$\calE_\eps$, the curve for $\calF_\eps$ makes a sharp turn: the angle
almost jumps from about $-0.5$ to $1.05$, and the neck region is
limited to a small neighbourhood of the approximate kink, which
compares well with the experimental observations in \cite{BaHeWe03}.
Also, the light phase $u_\eps=1$ of the membrane is closer to a round
sphere for $\calF_\eps$ than for $\calE_\eps$.

\begin{figure}[ht]
  \centering
  \includegraphics[width=\textwidth]{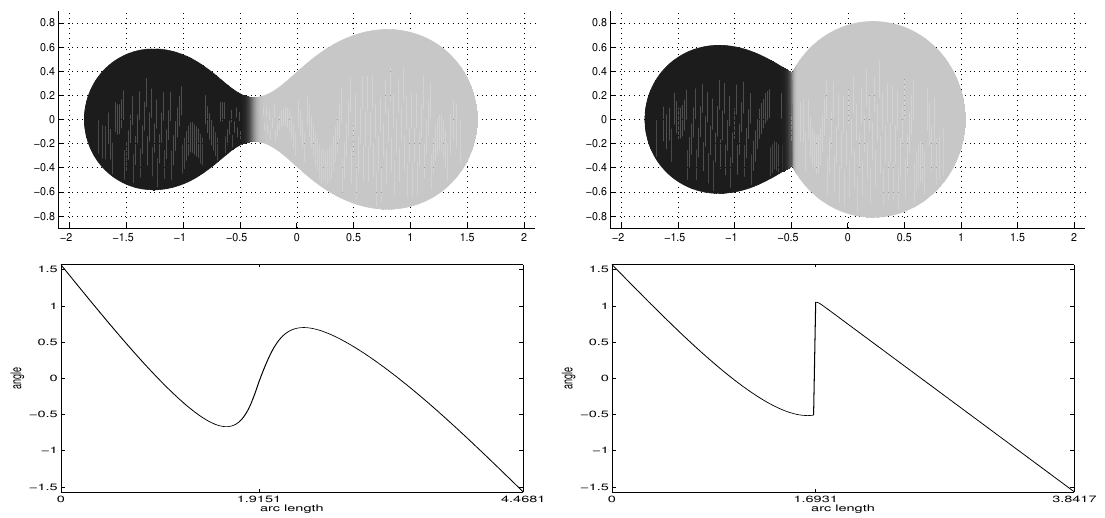}
  \caption{Numerically stationary shapes and angle between the
    generating curves and the positive $x$-axis over arc length for
    $\calE_\eps$ on the left and $\calF_\eps$ on the right.}
  \label{fig:kink}
\end{figure}

A different behaviour can be seen in Figure \ref{fig:no-kink-surface},
which shows the numerically stationary membrane for the energy
$\widetilde\calF_\eps$ that differs from $\calF_\eps$ in that no Gauss
curvature is present; we will discuss in Section
\ref{sec:gauss-curvature-axis} that our theorem can be adapted to this
case. The stationary shape for the corresponding energy
$\widetilde\calE_\eps$ is the same as for $\calE_\eps$ in Figure
\ref{fig:kink}, because the Gauss curvature integral in $\calE_\eps$
is a topological invariant.
One can see that the neck for $\widetilde\calF_\eps$ has smaller
diameter than for $\widetilde\calE_\eps$, there is, however, no kink,
and the neck region is as ample as for $\widetilde \calE_\eps$.

\begin{figure}
  \centering
  \includegraphics[width=.46\textwidth]{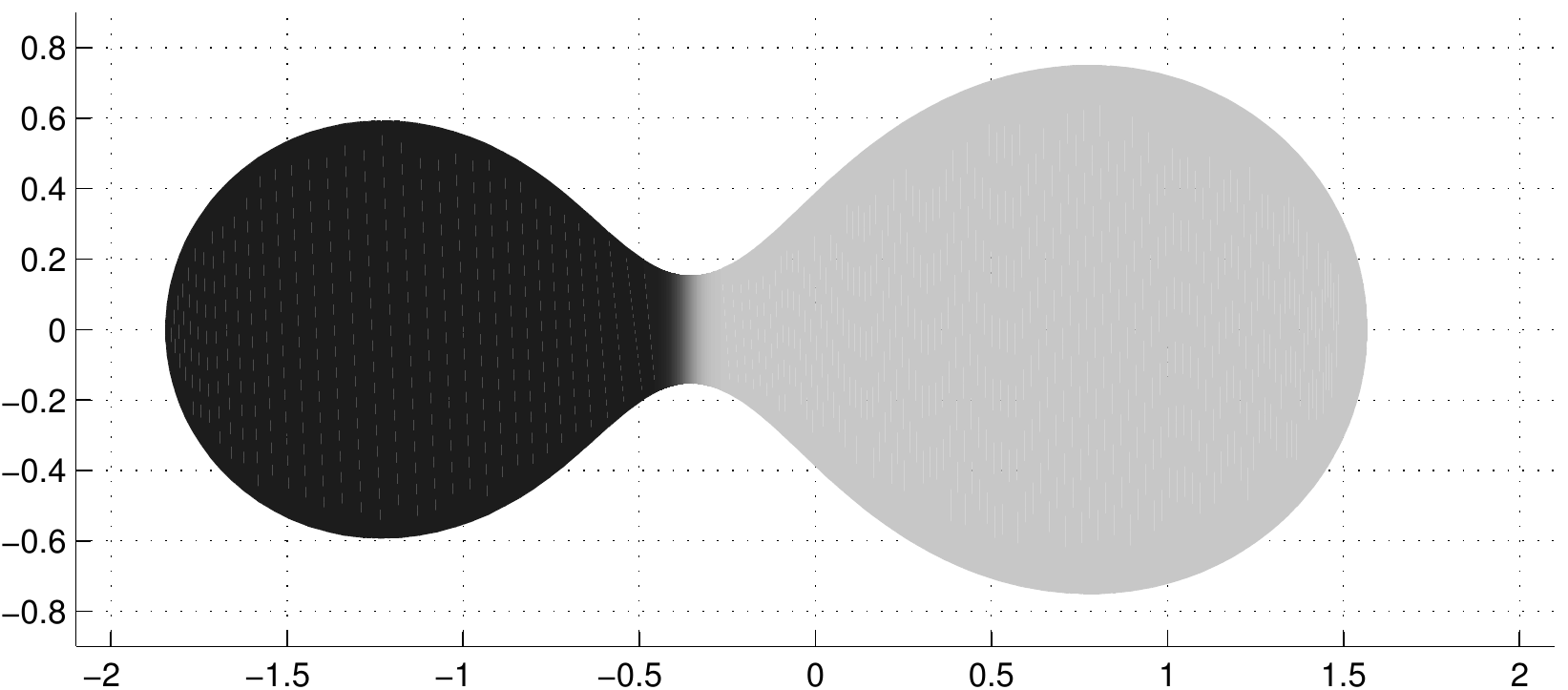}
  \caption{Numerically stationary shape for $\widetilde\calF_\eps$.}
  \label{fig:no-kink-surface}
\end{figure}


\section{Proof of Theorem \ref{thm:model:gamma-type-conv}}
\label{sec:proof}

The proof of Theorem \ref{thm:model:gamma-type-conv} follows the ideas
of \cite{Helmers11}, where we studied a one-dimensional analogue of
two-phase membranes, and is split into the three steps
equi-coercivity, lower bound, and upper bound inequality.  In the
following we write $M_\eps$ instead of $M_{\gamma_\eps}$ and so forth
when considering sequences of membranes. If convenient for
clarification, we also add an index $\gamma$ or $\eps$ to other
quantities such as $\mu$, $H$, and so on.


\subsection{Equi-coercivity}
\label{sec:proof:equi-coercivity}

\begin{lemma}
  \label{lem:proof:equi-coercivity}
  Let $((\gamma_\eps,u_\eps)) \subset \calC \times \calP$ be a
  sequence with uniformly bounded energy
  $\calF_\eps(\gamma_\eps,u_\eps)$.
  Then there exist $(\gamma,u) \in \calD \times \calQ$, $\gamma =
  (x,y)$, a countable set $S \subset \set{y>0}$ with $S_\gamma \cup
  S_u \subset S$ and $S \cap J$ finite for any $J \Subset \set{y>0}$,
  and a subsequence, not relabelled, such that
  \begin{itemize}
  \item $\gamma_\eps \wsto \gamma$ in $W^{1,\infty}(I; \bbR^2)$;
  \item $u_\eps \to u$ in $L^p(\set{y>0})$ for any $p \in [1,\infty)$;
  \item $\gamma_\eps \wto \gamma$ in $W^{2,2}_{\loc}(\set{y>0} \sm S;
    \bbR^2)$;
  \item in any $J \Subset \set{y>0} \sm S$ there holds $|u_\eps| \geq
    1/2$ for all sufficiently small $\eps$.
  \end{itemize}
\end{lemma}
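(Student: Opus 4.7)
The strategy is to combine the three non-negative pieces of $\calF_\eps$—the weighted curvature $u_\eps^2|B|^2$, the stabilising term $\eps|B|^2$, and the Modica--Mortola integrand—into auxiliary Radon measures on $I$, and to identify the singular set $S$ with their atoms.

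First, Lemmas \ref{lem:model:variation-bound} and \ref{lem:surf:length-bound} together with the energy bound give $\calL_{\gamma_\eps}\le C$, so $q_\eps=|\gamma_\eps'|=\calL_{\gamma_\eps}/|I|$ is uniformly bounded and $\gamma_\eps$ is uniformly Lipschitz. Arzelà--Ascoli plus weak-$*$ compactness of $\gamma_\eps'$ in $L^\infty$ yield, along a subsequence, $\gamma_\eps\wsto\gamma$ in $W^{1,\infty}$ with a Lipschitz $\gamma=(x,y)$ satisfying $x'\ge0$, $y\ge0$, $y(\partial I)=\{0\}$, and $|\gamma'|\le q_\gamma:=\lim q_\eps$ a.e. Setting $\Psi(s)=\int_0^s 2\sqrt{W(\tau)}\,d\tau$ and $v_\eps=\Psi(u_\eps)$, Young's inequality gives
$$2\pi\int_I y_\eps\,|v_\eps'|\,dt\le \calI_\eps(\gamma_\eps,u_\eps)\le C.$$
Since $\gamma_\eps\to\gamma$ uniformly, $y_\eps\ge c>0$ on every $J\Subset\{y>0\}$ for small $\eps$, so $v_\eps$ is uniformly bounded in $BV(J)$. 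A diagonal BV-compactness argument combined with $\|u_\eps\|_\infty\le C_0$ yields a subsequence with $u_\eps\to u$ in $L^p(\{y>0\})$ for every $p<\infty$, $u\in\{\pm1\}$ a.e.; the jump set $S_u$ is then countable and locally finite in $\{y>0\}$.

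Next, introduce on $I$ the nonnegative Radon measures obtained by integrating the energy densities over $\theta\in[0,2\pi)$:
\begin{align*}
\lambda_\eps^1 &= 2\pi\, u_\eps^2\bigl(\kappa_{1,\eps}^2+\kappa_{2,\eps}^2\bigr) y_\eps q_\eps\,dt, \\
\lambda_\eps^2 &= 2\pi\,\eps\bigl(\kappa_{1,\eps}^2+\kappa_{2,\eps}^2\bigr) y_\eps q_\eps\,dt, \\
\lambda_\eps^3 &= 2\pi\bigl(\eps|\snabla[M_\eps]{u_\eps}|^2+\tfrac{1}{\eps} W(u_\eps)\bigr) y_\eps q_\eps\,dt.
\end{align*}
By \eqref{eq:model:phase-energy-bound} and \eqref{eq:model:second-fform-bound} each has uniformly bounded mass, so after a further subsequence $\lambda_\eps^1+\lambda_\eps^2+\lambda_\eps^3\wsto\lambda$ weakly-$*$. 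Define $S=\{s\in\{y>0\}:\lambda(\{s\})>0\}\cup S_u$, which is countable and locally finite in $\{y>0\}$. On any $J\Subset\{y>0\}\setminus S$, cover $\overline J$ by finitely many open intervals on each of which the mass of $\lambda$ is arbitrarily small. A local Modica--Mortola argument shows that on such an interval $u_\eps$ cannot make a full $\pm1\to\mp1$ transition for small $\eps$, since that would deposit at least $2\pi\sigma\inf_J y>0$ into $\lambda_\eps^3$; combined with $u_\eps\to\pm1$ this forces $|u_\eps|\ge\tfrac12$ on $J$ for all small $\eps$. Then $u_\eps^2|B_\eps|^2\ge\tfrac14|B_\eps|^2$ on $M_\eps(J)$, and the bound on $\lambda_\eps^1(J)$ together with $\inf_J y_\eps\ge c>0$ produces uniform $L^2(J)$-bounds on $\kappa_{1,\eps},\kappa_{2,\eps}$, hence via \eqref{eq:surf:curvatures-angle} on $\gamma_\eps''$. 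A further diagonal subsequence gives $\gamma_\eps\wto\gamma$ in $W^{2,2}_{\loc}(\{y>0\}\setminus S;\bbR^2)$; this implies $C^1$-convergence there and therefore $S_\gamma\subset S$.

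Finally, I verify $(\gamma,u)\in\calD\times\calQ$: lower semicontinuity of the $W^{2,2}$-norm gives $\int|B|^2\,d\mu<\infty$ on $M_\gamma(\{y>0\}\setminus S_\gamma)$; the area functional passes to the limit by dominated convergence (with $y_\eps\to0$ uniformly on $\{y=0\}$) and $C^1$-convergence away from $S$; $|\gamma'|=q_\gamma$ on $\{y>0\}$ follows from this $C^1$-convergence; and the phase integral constraint and $x'\ge0$ pass to the limit directly. The main obstacle is the joint construction of $S$: it has to simultaneously capture the points where curvature concentrates (needed for $W^{2,2}$-convergence) and the points where $u_\eps$ transitions (needed for the pointwise $|u_\eps|\ge 1/2$ claim). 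Both are handled because every transition of $u_\eps$ occurs on a circle of definite radius $2\pi y_\eps$ and therefore contributes a definite mass to $\lambda_\eps^3$, which must appear as an atom of $\lambda$.
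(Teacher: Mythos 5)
Your strategy is the same as the paper's: get $W^{1,\infty}$ compactness from the length bound, apply one-dimensional Modica--Mortola compactness locally away from the axis, pass to a diagonal subsequence, and then upgrade to weak $W^{2,2}$ convergence away from a locally finite singular set $S$ using the weighted $L^2$-bound on $u_\eps^2|B_\eps|^2$. The repackaging of the Modica--Mortola part via $v_\eps=\Psi(u_\eps)$ and $BV$-compactness is exactly what the cited result in Braides does, and the measure-theoretic definition of $S$ through atoms of energy densities is a fine variant of the paper's exhaustion $S=\bigcup_k S_{J_k}$.

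There is, however, one unjustified step: the assertion that $S=\{s\in\{y>0\}:\lambda(\{s\})>0\}\cup S_u$ is \emph{locally finite} in $\{y>0\}$. Your $\lambda$ contains $\lambda^1_\eps=2\pi u_\eps^2|B_\eps|^2 y_\eps q_\eps\,dt$ and $\lambda^2_\eps=2\pi\eps|B_\eps|^2 y_\eps q_\eps\,dt$, and these can concentrate into atoms of arbitrarily small mass even in regions where $|u_\eps|\equiv 1$ and the limit curve is perfectly smooth (think of curvature spikes of height $\sim\eps^{-1/2}$ and shrinking width at a countable set of points accumulating inside $\{y>0\}$: this keeps $\int|B_\eps|^2\,dt$ bounded while producing infinitely many atoms in a compact interval). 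In contrast, the atoms of $\lambda^3$ that matter — those at which $u_\eps$ dips below $1/2$ infinitely often — do carry mass bounded below by $2\pi c_J\bigl(\Psi(1)-\Psi(1/2)\bigr)$ on each $J\Subset\{y>0\}$, which is what makes $S$ locally finite in the paper's construction. The fix is simply to drop $\lambda^1,\lambda^2$ from the definition of $S$ (or, as the paper does, to build $S$ directly from the finitely many Modica--Mortola transition points in each $J_k$): the $W^{2,2}_\loc$ compactness does not require removing curvature-concentration points — it only needs $|u_\eps|\ge 1/2$ and $y_\eps\ge c_J$, which are controlled entirely by $\lambda^3$. Relatedly, the phrase ``cannot make a full $\pm1\to\mp1$ transition'' is slightly too weak: what you need to exclude is any dip of $|u_\eps|$ below $1/2$ near a point of $J$, which already forces an atom of $\lambda^3$ of definite size via the variation of $\Psi(u_\eps)$; the argument is the same but should be stated in those terms.
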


\begin{proof}
  Let $\gamma_\eps = (x_\eps,y_\eps)$ and $|\gamma_\eps'| =
  q_\eps$.
  With Lemma \ref{lem:model:variation-bound}, Lemma
  \ref{lem:surf:length-bound}, and H{\"o}lder's inequality we find
  \begin{equation*}
    2 \pi q_\eps |I|
    =
    2 \pi \calL_\eps
    \leq
    \left( \calA_\eps \int_{M_\eps} H^2 \,d\mu \right)^{1/2},
  \end{equation*}
  thus the sequence $(q_\eps)$ is uniformly bounded from above.
  Since translations in $x$-direction do not change the energy, we may
  assume that all $\gamma_\eps$ have a common end point. Hence,
  $(\gamma_\eps)$ is bounded in $W^{1,\infty}(I; \bbR^2)$ and we may
  extract a subsequence such that $q_\eps \to q$ and $\gamma_\eps
  \wsto \gamma = (x,y)$ in $W^{1,\infty}(I;\bbR^2) = C^{0,1}(I;\bbR)$.
  In particular, $y \geq 0$, $y(\partial I) = \set{0}$, and the
  convergence of $(\gamma_\eps)$ is uniform in $\overline{I}$. Since
  the set of non-negative functions is closed under weak-$\star$
  convergence in $L^\infty(I)$, $\gamma$ satisfies $x'\geq0$.
  From
  \begin{equation*}
    A_0 = \calA_\eps = 2\pi q_\eps \int_I y_\eps \,d t
    \to
    2\pi q \int_I y \,d t
  \end{equation*}
  we conclude that neither $q=0$ nor $y \equiv 0$ in $I$. Without loss
  of generality, we assume $q=1$, thus $|\gamma'| \leq 1$ almost
  everywhere in $I$.

  Uniform convergence implies that for any $J \Subset \set{y>0}$
  there is a constant $c_J>0$ such that $y_\eps \geq c_J$ in $J$ for
  all sufficiently small $\eps$. Therefore,
  \begin{equation}
    \label{eq:proof:coerc-phase-fields}
    \frac{1}{2\pi}
    \int_{M_\eps(J)} \eps |\snabla[M_\eps]{u_\eps}|^2 + \frac{1}{\eps}
    W(u_\eps) \,d\mu_\eps
    \geq
    c_J \int_{J} \frac{\eps}{q_\eps} |u_{\eps}'|^2 +
    \frac{q_\eps}{\eps} W(u_{\eps}) \,d t
  \end{equation}
  and the well-known arguments of Modica and Mortola
  \cite{Modica87,MoMo77} apply in $J$, see in particular \cite[Lemma
  6.2 and Remark 6.3]{Braides02} for a proof in one dimension.
  The outcome is a finite set of points $S_J \subset J$ and a
  piecewise constant function $u \colon J \to \set{\pm1}$ whose jump
  set is contained in $S_J$ such that a subsequence of $u_\eps$
  converges to $u$ in measure and almost everywhere in $J \sm
  S_J$. Since $(u_\eps)$ is uniformly bounded in $L^\infty(I)$,
  convergence in $L^p(J)$ for any $p \in [1,\infty)$ follows.
  Moreover, in the one-dimensional setting we obtain that in any set
  compactly contained in $J \sm S_J$ we have $|u_\eps| \geq 1/2$ for
  all sufficiently small $\eps$.

  Exhausting $\set{y>0}$ by a sequence of increasing sets such as $J_k
  = \set{y>1/k}$ as $k \to \infty$ and taking a diagonal sequence, we
  find an at most countable set $S \subset \set{y>0}$ with $S \cap J$
  finite for any $J \Subset \set{y>0}$, a function $u \colon \set{y>0}
  \to \set{\pm1}$ with $S_u \subset S$, and subsequence of $(u_\eps)$
  that converges to $u$ in measure and almost everywhere in
  $\set{y>0}$ and that satisfies $|u_\eps| \geq 1/2$ in any $J \Subset
  \set{y>0} \sm S$ for all sufficiently small $\eps>0$ depending on
  $J$. From the uniform $L^\infty$-bound on $u_\eps$ we infer
  convergence in $L^p(\set{y>0})$ for any $p \in [1,\infty)$.

  Along this subsequence we establish further compactness of the
  curves. Given $J \Subset \set{y>0} \sm S$, there holds $q_\eps \leq
  2$, $|u_\eps| \geq 1/2$, and $y_\eps \geq c_J$ for all sufficiently
  small $\eps>0$. Therefore, using \eqref{eq:surf:kappa1-int} we find
  that
  \begin{equation*}
    \int_{M_\eps} u_\eps^2 |B_\eps|^2 \,d\mu_\eps
    \geq
    \frac{1}{4} \int_{M_\eps(J)} \kappa_{1,\eps}^2 \,d\mu_\eps
    \geq
    \frac{\pi c_J}{16} \int_J |\gamma_\eps''|^2 \,d t
  \end{equation*}
  is uniformly bounded for all sufficiently small $\eps$, and a
  subsequence of $\gamma_\eps''$ converges weakly to some $\gamma_J''$
  in $L^2(J; \bbR^2)$. From $\gamma_\eps \wsto \gamma$ in
  $W^{1,\infty}(I; \bbR^2)$ we infer that $\gamma''_J$ is the
  weak derivative of $\gamma'$ in $J$ and that the whole sequence
  converges weakly in $W^{2,2}(J; \bbR^2)$.
  This shows $\gamma_\eps \wto \gamma$ in $W^{2,2}_{\loc}(\set{y>0}
  \sm S; \bbR^2)$ and by embedding $\gamma_\eps \to \gamma$ in
  $C^1_{\loc}(\set{y>0} \sm S; \bbR^2)$ and $\gamma_\eps' \to \gamma'$
  pointwise in $\set{y>0} \sm S$. Hence, we obtain $S_\gamma \subset
  S$, $1 = \lim q_\eps^2 = \lim |\gamma_\eps'|^2 = |\gamma'|^2$ in
  $\set{y>0} \sm S$, and
  \begin{equation*}
    A_0
    =
    \calA_\eps
    =
    2\pi \int_{\set{y>0}} q_\eps y_\eps \,d t
    +
    2\pi \int_{\set{y=0}} q_\eps y_\eps \,d t
    \to
    2\pi \int_{\set{y>0}}  y \,d t
    =
    \calA_\gamma
  \end{equation*}
  as well as
  \begin{equation*}
    m A_0 = \int_{M_\eps} u_\eps \,d\mu_\eps
    \to
    \int_{M_\gamma} u \,d\mu.
  \end{equation*}

  To conclude $(\gamma,u) \in \calD \times \calQ$ we must show that
  $\calH^1(M_\gamma(S_u \cup S_\gamma))$ and
  $\int_{M_\gamma(\set{y>0}\sm S_\gamma)} |B|^2 \,d\mu$ are finite.
  From $\gamma_\eps \wto \gamma$ in $W^{2,2}(J; \bbR^2)$, $u_\eps \to
  u \in \set{\pm1}$ in $L^2(J)$, and $\sup_\eps
  \|u_\eps\|_{\infty}<\infty$ in any $J \Subset \set{y>0} \sm S$ we
  infer that
  \begin{equation*}
    u_\eps \kappa_{1,\eps} \sqrt{q_\eps y_\eps} \wto u \kappa_1 \sqrt{y}
    \qquad\text{and}\qquad
    u_\eps \kappa_{2,\eps} \sqrt{q_\eps y_\eps} \to u \kappa_2 \sqrt{y}
    \qquad\text{in } L^2(J),
  \end{equation*}
  hence
  \begin{equation*}
    \int_{M_\gamma(J)} |B|^2 \,d\mu
    \leq
    \liminf_{\eps \to 0}
    \int_{M_\eps(J)} u_\eps^2 |B_\eps|^2 \,d\mu_\eps
    \leq
    \liminf_{\eps \to 0}
    \int_{M_\eps} u_\eps^2 |B_\eps|^2 \,d\mu_\eps.
  \end{equation*}
  Since the right hand side is bounded independently of $J$, we obtain
  \begin{equation*}
    \int_{M_\gamma(\set{y>0} \sm S)} |B|^2 \,d\mu
    \leq
    \liminf_{\eps \to 0}
    \int_{M_\eps} u_\eps^2 |B_\eps|^2 \,d\mu_\eps
    <
    \infty
  \end{equation*}
  by exhausting $\set{y>0} \sm S$.
  The inequality $\calH^1(M_\gamma(S_u \cup S_\gamma)) < \infty$
  follows from \eqref{eq:proof:coerc-phase-fields} and the fact that
  each kink or interface $s \in S_u \cup S_\gamma$ carries at least an
  energy of $2\pi \sigma y(s)$ in the limit $\eps \to 0$. The details
  are given in the lower bound section and thus are omitted here.
\end{proof}

The following corollary renders the convergence around possible kinks
more precise and will be used to establish the lower bound.

\begin{corollary}
  \label{cor:proof:angle-coercivity}
  For any subsequence as in Lemma \ref{lem:proof:equi-coercivity}
  there are angle functions $\phi_\eps \in L^\infty(I) \cap
  W^{1,2}_{\loc}(I)$ of $\gamma_\eps$ that converge weakly in
  $BV_{\loc}(\set{y>0})$ to an angle function $\phi$ of $\gamma$ in
  $\set{y>0}$. Moreover, $\phi \in W^{1,2}(J \sm S)$ for any $J
  \Subset \set{y>0}$.
\end{corollary}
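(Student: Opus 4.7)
}

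The plan is to construct the angle functions $\phi_\eps$ explicitly, then to combine a uniform $L^\infty$-bound with a uniform $BV$-bound on compact subsets of $\set{y>0}$ to invoke standard $BV_\loc$ compactness, and finally to identify the limit via the already-established $C^1_\loc$-convergence of $\gamma_\eps$ on $\set{y>0}\sm S$.

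First I would fix a continuous angle function $\phi_\eps$ of $\gamma_\eps$, which exists because $\gamma_\eps \in W^{2,1}_\loc(I;\bbR^2) \subset C^1_\loc(I;\bbR^2)$. The constraint $x_\eps' \geq 0$ forces $\cos\phi_\eps \geq 0$ on $I$, and since the set $\set{\cos \geq 0}$ in $\bbR$ decomposes into the disjoint branches $[-\pi/2+2\pi k,\pi/2+2\pi k]$, continuity of $\phi_\eps$ confines it to a single such branch. Translating by $2\pi k$ yields a uniform bound $\|\phi_\eps\|_{L^\infty(I)} \leq \pi/2$. The $W^{1,2}_\loc(I)$-regularity of $\phi_\eps$ follows from $\phi_\eps' = -q_\eps \kappa_{1,\eps}$ together with $\gamma_\eps \in W^{2,2}_\loc(I;\bbR^2)$, see \eqref{eq:surf:curvatures-angle}.

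Next I would establish a uniform $BV$-bound on any $J \Subset \set{y>0}$. On such $J$ there is $c_J>0$ with $y_\eps \geq c_J$ for small $\eps$, and I would split $J = J_+ \cup J_-$ with $J_\pm = J \cap \set{|u_\eps|\gtrless 1/2}$. On $J_+$, using $4u_\eps^2 \geq 1$ and \eqref{eq:surf:kappa1-int} together with the bound $\int_{M_\eps} u_\eps^2|B_\eps|^2\,d\mu_\eps \leq C$ from \eqref{eq:model:second-fform-bound} controls $\int_{J_+}|\phi_\eps'|^2\,dt$, and H\"older's inequality yields a uniform bound on $\int_{J_+}|\phi_\eps'|\,dt$. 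On $J_-$ the interface energy gives the Modica-Mortola measure estimate $|J_-| \leq C\eps$, while $\eps\int_{J}|\phi_\eps'|^2 y_\eps\,dt \leq C$ (again from \eqref{eq:model:second-fform-bound}) gives $\int_J|\phi_\eps'|^2\,dt \leq C/\eps$, so Cauchy--Schwarz produces $\int_{J_-}|\phi_\eps'|\,dt \leq \sqrt{C\eps}\,\sqrt{C/\eps}$, which is also uniform. Together with the $L^\infty$-bound this yields $\sup_\eps\|\phi_\eps\|_{BV(J)} < \infty$, and a standard diagonal argument over an exhaustion of $\set{y>0}$ by such $J$ gives a subsequence converging weakly in $BV_\loc(\set{y>0})$ to some $\phi$.

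For identification, I would use that along the subsequence from Lemma~\ref{lem:proof:equi-coercivity} the curves $\gamma_\eps$ converge in $C^1_\loc(\set{y>0}\sm S;\bbR^2)$ and $q_\eps \to 1$, so $(\cos\phi_\eps,\sin\phi_\eps) = q_\eps^{-1}\gamma_\eps' \to \gamma'$ pointwise on $\set{y>0}\sm S$. Passing to a further subsequence so that $\phi_\eps \to \phi$ a.e.\ in $\set{y>0}$ (possible by $L^1_\loc$-convergence from $BV_\loc$), I get $(\cos\phi,\sin\phi) = \gamma'$ a.e.\ on $\set{y>0}\sm S$, which identifies $\phi$ as an angle function of $\gamma$ on $\set{y>0}$. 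Finally, for $J \Subset \set{y>0}$, the relation $\phi' = -\kappa_1$ on $J\sm S$ together with $y \geq c_J$ and $\int_{M_\gamma(\set{y>0}\sm S)}|B|^2\,d\mu < \infty$ yields $\int_{J\sm S}|\phi'|^2\,dt \leq C\int_{J\sm S}\kappa_1^2 y\,dt < \infty$, and since $\phi$ is continuous on the finitely many components of $J\sm S$ this gives $\phi \in W^{1,2}(J\sm S)$.

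I expect the delicate step to be the $BV$-bound near points of $S$: the weighted quantity $\int u_\eps^2 |\phi_\eps'|^2 y_\eps\,dt$ controls $\phi_\eps'$ only where $u_\eps$ stays away from zero, so the small strips where $|u_\eps|<1/2$ surrounding incipient interfaces must be absorbed precisely by the Modica--Mortola measure estimate paired with the auxiliary $\eps\int|B|^2\,d\mu$ term — this is exactly the point where the scaling of the stabilising term matters, as already noted in the remark after Lemma~\ref{lem:surf:length-bound}.
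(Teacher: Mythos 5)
Your proposal is correct and follows essentially the same route as the paper: build the angle functions in $[-\pi/2,\pi/2]$ from $x_\eps'\geq0$, establish a uniform $BV$-bound on compact subsets of $\set{y>0}$, pass to a $BV_\loc$ limit and identify it via the $C^1_\loc$-convergence of $\gamma_\eps$ off $S$, then deduce the $W^{1,2}(J\sm S)$ regularity from the limiting second-fundamental-form bound. The only difference is cosmetic: the paper obtains the $BV$-bound by directly invoking Lemma~\ref{lem:model:variation-bound} (the first-variation bound $\int_{M_\eps}|B_\eps|\,d\mu_\eps\leq C$), whereas you re-derive a local version of that bound by splitting $J$ according to the size of $u_\eps$ — which is in fact precisely the argument used to prove Lemma~\ref{lem:model:variation-bound}.
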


\begin{proof}
  Without loss of generality let $I = (0,\calL_\gamma)$ and
  $q_\gamma=1$. Since $\gamma_\eps \in W^{2,2}_{\loc}(I; \bbR^2)$ and
  $x_\eps' \geq 0$ by definition of $\calC$, there are angle functions
  $\phi_\eps \in W^{1,2}_{\loc}(I; [-\pi/2,\pi/2])$ of $\gamma_\eps$.
  Recalling $\phi_\eps' = -\kappa_{1,\eps} q_\eps$, uniform
  convergence of $y_\eps$, and Lemma \ref{lem:model:variation-bound},
  we fix $J \Subset \set{y>0}$ and obtain that
  \begin{equation*}
    \int_J |\phi_\eps'| \,d t
    =
    \int_J |\kappa_{1,\eps}| q_\eps \,d t
    \leq
    \frac{1}{2 \pi c_J} \int_{M_\eps} |B_\eps| \,d\mu_\eps
  \end{equation*}
  is uniformly bounded by Lemma~\ref{lem:model:variation-bound} for
  all sufficiently small $\eps>0$. Hence, $\phi_\eps$ is uniformly
  bounded in $W^{1,1}(J)$ and there exists a subsequence that
  converges weakly in $BV(J)$ to some $\phi$, that is, $\phi_\eps \to
  \phi$ in $L^1(J)$ and $\kappa_{1,\eps} \,d t$ restricted to $J$
  converges weakly to the measure $d\phi'$. Consequently,
  $\gamma_\eps' = q_\eps (\cos \phi_\eps, \sin \phi_\eps) \to (\cos
  \phi, \sin \phi) = \gamma'$ in $L^p(J; \bbR^2)$, that is, $\phi$ is
  an angle function of $\gamma$.
  Since this argument can be applied to any subsequence, convergence
  of the whole sequence $(\phi_\eps)$ in $BV(J)$ follows, and $\phi$
  is defined almost everywhere in $\set{y>0}$ by exhaustion.
  Arguing as for $\gamma_\eps''$ in Lemma
  \ref{lem:proof:equi-coercivity}, we obtain $\phi_\eps \wto \phi$ in
  $W^{1,2}(\widetilde J)$ for any $\widetilde J \Subset \set{y>0} \sm
  S$ and
  \begin{equation*}
    \int_{\widetilde J} |\phi'|^2 \,d t
    \leq
    \frac{4}{\pi c_{\widetilde J}} \liminf_{\eps \to 0}
    \int_{M_\eps} u_\eps^2 \kappa_{1,\eps}^2 \,d\mu_\eps
    <
    \infty.
  \end{equation*}
  Exhausting $J \Subset \set{y>0}$ by $\widetilde J \Subset J \sm S$
  yields $\phi' \in L^2(J \sm S)$ and $\phi \in W^{1,2}(J \sm S)$.
\end{proof}


\subsection{Lower bound}

To prove the lower bound
\begin{equation}
  \label{eq:proof:lb}
  \liminf_{\eps \to 0} \calF_\eps(\gamma_\eps,u_\eps)
  \geq
  \calF(\gamma,u)
\end{equation}
whenever $(\gamma_\eps,u_\eps)$ converges to $(\gamma,u)$ in $C^0(I;
\bbR^2) \times L^1(\set{y>0})$, it suffices to consider sequences such
that the left hand side of \eqref{eq:proof:lb} is finite and the limit
inferior is attained. Then by definition of $\calF_\eps$ we have
$(\gamma_\eps,u_\eps) \in \calC \times \calP$, and the equi-coercivity
result yields $(\gamma,u) \in \calD \times \calQ$ and the convergence
properties listed in Lemma \ref{lem:proof:equi-coercivity} and
Corollary \ref{cor:proof:angle-coercivity}.
In the following we consider the bulk energy $\calH$, kinks and
interfaces, and the axis of revolution separately.


\subsubsection{Bulk lower bound}

\begin{lemma}
  There holds
    \begin{equation*}
    \liminf_{\eps \to 0} \calH_\eps(\gamma_\eps,u_\eps,\set{y>0})
    \geq
    \calH(\gamma,u).
  \end{equation*}
\end{lemma}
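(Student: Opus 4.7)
The plan is to use the compactness result Lemma \ref{lem:proof:equi-coercivity} together with Corollary \ref{cor:proof:angle-coercivity} and split $(H-H_s(u))^2 - K$ into two pieces that behave differently under the available convergences. We may assume $\liminf\calF_\eps(\gamma_\eps,u_\eps)<\infty$ and that the $\liminf$ is a limit, so $(\gamma,u)\in\calD\times\calQ$ and the convergences listed in Lemma \ref{lem:proof:equi-coercivity} hold with the countable exceptional set $S\supset S_\gamma\cup S_u$. First fix $J\Subset\set{y>0}\sm S$. On such $J$ we have $y_\eps\geq c_J>0$, $|u_\eps|\geq 1/2$, $\gamma_\eps\wto\gamma$ in $W^{2,2}(J;\bbR^2)$ (hence $\gamma_\eps\to\gamma$ in $C^1(J;\bbR^2)$), and $u_\eps\to u=\pm1$ strongly in $L^p(J)$ for every finite $p$, so $u_\eps^2\to 1$ pointwise a.e.\ and in $L^p$.

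Using the expressions \eqref{eq:surf:curvatures}, the key observation is that $\kappa_{2,\eps}=x_\eps'/(y_\eps|\gamma_\eps'|)$ converges \emph{uniformly} on $J$ to $\kappa_2$, while $\kappa_{1,\eps}$ converges only \emph{weakly} in $L^2(J)$ to $\kappa_1$. Combining this with the continuity and boundedness of $H_s$ gives, after multiplication by the uniformly convergent weight $\sqrt{q_\eps y_\eps}$, the weak $L^2(J)$-convergence
\[
u_\eps\bigl(H_\eps-H_s(u_\eps)\bigr)\sqrt{q_\eps y_\eps}\;\wto\;\bigl(H-H_s(u)\bigr)\sqrt{y},
\]
since products of weak $L^2$ and uniformly-bounded-and-a.e.-convergent sequences pass to the product limit in the weak $L^2$ sense. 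Weak lower semicontinuity of $\|\cdot\|_{L^2}^2$ then yields
\[
\liminf_\eps\int_{M_\eps(J)} u_\eps^2(H_\eps-H_s(u_\eps))^2\,d\mu_\eps
\geq \int_{M_\gamma(J)}(H-H_s(u))^2\,d\mu.
\]
For the Gauss-curvature piece, the product $u_\eps^2\kappa_{1,\eps}\kappa_{2,\eps}q_\eps y_\eps$ is a weak$\,\times\,$strong $L^2$-product and therefore converges in $L^1(J)$, giving the equality $\int_{M_\eps(J)}u_\eps^2 K_\eps\,d\mu_\eps\to\int_{M_\gamma(J)}K\,d\mu$. Subtracting these two estimates yields
\[
\liminf_\eps\calH_\eps(\gamma_\eps,u_\eps,J)\;\geq\;\int_{M_\gamma(J)}(H-H_s(u))^2-K\,d\mu.
\]

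It remains to pass from $J$ to $\set{y>0}\sm S_\gamma$. I would exhaust $\set{y>0}\sm S$ by an increasing sequence $J_k\Subset\set{y>0}\sm S$, estimate the contribution from the remainder with \eqref{eq:model:bending-positivity}, which gives
\[
\calH_\eps(\gamma_\eps,u_\eps,\set{y_\eps>0}\sm J_k)\;\geq\;-\|H_s\|_\infty^2 C_0^2\,\calA_{\gamma_\eps}(\set{y_\eps>0}\sm J_k),
\]
and observe that the right-hand area converges as $\eps\to0$ to $\calA_\gamma(\set{y>0}\sm J_k)$, which tends to $\calA_\gamma(S)=0$ as $k\to\infty$ since $S$ is countable. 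Dominated convergence (using $|(H-H_s(u))^2-K|\leq (H-H_s(u))^2+\tfrac12|B|^2\in L^1(M_\gamma(\set{y>0}\sm S_\gamma))$, and the fact that $S\sm S_\gamma$ has zero $\mu$-measure) then lets $\int_{M_\gamma(J_k)}(H-H_s(u))^2-K\,d\mu\to\calH(\gamma,u)$.

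The main obstacle is the indefinite sign of $K$: lower semicontinuity of a quadratic form alone cannot handle it, so one really needs the full convergence of the Gauss-curvature integral, which is why the distinct roles of $\kappa_1$ (only weak $L^2$) and $\kappa_2$ (uniform) under rotational symmetry are essential. A secondary subtlety is to organise the exhaustion so that the ``bad'' strip $\set{y_\eps>0}\sm J_k$ is controlled by a constant that vanishes after first sending $\eps\to0$ and then $k\to\infty$; this is exactly what \eqref{eq:model:bending-positivity} together with the uniform area constraint $\calA_{\gamma_\eps}=A_0$ provides.
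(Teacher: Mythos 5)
Your proof is correct and follows essentially the same route as the paper's: weak $W^{2,2}$ convergence of the curves and strong $L^p$ convergence of the phase fields on compact subsets $J\Subset\set{y>0}\sm S$ give weak lower semicontinuity of $\int u_\eps^2(H_\eps-H_s(u_\eps))^2\,d\mu_\eps$ and full convergence of $\int u_\eps^2 K_\eps\,d\mu_\eps$ via a weak--strong pairing, and the exhaustion of $\set{y>0}\sm S$ is controlled by the lower bound \eqref{eq:model:bending-positivity}. The only differences are cosmetic: for the Gauss term the paper factors $K_\eps q_\eps y_\eps=-y_\eps''/q_\eps$ (so that a single weak factor pairs against $u_\eps^2/q_\eps$) whereas you pair $\kappa_{1,\eps}$ (weak) against $\kappa_{2,\eps}$ (uniform); and for the exhaustion the paper adds $\int u_\eps^2 H_s(u_\eps)^2\,d\mu_\eps$ to both sides so the integrand becomes pointwise non-negative, enlarges the domain, and subtracts at the end, whereas you bound $\calH_\eps$ on the remainder $\set{y>0}\sm J_k$ directly by $-\|H_s\|_\infty^2C_0^2\calA_{\gamma_\eps}(\set{y>0}\sm J_k)$, send $\eps\to0$ then $k\to\infty$. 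Both versions rest on exactly the same inequality. Two small imprecisions that do not affect the argument: the weak $L^2(J)$ limit of $u_\eps(H_\eps-H_s(u_\eps))\sqrt{q_\eps y_\eps}$ is $u(H-H_s(u))\sqrt{y}$ (with the sign $u=\pm1$ retained), not $(H-H_s(u))\sqrt{y}$ --- irrelevant once you square the $L^2$-norm; and a weak-times-strong $L^2$ product converges weakly in $L^1(J)$, not strongly, but convergence of the integral is all you use.
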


\begin{proof}
  Let $J \Subset \set{y>0} \sm S$. From $\gamma_\eps \wto \gamma$ in
  $W^{2,2}(J;\bbR^2)$, $u_\eps \to u \in \set{\pm1}$ in $L^2(J)$, and
  $\sup_\eps \|u_\eps\|_{L^\infty(J)} < \infty$ we infer that
  \begin{equation*}
    u_\eps H_\eps \sqrt{|\gamma_\eps'| y_\eps} \wto u H \sqrt{|\gamma'| y}
    \qquad\text{in } L^2(J)
  \end{equation*}
  and, using \eqref{eq:surf:gauss-curv-angle} and $|\gamma_\eps'| =
  q_\eps$, that
  \begin{equation*}
    u_\eps^2 K_\eps |\gamma_\eps'| y_\eps
    =
    - u_\eps^2 \frac{y_\eps''}{q_\eps}
    \wto
    - u^2 \frac{y''}{q}
    =
    u^2 K |\gamma'| y
    \qquad \text{in } L^1(J).
  \end{equation*}
  Moreover, we have
  \begin{equation}
    \label{eq:proof:bulk-lb-convergence-added-part}
    u_\eps H_{s}(u_\eps) \sqrt{|\gamma_\eps'| y_\eps}
    \to u H_{s}(u) \sqrt{|\gamma'| y}
    \qquad\text{in } L^2(\set{y>0}).
  \end{equation}
  Hence the inequality
  \begin{equation}
    \label{eq:proof:bulk-lb}
    \calH(\gamma,u,J)
    +
    \int_{M_\gamma(J)} H_{s}(u)^2 \,d\mu
    \leq
    \liminf_{\eps \to 0}
    \left(
      \calH_\eps(\gamma_\eps,u_\eps,J)
      +
      \int_{M_\eps(J)} u_\eps^2 H_{s}(u_\eps)^2 \,d\mu_\eps
    \right)
  \end{equation}
  holds. As seen in \eqref{eq:model:bending-positivity}, the integrand
  on the right hand side of \eqref{eq:proof:bulk-lb} is non-negative,
  so we estimate the integral from above by extending its domain to
  $M_\eps(\set{y>0})$. The right hand side is then independent of $J
  \Subset \set{y>0} \sm S$, and by exhausting $\set{y>0} \sm S$ we
  obtain
  \begin{multline*}
    \calH(\gamma,u)
    + \int_{M_\gamma(\set{y>0} \sm S)}  H_{s}(u)^2 \,d\mu
    \\
    \leq
    \liminf_{\eps \to 0}
    \calH_\eps(\gamma_\eps,u_\eps,\set{y>0})
    +
    \limsup_{\eps \to 0}
    \int_{M_\eps(\set{y>0})} u_\eps^2 H_{s}(u_\eps)^2 \,d\mu_\eps.
  \end{multline*}
  The claim now follows from the convergence
  \eqref{eq:proof:bulk-lb-convergence-added-part}.
\end{proof}


\subsubsection{Kinks and interfaces}

Next we consider the interface energies $\calI_\eps$ and $\calI$ in
$\set{y>0}$. Points in $S \sm (S_u \cup S_\gamma)$ do not contribute
to the limit energy $\calI$, so it suffices to examine $s \in S_u \cup
S_\gamma$. In the following let $J \Subset \set{y>0}$ be an interval
around $s$ such that $\overline{J}\cap S = \set{s}$, which exists
because $S \cap \set{y > y(s)/2}$ is finite. Again, we assume
without loss of generality that $q_\gamma=1$.

If $s \in S_u \sm S_\gamma$ is an interface without kink, we estimate
the curvature term in $\calI_\eps$ from below by zero and use a
standard argument for the other terms as in \cite[Chapter
6]{Braides02}. That is, from the convergence of $u_\eps$ we deduce
that there are points $a_\eps,b_\eps \in J$ such that $a_\eps \to s$,
$b_\eps \to s$, $u_\eps(a_\eps) \to -1$, $u_\eps(b_\eps) \to +1$, and
without loss of generality $a_\eps < s < b_\eps$. By Young's
inequality and a change of variables we obtain
\begin{align*}
  \frac{1}{2\pi} \int_{M_\eps(a_\eps,b_\eps)}
  \eps |\snabla[M_\eps]{u_\eps}|^2 + \frac{1}{\eps} W(u_\eps) \,d\mu_\eps
  &\geq
  \left( \inf_{(a_\eps,b_\eps)} y_\eps \right)
  \int_{a_\eps}^{b_\eps} 2 \sqrt{W(u_\eps)}|u_\eps'| \,d t
  \\
  &\geq
  \left( \inf_{(a_\eps,b_\eps)} y_\eps \right)
  \int_{u_\eps(a_\eps)}^{u_\eps(b_\eps)} 2 \sqrt{W(u)} \,d u,
\end{align*}
and taking the lower limit as $\eps \to 0$ yields
\begin{equation}
  \label{eq:proof:interf-lb}
  \liminf_{\eps \to 0}
  \int_{M_\eps(a_\eps,b_\eps)}
  \eps |\snabla[M_\eps]{u_\eps}|^2 + \frac{1}{\eps} W(u_\eps) \,d\mu_\eps
  \geq
  2\pi y(s) \sigma.
\end{equation}

If $s \in S_u \cap S_\gamma$ is a kink and a proper interface, let
$(\phi_\eps)$ be angle functions of $(\gamma_\eps)$ in $J$ that converge
weakly in $BV(J)$ to an angle function $\phi$ of $\gamma$. We then
have
\begin{equation*}
  \int_J \phi_\eps' y_\eps \,d t
  \to
  [\phi](s)y(s)
  -
  \int_{J \sm \set{s}} \kappa_1 y \,d t,
\end{equation*}
where $\kappa_1 = -\phi' \in L^2(J\sm\set{s})$ is the curvature of
$\gamma$ in $J \sm \set{s}$ and $[\phi](s)$ the jump of the angle
$\phi$ at $s$.
Since $x_\eps' \geq 0$, we may assume $\phi_\eps \in [-\pi/2, \pi/2]$,
so that $[\gamma'] = [\phi] \in [-\pi,\pi]$.
The key step for the lower bound in $J$ is to formalise the intuition
that $\gamma_\eps$ approaches the kink where $u_\eps$ is close to
zero.

\begin{lemma}
  \label{lem:proof:kink-lb-1}
  For sufficiently small $\delta>0$ let $J_{\eps,\delta} = \set[t \in
  I]{|u_\eps| \leq \delta}$. Then
  \begin{equation*}
    \liminf_{\eps \to 0}
    \left|
      \int_{J \cap J_{\eps,\delta}} \phi_\eps' y_\eps \,d t
    \right|
    \geq
    y(s) |[\phi](s)|.
  \end{equation*}
\end{lemma}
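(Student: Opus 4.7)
The plan is to decompose the integral over $J$ into the ``diffuse'' part $J \cap J_{\eps,\delta}$ and the ``bulk'' part $J \setminus J_{\eps,\delta}$, compute the full limit over $J$ from the weak $BV$-convergence of $\phi_\eps$, and then show that the bulk part converges to $\int_J \phi' y \,dt$, so that subtraction isolates the jump contribution $y(s)\,[\phi](s)$.

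First, by Corollary \ref{cor:proof:angle-coercivity} we have $\phi_\eps \wto \phi$ in $BV(J)$, and since $\overline{J} \cap S = \{s\}$ and $\phi \in W^{1,2}(J \setminus \{s\})$, the distributional derivative decomposes as $D\phi = \phi' \,dt + [\phi](s)\,\delta_s$ on $J$. Combined with the uniform convergence $y_\eps \to y$ on $\overline{J}$ coming from $\gamma_\eps \wsto \gamma$ in $W^{1,\infty}(I;\bbR^2)$, this yields
\begin{equation*}
  \int_J \phi_\eps'\, y_\eps \,d t
  \to \int_J \phi'\, y \,d t + y(s)\,[\phi](s).
\end{equation*}

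Second, on $J \setminus J_{\eps,\delta}$ we have $u_\eps^2 > \delta^2$, so using $\phi_\eps' = -\kappa_{1,\eps} q_\eps$ together with \eqref{eq:model:second-fform-bound} and the uniform bound $y_\eps \geq c_J > 0$ on $J$, the truncated derivatives $\mathbf{1}_{J \setminus J_{\eps,\delta}} \phi_\eps'$ form a bounded sequence in $L^2(J)$. For $\delta < 1/2$ and any $\widetilde J \Subset J \setminus \{s\}$, Lemma \ref{lem:proof:equi-coercivity} yields $|u_\eps| \geq 1/2 > \delta$ on $\widetilde J$ for small $\eps$, so the indicator equals $1$ on $\widetilde J$ eventually; at the same time Corollary \ref{cor:proof:angle-coercivity} gives $\phi_\eps' \wto \phi'$ in $L^2(\widetilde J)$. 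Any weak $L^2(J)$-limit of $\mathbf{1}_{J \setminus J_{\eps,\delta}}\phi_\eps'$ must therefore coincide with $\phi'$ on every such $\widetilde J$, hence equal $\phi'$ almost everywhere in $J$; uniqueness of the weak limit upgrades this to convergence of the full sequence. Pairing the weak $L^2$-convergence with the uniform convergence $y_\eps \to y$ gives
\begin{equation*}
  \int_{J \setminus J_{\eps,\delta}} \phi_\eps'\, y_\eps \,d t
  \to \int_J \phi'\, y \,d t,
\end{equation*}
and subtracting from the previous display produces $\int_{J \cap J_{\eps,\delta}} \phi_\eps' y_\eps \,d t \to y(s)\,[\phi](s)$, from which the claim (in fact, with equality) follows by taking absolute values.

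The main obstacle is the identification of the weak $L^2(J)$-limit of the truncated derivatives $\mathbf{1}_{J \setminus J_{\eps,\delta}} \phi_\eps'$: one has to exclude any concentration of rotation at $s$ from outside the diffuse interface, since such concentration would leak part of the jump into the bulk integral. This is precisely where the quadratic bending bound \eqref{eq:model:second-fform-bound}, once weighted below by $u_\eps^2 > \delta^2$, becomes indispensable — a mere $L^1$-bound on $\phi_\eps'$ outside $J_{\eps,\delta}$ (as supplied by Lemma \ref{lem:model:variation-bound}) would not rule out a Dirac mass at $s$ in the limit.
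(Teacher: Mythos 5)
Your proof is correct and reaches the (slightly stronger) conclusion that the limit exists and equals $y(s)[\phi](s)$, from which the $\liminf$ inequality follows; the paper's proof actually obtains the same limit, stating only the $\liminf$ form.

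The route differs in the key step. The paper works with the residual $w_\eps = \phi_\eps' y_\eps + \kappa_1 y$ and shows $\int_{J\sm J_{\eps,\delta}} w_\eps\,dt \to 0$ by a localization argument: it fixes a small neighbourhood $U_\beta = [s-\beta,s+\beta]$, observes that $J\cap J_{\eps,\delta}\subset U_\beta$ for small $\eps$, uses weak convergence of $w_\eps$ on $J\sm U_\beta$ to kill that piece, and bounds the piece on $(J\sm J_{\eps,\delta})\cap U_\beta$ by $C\sqrt{\beta}$ via H\"older and the $L^2$ bound on $\phi_\eps'$ outside $J_{\eps,\delta}$. You instead identify the weak $L^2(J)$ limit of the truncated sequence $\mathbf{1}_{J\sm J_{\eps,\delta}}\phi_\eps'$ as $\phi'$ by testing on every $\widetilde J\Subset J\sm\set{s}$, then pair with strong $L^2$ convergence of $y_\eps$. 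Both approaches use exactly the same two ingredients — the uniform $L^2$ bound coming from \eqref{eq:model:second-fform-bound} weighted by $u_\eps^2>\delta^2$, and the local weak $W^{2,2}$ compactness away from $s$ — and essentially formalize the same intuition (no rotation concentrates at $s$ from outside the transition layer). Your version is a bit more streamlined and avoids the auxiliary parameter $\beta$; the paper's version is more hands-on and perhaps more robust if one only had weaker information about $\phi'$. Two small points worth spelling out if you wanted a self-contained write-up: the passage from weak-$\star$ convergence in $BV(J)$ to $\int_J \phi_\eps' y_\eps \,dt \to \int_J y\,dD\phi$ uses the uniform $W^{1,1}(J)$ bound on $\phi_\eps$ to absorb $\|y_\eps - y\|_\infty$, plus the local $W^{2,2}$ convergence near $\partial J$ to rule out boundary concentration (the paper treats this as given in the lines preceding the lemma, so citing it is fine); and $\phi'\in L^2(J)$ should be noted to come from Corollary~\ref{cor:proof:angle-coercivity} together with $\set{s}$ being a null set.
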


\begin{proof}
  We show that the complement of $J_{\eps,\delta}$ in $J$ contains
  only the absolutely continuous part of $d\phi'$.
  Let $\beta>0$ be arbitrary but fixed, and let $U_\beta =
  [s-\beta,s+\beta]$. As $J \sm U_\beta \Subset \set{y>0} \sm S$, we
  have $|u_\eps| \geq 2\delta$ in $J \sm U_\beta$ for all sufficiently
  small $\eps$ according to Lemma \ref{lem:proof:equi-coercivity}, and
  therefore $J \cap J_{\eps,\delta} \subset U_\beta$.
  Writing $w_\eps = \phi_\eps' y_\eps + \kappa_1 y$, we have
  \begin{equation*}
    \left|
      \int_{J \sm J_{\eps,\delta}} w_\eps \,d t
    \right|
    \leq
    \left|
      \int_{J \sm U_\beta} w_\eps \,d t
    \right|
    +
    \int_{(J \sm J_{\eps,\delta}) \cap U_\beta} |w_\eps| \,d t
  \end{equation*}
  for all sufficiently small $\eps$. The first term on the right hand
  side converges to $0$ by weak convergence of $w_\eps$ in $J \sm
  U_\beta$, and the second integral is bounded by a constant times
  $\sqrt{\beta}$ due to H{\"o}lder's inequality and the uniform bound
  on the second fundamental forms of $M_{\eps}$ in $I \sm
  J_{\eps,\delta}$.
  As $\beta>0$ is arbitrary, we obtain
  \begin{equation*}
    \limsup_{\epsilon \to 0}
    \left|
      \int_{J \sm J_{\eps,\delta}} w_\eps \,d t
    \right|
    = 0,
  \end{equation*}
  and taking the lower limit in the inequality
  \begin{equation*}
    \left|
      \int_{J \cap J_{\eps,\delta}} w_\eps \,d t
    \right|
    \geq
    \left|
      \int_{J} w_\eps \,d t
    \right|
    -
    \left|
      \int_{J \sm J_{\eps,\delta}} w_\eps \,d t
    \right|
  \end{equation*}
  yields the claim because $|J \cap J_{\eps,\delta}| \to 0$ as
  $\eps\to0$ due to the uniform bound on
  \begin{equation*}
    \int_{M_\eps(J \cap J_{\eps,\delta})}
    \frac{1}{\eps} W(u_\eps) \,d\mu_\eps
    \geq
    \left( \inf_{[-\delta,\delta]} W \right)
    \frac{|J \cap J_{\eps,\delta}|}{\eps}
  \end{equation*}
  and $y\kappa_1 \in L^2(J \sm \set{s})$.
\end{proof}

Using the above splitting of $J$ into $J \cap J_{\eps,\delta}$ and $J
\sm J_{\eps,\delta}$, we prove the lower bound inequality.

\begin{lemma}
  \label{lem:proof:kink-lb-2}
  There holds
  \begin{equation*}
    \liminf_{\eps \to 0}
    \calI_\eps(\gamma_\eps,u_\eps,J)
    \geq
    2 \pi
    \left(
      \hat\sigma |[\gamma'](s)| + \sigma
    \right)
    y(s).
  \end{equation*}
\end{lemma}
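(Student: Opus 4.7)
The plan is to split the domain $J$ according to the magnitude of $u_\eps$ and correspondingly split the interface energy, treat the two pieces independently, and then send $\delta \to 0$. With $J_1 := J \cap J_{\eps,\delta}$ and $J_2 := J \setminus J_{\eps,\delta}$ and dropping the non-negative terms we do not use, I would start from
\begin{equation*}
  \calI_\eps(\gamma_\eps,u_\eps,J)
  \geq
  \int_{M_\eps(J_1)} \left( \eps |B_\eps|^2 + \frac{1}{\eps} W(u_\eps) \right) d\mu_\eps
  +
  \int_{M_\eps(J_2)} \left( \eps |\snabla[M_\eps]{u_\eps}|^2 + \frac{1}{\eps} W(u_\eps) \right) d\mu_\eps,
\end{equation*}
designing the first integral to produce the kink contribution $2\pi\hat\sigma|[\gamma'](s)|y(s)$ and the second to produce the standard interface contribution $2\pi\sigma y(s)$.

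For the kink integral, $|B_\eps|^2 \geq \kappa_{1,\eps}^2 = \phi_\eps'^2/q_\eps^2$ together with Young's inequality applied to $\eps\phi_\eps'^2/q_\eps + W(u_\eps)q_\eps/\eps \geq 2|\phi_\eps'|\sqrt{W(u_\eps)}$ would give, in rotationally symmetric coordinates,
\begin{equation*}
  \int_{M_\eps(J_1)} \left( \eps|B_\eps|^2 + \frac{1}{\eps} W(u_\eps) \right) d\mu_\eps
  \geq
  4\pi \int_{J_1} |\phi_\eps'| \sqrt{W(u_\eps)}\, y_\eps\, dt
  \geq
  4\pi \sqrt{c_\delta}\, \Bigl| \int_{J_1} \phi_\eps' y_\eps\, dt \Bigr|,
\end{equation*}
where $c_\delta := \inf_{[-\delta,\delta]} W$. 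Lemma~\ref{lem:proof:kink-lb-1} then provides $\liminf_{\eps\to 0} \geq 4\pi \sqrt{c_\delta}\, y(s)|[\phi](s)|$, which converges to $2\pi \hat\sigma\, y(s) |[\gamma'](s)|$ as $\delta \to 0$, since $c_\delta \to W(0)$ and $|[\phi](s)| = |[\gamma'](s)|$ under the unit-speed parametrisation with $\phi_\eps \in [-\pi/2,\pi/2]$.

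For the interface integral, I would mimic the Modica--Mortola argument used above for the case $s \in S_u \setminus S_\gamma$, but applied to the subset $J_2$ where $|u_\eps| > \delta$. Fixing $\beta > 0$ with $U_\beta := (s-\beta, s+\beta) \subset J$, for small $\eps$ we have $J_1 \subset U_\beta$ (as already exploited in the proof of Lemma~\ref{lem:proof:kink-lb-1}), and by $u_\eps \to u$ together with the jump of $u$ at $s$ there exist $a_\eps,b_\eps \in U_\beta$ with $u_\eps(a_\eps) \to -1$ and $u_\eps(b_\eps) \to +1$. Hence the range of $u_\eps$ on $J_2 \cap U_\beta$ contains $[-1+\eta,-\delta] \cup [\delta, 1-\eta]$ for every $\eta > 0$ and all sufficiently small $\eps$, so Young's inequality and a change of variables yield
\begin{equation*}
  \int_{M_\eps(J_2)} \left( \eps |\snabla[M_\eps]{u_\eps}|^2 + \frac{1}{\eps} W(u_\eps) \right) d\mu_\eps
  \geq
  4\pi \bigl( \inf_{U_\beta} y_\eps \bigr)
  \left( \int_{-1+\eta}^{-\delta} \!\sqrt{W}\, dv + \int_{\delta}^{1-\eta} \!\sqrt{W}\, dv \right).
\end{equation*}
Taking $\liminf_{\eps\to 0}$, then $\eta \to 0$, $\beta \to 0$, and finally $\delta \to 0$, the right hand side converges to $2\pi y(s) \sigma$. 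Summing both lower bounds produces the claim.

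The main obstacle is localisation: the factor $y_\eps$ occurring in both integrals must collapse onto $y(s)$ in the limit without spoiling either the Modica--Mortola transition or the kink identification. For the kink piece this is already built into Lemma~\ref{lem:proof:kink-lb-1}, which carries $y_\eps$ inside the integral; for the interface piece one needs the shrinking neighbourhood $U_\beta$ together with uniform convergence $y_\eps \to y$ and continuity of $y$ at $s$, and one must additionally verify that, for $\eps$ small, the complete transition of $u_\eps$ from $-1$ to $+1$ is confined to $U_\beta$ (which follows from $u_\eps \to u$ in $L^1$ across the jump). Carefully interleaving the limits $\eps \to 0$, $\eta \to 0$, $\beta \to 0$, and $\delta \to 0$ is the technical heart of the argument.
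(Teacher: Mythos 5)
Your proposal follows essentially the same route as the paper: the same split of $J$ into $J\cap J_{\eps,\delta}$ (where you drop the phase-field gradient and keep $\eps|B_\eps|^2$, reduced to $\kappa_{1,\eps}^2 = \phi_\eps'^2/q_\eps^2$, paired with $W$ via Young to invoke Lemma~\ref{lem:proof:kink-lb-1}) and $J\setminus J_{\eps,\delta}$ (where you drop the curvature term and run Modica--Mortola), followed by $\sup_{\delta>0}$. The only cosmetic difference is the extra parameter $\eta$ and the explicit $U_\beta$ localisation; the paper instead picks on each side of $s$ an interval $(a_\eps,b_\eps)\subset J\setminus J_{\eps,\delta}$ with $u_\eps(a_\eps)\to\delta$ and $u_\eps(b_\eps)\to 1$ (resp.\ $-\delta$, $-1$), which makes the change-of-variables limit $\int_\delta^1\sqrt{W}$ immediate and avoids the $\eta$-limit and the slightly informal ``range contains'' claim (one should exhibit a monotone crossing subinterval inside $J\setminus J_{\eps,\delta}$ rather than assert the range of $u_\eps$ on $J_2\cap U_\beta$ covers the stated sets, since $J_2\cap U_\beta$ need not be an interval). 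This is a straightforward fix and the argument is otherwise correct.
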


\begin{proof}
  With the notation of the Lemma \ref{lem:proof:kink-lb-1} we
  have
  \begin{equation*}
    \frac{\calI_\eps(\gamma_\eps,u_\eps,J)}{2\pi}
    \geq
    \int_{J \cap J_{\eps,\delta}}
    \left(
      \frac{\eps}{q_\eps}|\phi_\eps'|^2
      + \frac{q_\eps}{\eps} W(u_\eps) 
    \right) y_\eps \,d t 
    +
    \int_{J \sm J_{\eps,\delta}}
    \left( 
      \frac{\eps}{q_\eps}|u_\eps'|^2 + \frac{q_\eps}{\eps} W(u_\eps) 
    \right) y_\eps \,d t.
  \end{equation*}
  Estimating the first term on the right hand side with Young's
  inequality we obtain
  \begin{align*}
    \int_{J \cap J_{\eps,\delta}}
    \left(
      \frac{\eps}{q_\eps}|\phi_\eps'|^2
      + \frac{q_\eps}{\eps} W(u_\eps) 
    \right) y_\eps \,d t 
    &\geq
    \int_{J \cap J_{\eps,\delta}}
    2 \sqrt{W(u_\eps)} |\phi_\eps'| y_\eps \,d t
    \\
    &\geq
    2 \inf_{u \in [-\delta,\delta]} \sqrt{W(u)}
    \left|
      \int_{J \cap J_{\eps,\delta}} \phi_\eps' y_\eps \,d t
    \right|.
  \end{align*}
  With the second integral we deal as in
  \eqref{eq:proof:interf-lb}; the only difference is that we
  now find an interval $(a_\eps,b_\eps) \subset J \sm J_{\eps,\delta}$
  such that $u_\eps(a_\eps) \to \delta$, $u_\eps(b_\eps) \to 1$ on one
  side of $s$, and the same with $-\delta$ and $-1$ on the other side.
  Combining both estimates and taking the lower limit as $\eps \to 0$
  yields
  \begin{align*}
    \frac{1}{2\pi}
    \liminf_{\eps \to 0}
    \calI_\eps(\gamma_\eps,u_\eps,J)
    &\geq
    2 y(s) |[\phi](s)| \inf_{[-\delta,\delta]} \sqrt{W(u)}
    \\
    &\quad+
    2 y(s) \int_{\delta}^{1} \sqrt{W(u)} \,d u
    +
    2 y(s) \int_{-1}^{-\delta} \sqrt{W(u)} \,d u.
  \end{align*}
  Taking the supremum over $\delta > 0$ finishes the proof.
\end{proof}

Finally, if $s \in S_\gamma \sm S_u$ is a ghost interface, then the
phase field $u$ is constant in $\overline{J}$, say $u \equiv 1$. The
argument with the splitting of $J$ into $J \cap J_{\eps,\delta}$ and
$J \sm J_{\eps,\delta}$ is as in Lemma \ref{lem:proof:kink-lb-2}, but
now there is an interval $(a_\eps,b_\eps) \subset J \sm
J_{\eps,\delta}$ such that $u_\eps(a_\eps) \to \delta$,
$u_\eps(b_\eps) \to 1$ on either side of $s$. Hence, we conclude
\begin{equation*}
  \frac{1}{2\pi}
  \liminf_{\eps \to 0}
  \calI_\eps(\gamma_\eps,u_\eps,J)
  \geq
  \hat\sigma |[\gamma'](s)| y(s)
  +
  4 y(s) \int_0^1 \sqrt{W(u)} \,d u,
\end{equation*}
and the right hand side is equal to $\hat\sigma |[\gamma'](s)| y(s) +
\sigma y(s)$ due to the symmetry of $W$. The same argument holds when
$u \equiv -1$ near $s$.

The above reasoning extends to any finite subset $S$ of $S_\gamma \cup
S_u$, and we obtain
\begin{equation*}
  \liminf_{\eps \to 0}
  \calI_\eps(\gamma_\eps,u_\eps,\set{y>0})
  \geq
  2\pi \sum_{s \in S}
  \left( \hat\sigma |[\gamma'](s)| + \sigma \right) y(s).
\end{equation*}
Since the left hand side is independent of $S$, we conclude the lower
bound for kinks and interfaces
\begin{equation*}
  \liminf_{\eps \to 0}
  \calI_\eps(\gamma_\eps,u_\eps, \set{y>0})
  \geq
  2\pi \sum_{s \in S_\gamma \cup S_u} \left(
    \sigma + \hat\sigma |[\gamma'](s)| \right) y(s)
  =
  \calI(\gamma,u, \set{y>0}).
\end{equation*}


\subsubsection{Axis of revolution}

To motivate the lower bound estimate at the axis of revolution, we
first consider the simple example that $\gamma_\eps(t)=(q_\eps
t,y_\eps)$, $y_\eps \in \bbR$ is a straight horizontal line segment in
$R = \set{y=0}$ such that $y_\eps \to 0$ as $\eps \to 0$. Then
$\kappa_{1,\eps}=0$, while $\kappa_{2,\eps} = 1 / y_\eps$ blows up and
contributes to the limit of $\calF_\eps$. From the uniform bound on
\begin{equation*}
  \int_{M_\eps(R)} u_\eps^2 \kappa_{2,\eps}^2 \,d\mu_\eps
  \geq
  \frac{2\pi}{q_\eps \sup_R y_\eps} \int_R u_\eps^2 x_\eps'^2 \,d t
\end{equation*}
and $x_\eps' = q_\eps \to q = x'$ in $R$ we get $u_\eps \to 0$ in
$R$. Therefore, the potential term contributes to the limit energy.
On the other hand, there is no reason for $u_\eps$ to have a large
gradient in $R$, and it is reasonable to assume that $u_\eps$
tends to zero sufficiently fast so that there is no contribution of
$\calH_\eps(\gamma_\eps, u_\eps, R)$ in the limit.
Then
\begin{equation*}
  \calF_\eps(\gamma_\eps,u_\eps,R)
  \sim
  \int_{M_\eps(R)}
  \frac{1}{\eps} W(u_\eps) + \eps \kappa_{2,\eps}^2 \,d\mu_\eps
  \sim
  \int_{R} \frac{y_\eps}{\eps} + \frac{\eps}{y_\eps} \,d t
\end{equation*}
is bounded as $\eps\to0$ if and only if $y_\eps \sim \eps$, and in
this case $\calF_\eps(\gamma_\eps,u_\eps,R) \sim \calH^1(R)$.

To extend this reasoning to general $(\gamma_\eps, u_\eps)$, when in
particular the behaviour of $u_\eps$ is not known, recall that
$\calA_{\eps}(R) \to 0$ as $\eps \to 0$ and $\|u_\eps\|_\infty
\leq C_0$. These properties imply
\begin{equation*}
  \int_{M_\eps(R)} u_\eps^2 H_{s}(u_\eps)^2 \,d\mu_\eps
  \to 0
  \qquad\text{as}\qquad \eps \to 0,
\end{equation*}
and with \eqref{eq:model:bending-positivity} we conclude
\begin{equation*}
  \liminf_{\eps \to 0}
  \calH_\eps(\gamma_\eps,u_\eps,R)
  =
  \liminf_{\eps \to 0}
  \left(
    \calH_\eps(\gamma_\eps,u_\eps,R)
    +
    \int_{M_\eps(R)} u_\eps^2 H_{s}(u_\eps)^2 \,d\mu_\eps
  \right)
  \geq
  0.
\end{equation*}
For the interface energy we consider again $J_{\eps,\delta} =
\set{|u_\eps| \leq \delta}$. Similar to the proof of Lemma
\ref{lem:proof:kink-lb-1}, H{\"o}lder's inequality yields
\begin{equation}
  \label{eq:proof:aor-hoelder}
  \left(
    \delta \int_{M_\eps(R \sm J_{\eps,\delta})} |B_\eps| \,d\mu_\eps
  \right)^2
  \leq
  \calA_\eps(R)
  \int_{M_\eps} u_\eps^2 |B_\eps|^2 \,d\mu_\eps,
\end{equation}
and the right hand side of \eqref{eq:proof:aor-hoelder} vanishes in
the limit $\eps\to0$. Thus, using Young's inequality and $x_\eps'
\wsto x'$ in $L^\infty(I)$, we find
\begin{align*}
  \liminf_{\eps \to 0}
  \calI_\eps(\gamma_\eps,u_\eps,R)
  &\geq
  \liminf_{\eps \to 0}
  \int_{M_\eps(R \cap J_{\eps,\delta})}
  \eps |B_\eps|^2 + \frac{1}{\eps} W(u_\eps) \,d\mu_\eps
  \\
  &\geq
  2 \left( \inf_{u \in [-\delta,\delta]} \sqrt{W(u)} \right)
  \liminf_{\eps \to 0}
  \int_{M_\eps(R \cap J_{\eps,\delta})} |B_\eps| \,d\mu_\eps
  \\
  &=
  2 \left( \inf_{u \in [-\delta,\delta]} \sqrt{W(u)} \right)
  \liminf_{\eps \to 0}
  \int_{M_\eps(R)} |B_\eps| \,d\mu_\eps
  \\  
  &\geq
  4\pi \left( \inf_{u \in [-\delta,\delta]} \sqrt{W(u)} \right)
  \int_R x' \,d t.
\end{align*}
Taking the supremum over $\delta>0$ and combining with the
estimate for $\calH_\eps$ yields
\begin{equation}
  \label{eq:proof:aor-lb}
  \liminf_{\eps \to 0}
  \calF_\eps(\gamma_\eps,u_\eps,R)
  \geq
  \hat\sigma
  \liminf_{\eps \to 0}
  \int_{M_\eps(R)} |B_\eps| \,d\mu_\eps
  \geq
  2 \pi \hat\sigma \int_R x' \,d t.
\end{equation}
Due to $x' \geq 0$ and $y'=0$ in $R$, we have
\begin{equation*}
  2\pi \int_R x' \,d t
  =
  2\pi \int_R |\gamma'| \,d t
  =
  2\pi \calL_\gamma(R)
  =
  \calH^1(M_\gamma(R)),
\end{equation*}
and this concludes the proof of the lower bound
\eqref{eq:proof:lb}.


\subsection{Upper bound inequality}
\label{sec:proof:ub}

This section is devoted to the upper bound inequality
\begin{equation*}
  \limsup_{\eps \to 0} \calF_\eps(\gamma_\eps, u_\eps)
  \leq
  \calF(\gamma,u)
\end{equation*}
whenever $(\gamma,u) \in \calD \times \calQ$ has finite energy and
$\gamma$ is parametrised with constant speed.
We first approximate $(\gamma,u)$ by a sequence of simple membranes in
$\calD \times \calQ$ that have a finite number of components and
finitely many (ghost) interfaces. We construct a recovery sequence for
such a simple membrane by employing essentially local changes to curve
and phase field. A diagonal sequence then recovers $(\gamma,u)$.

Throughout this section we assume that $(\gamma,u) \in \calD \times
\calQ$ has finite energy $\calF(\gamma,u)$ and constant speed
$|\gamma'| \equiv q$ in $I$, without loss of generality $q=1$.
Since the value of $u$ in $\set{y=0}$ does not enter the energy or our
arguments, we assume $u=0$ in $\set{y=0}$.


\subsubsection{Approximation by simple configurations}

\begin{lemma}
  \label{lem:proof:approx-noc-finite}
  Assume that $M_\gamma$ has infinitely many components.
  Then there is a sequence $(\gamma_\delta,u_\delta) \in \calD \times
  \calQ$ that converges to $(\gamma,u)$ in $C^0(I; \bbR^2) \times
  L^1(\set{y>0})$ as $\delta \to 0$ such that
  $\calF(\gamma_\delta,u_\delta) \to \calF(\gamma,u)$ and each
  $M_{\delta}$ has finitely many components.
\end{lemma}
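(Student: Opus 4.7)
The plan is to enumerate the at most countably many components of $\set{y > 0}$, keep finitely many of them for each $\delta > 0$, collapse the remaining ones to straight line segments on the axis of revolution, and finally repair the area and phase-mass constraints by a small, local perturbation of one kept component. The underlying summability comes from three sources: $\sum_k (b_k - a_k) \leq |I| < \infty$ by Lipschitz parametrisation with $|\gamma'| \equiv 1$ on $\set{y>0}$; $\sum_k \calA_\gamma(\omega_k) = A_0 < \infty$; and $\sum_k \calF(\gamma,u,\omega_k) < \infty$, which is absolutely convergent once each term is shifted by $\|H_{s}\|_\infty^2 \calA_\gamma(\omega_k)$ to become nonnegative via the usual lower bound for $\calH$.

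For each $\delta > 0$ I would choose $N = N(\delta)$ so that the tails of all three series are smaller than $\delta$. Writing $\omega_k = (a_k,b_k)$, define a preliminary approximation $(\hat\gamma_\delta,\hat u_\delta)$ by keeping $(\gamma,u)$ on $\set{y=0} \cup \bigcup_{k \leq N} \omega_k$ and replacing $\gamma$ on every $\omega_k$ with $k > N$ by the affine axis segment
\begin{equation*}
  t \mapsto \Bigl( x(a_k) + (t-a_k) \tfrac{x(b_k)-x(a_k)}{b_k-a_k},\; 0 \Bigr),
\end{equation*}
while setting $\hat u_\delta = 0$ there. All structural conditions for $\calD \times \calQ$ (Lipschitz continuity, $|\hat\gamma_\delta'| = 1$ on $\set{\hat y_\delta>0}$ and $\leq 1$ on $\set{\hat y_\delta=0}$, $x'\geq0$, $y\geq0$, $L^2$-bound of $B$ off the kink set) are inherited from $\gamma$. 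Uniform convergence $\hat\gamma_\delta \to \gamma$ and $L^1(\set{y>0})$ convergence $\hat u_\delta \to u$ follow from $\sup_{t\in\omega_k}|\hat\gamma_\delta(t)-\gamma(t)| \leq 2(b_k-a_k)$ and the summability of $(b_k-a_k)$; the Helfrich and interface energies of the kept components converge to $\calF(\gamma,u)$ by absolute summability, while the new axis pieces contribute only $2\pi\hat\sigma \sum_{k>N}(x(b_k)-x(a_k)) \to 0$.

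The only defects of $(\hat\gamma_\delta,\hat u_\delta)$ are an area shortfall $\alpha_\delta := A_0 - \calA_{\hat\gamma_\delta} = \sum_{k>N}\calA_\gamma(\omega_k)$ and a phase shortfall $\beta_\delta := mA_0 - \int\hat u_\delta\,d\mu = \sum_{k>N}\int_{M_\gamma(\omega_k)} u\,d\mu$, both tending to zero and satisfying $|\beta_\delta|\leq\alpha_\delta$. To restore the constraints I would perturb one kept component (reordered so that it contains a proper interface; the degenerate cases $m=\pm 1$ or ``all kept components carry the same constant phase'' are treated by a one-parameter perturbation, respectively by perturbing two components of opposite phase). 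A smooth two-parameter family is sufficient: one parameter dilates the $y$-coordinate of the generating curve on a small regular subinterval of the chosen component, changing the area; the other translates an interior jump of $u$, changing the phase integral. The Jacobian of the induced map from parameters to $(\calA,\int u\,d\mu)$ is invertible at the origin, so by the implicit function theorem parameters of size $O(\alpha_\delta+|\beta_\delta|)\to 0$ realise the two constraints exactly. Because $\calF$ depends continuously on such localised smooth perturbations of a piece with bounded $L^2$-curvature, the induced energy change is $o(1)$.

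The main obstacle I expect is the bookkeeping around this perturbation: verifying the full-rank condition of the parameter-to-constraint map in all configurations (including when no kept component carries a proper interface), ensuring that the perturbed curve remains in $\calD$---in particular that $|\gamma_\delta'|$ stays constant on $\set{y_\delta>0}$, which forces an accompanying reparametrisation of the affected subinterval---and confirming that the perturbation does not spoil the convergence on the kept part of the domain. Once these points are set up, both $\calF(\gamma_\delta,u_\delta)\to\calF(\gamma,u)$ and convergence in $C^0(I;\bbR^2)\times L^1(\set{y>0})$ follow from the three summability facts together with the $o(1)$ perturbation estimate.
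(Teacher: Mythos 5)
Your construction matches the paper's: enumerate the components, keep finitely many, collapse the rest to horizontal segments on the axis of revolution with phase field zero, and use the summability of $\sum \calL_\gamma(\omega_k)$, $\sum\calA_\gamma(\omega_k)$ and $\sum|\calF(\gamma,u,\omega_k)|$ (the last after the shift by $\|H_s\|_\infty^2\calA_\gamma(\omega_k)$, exactly as you note) to get $C^0\times L^1$ convergence and $\calF(\gamma_\delta,u_\delta)\to\calF(\gamma,u)$; the paper's selection rule ``drop all components of length $<\delta$'' is just a concrete choice of your $N(\delta)$.

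The one place you diverge is the constraint repair. The paper restores area and phase mass in two separate, explicit steps rather than a joint implicit-function-theorem argument: a compactly supported $W^{2,2}$-small perturbation of the curve (or, if $\gamma$ is piecewise vertical, a change of the length of two adjacent verticals) for the area, and a small translation of a proper interface for the phase integral. For the interfaceless case the paper does something genuinely different from what you sketch: it \emph{introduces new interfaces} at height $O(\sqrt\delta)$ above the axis and flips the sign of $u_\delta$ below them, exploiting the fact that the interface energy of a new interface scales like $y(s)\to0$. Your alternative of dilating the areas of two kept components of opposite phase is workable, but note that your phrase ``all kept components carry the same constant phase'' cannot literally occur for small $\delta$ once $m\neq\pm1$ (the missing-phase area would then be trapped in the removed tail, whose total area tends to zero), so the case distinction you need is rather ``each kept component has $u$ constant but the constants differ,'' and the two-component perturbation is what covers it. Either mechanism suffices; the paper's has the virtue of being entirely local and avoiding the full-rank bookkeeping you flag as your main concern.
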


\begin{proof}
  Since $\calL_\gamma$, $\calA_\gamma$, and $|\calF(\gamma,u)|$ are
  finite, approximations $(\gamma_\delta,u_\delta)$ can be constructed
  by replacing all components of $(\gamma,u)$, whose curve length is
  less than $\delta$, with a horizontal on the axis of revolution and
  phase field equal to zero.
  Convergence of curves and phase fields as $\delta\to0$ are easily
  checked. The energy difference consists of the total energy of the
  removed components and the interface energy of the new horizontals
  on the axis of revolution. Thus we have
  \begin{equation*}
    |\calF(\gamma,u)-\calF(\gamma_\delta,u_\delta)|
    \leq
    \sum_{\omega: \calL_\gamma(\omega) \leq \delta}
    \Big(
      | \calF(\gamma,u,\omega)| + 2\pi \hat\sigma \calL_\gamma(\omega)
    \Big),
  \end{equation*}
  and both terms on the right hand side converge to $0$ as $\delta \to
  0$. The area difference satisfies
  \begin{equation*}
    \calA_\gamma - \calA_{\gamma_\delta}
    =
    2\pi \sum_{\omega: \calL_\gamma(\omega)\leq\delta}
    \int_\omega |\gamma'| y \,d t
    \leq
    2\pi \delta \sum_{\omega: \calL_\gamma(\omega)\leq\delta}
    \calL_\gamma(\omega)
    =
    o(\delta),
  \end{equation*}
  and it remains to recover the constraints exactly so that
  $(\gamma_\delta,u_\delta)$ is admissible.

  First, if there is an interval $J \Subset \set{y>0} \sm S_\gamma
  \cup S_u$ such that $x'>0$ in $J$, then the corresponding component
  belongs to $(\gamma_\delta,u_\delta)$ for all sufficiently small
  $\delta>0$ and we can add a perturbation that is compactly supported
  in $J$, tends to zero in $W^{2,2}(J;\bbR^2)$ as $\delta \to 0$, and
  recovers the area; if necessary, a reparametrisation fixes the
  constant speed requirement.
  If there is no such interval $J$, then $\gamma$ consists only of
  vertical line segments interrupted by kinks and the area constraint
  is easily established for $M_{\gamma_\delta}$ by adapting the length
  of two adjacent line segments.

  Second, if there is at least one proper interface without kink in
  $(\gamma,u)$ then this interface also belongs to
  $(\gamma_\delta,u_\delta)$ for all sufficiently small $\delta$. It
  can be moved by an order less than $\delta$ and with change in
  energy of the same order to recover the phase integral constraint.
  If $(\gamma,u)$ contains no proper interface, introducing one or a
  finite number of new interfaces at a height of order less than
  $\sqrt{\delta}$ above the axis of revolution and flipping the sign
  of $u_\delta$ below these new interfaces recovers the
  constraint. The change in energy contributed by each new interface
  is proportional to its height above the axis of revolution and thus
  vanishes in the limit $\delta\to0$.
\end{proof}

\begin{lemma}
  \label{lem:proof:approx-noi-finite}
  Assume that $M_\gamma$ has finitely many components.
  Then there is a sequence $(\gamma_\delta,u_\delta) \in \calD \times
  \calQ$ that converges to $(\gamma,u)$ in $C^0(I; \bbR^2) \times
  L^1(\set{y>0})$ as $\delta \to 0$ such that
  $\calF(\gamma_\delta,u_\delta) \to \calF(\gamma,u)$ and
  $\calH^0(S_{\gamma_\delta} \cup S_{u_\delta}) < \infty$.
  Every component $\omega = (a,b)$ of $M_{\gamma_\delta}$ meets the
  axis of revolution in a line perpendicular to it, that is
  $\gamma_\delta' = (0,1)$ near $a$ in $\omega$ and $\gamma_\delta' =
  (0,-1)$ near $b$.
  Two adjacent components are connected by a horizontal segment on the
  axis of revolution.
\end{lemma}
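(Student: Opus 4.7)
My approach is to modify $\gamma$ near each of the finitely many component endpoints by replacing a $\delta$-neighborhood with a broken line: first a horizontal axis segment, then a vertical ascent to meet the remaining curve. This device simultaneously removes the possibly infinite accumulation of kinks and interfaces near the endpoint, enforces perpendicular meeting at the axis by construction, and produces a horizontal connection to the adjacent component.

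Concretely, for each component $\omega_i = (a_i, b_i)$ of $\gamma$ and each small $\delta > 0$, I would select regular parameters $a_{i,\delta} \in (a_i, a_i + \delta) \sm (S_\gamma \cup S_u)$ and $b_{i,\delta} \in (b_i - \delta, b_i) \sm (S_\gamma \cup S_u)$, which exist since $S_\gamma \cup S_u$ is countable. The approximant $\gamma_\delta$ coincides with $\gamma$ on each $[a_{i,\delta}, b_{i,\delta}]$; on $[a_i, a_{i,\delta}]$ it is the concatenation of a horizontal axis segment from $(x(a_i),0)$ to $(x(a_{i,\delta}),0)$ with a vertical segment from there up to $\gamma(a_{i,\delta})$, and symmetrically near $b_i$. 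The phase field $u_\delta$ equals $u$ on each $[a_{i,\delta}, b_{i,\delta}]$, the constant $u(a_{i,\delta}) \in \set{\pm 1}$ along each vertical ascent, and $0$ along each horizontal axis piece. After a reparametrization restoring constant speed in $\set{y_\delta > 0}$, the pair $(\gamma_\delta, u_\delta)$ belongs to $\calD \times \calQ$; since each $[a_{i,\delta}, b_{i,\delta}]$ is compactly contained in $\set{y_\delta > 0}$, the set $S_{\gamma_\delta} \cup S_{u_\delta}$ restricted to it is finite, and finiteness of the number of components gives $\calH^0(S_{\gamma_\delta} \cup S_{u_\delta}) < \infty$. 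The perpendicularity condition is built into the vertical segments, and the horizontal connection between two adjacent components of $M_{\gamma_\delta}$ is the concatenation of the two newly added horizontal pieces with any original axis portion.

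For the energy convergence I would split the change in $\calF$ into four vanishing contributions. The removed energy on $(a_i, a_{i,\delta}) \cup (b_{i,\delta}, b_i)$ tends to zero as $\delta \to 0$ by absolute continuity of $\int_{M_\gamma(\set{y>0} \sm S_\gamma)} |B|^2 \,d\mu$ (finite by definition of $\calD$) and by summability $\sum_{s \in S_\gamma \cup S_u} y(s)(\sigma + \hat\sigma |[\gamma'](s)|) < \infty$ (from $\calI(\gamma,u) < \infty$). Each added horizontal axis segment contributes to $\calI$ at most $2\pi \hat\sigma |x(a_{i,\delta}) - x(a_i)| \leq 2\pi \hat\sigma \delta$; each vertical segment has zero bending energy and no interior interfaces; each new kink at $a_{i,\delta}$ (where the vertical segment joins the remaining curve) contributes at most $2\pi^2 \hat\sigma y(a_{i,\delta}) \leq 2\pi^2 \hat\sigma \delta$, since the Lipschitz bound together with $y(a_i) = 0$ forces $y(a_{i,\delta}) \leq \delta$. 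Uniform convergence $\gamma_\delta \to \gamma$ and $L^1(\set{y>0})$ convergence of $u_\delta$ are immediate since the modifications are supported on shrinking intervals. The area and phase integral are disturbed by $O(\delta)$ and are restored exactly at vanishing energy cost by the local perturbation device used in Lemma~\ref{lem:proof:approx-noc-finite}.

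The main obstacle is bookkeeping rather than a single hard step: one must verify that the four contributions above, together with the constraint corrections, jointly vanish and can be combined into one diagonal sequence. Lemma~\ref{lem:model:weak-continuity} plays a conceptual rather than essential role here, explaining that the original $\gamma$ already approaches the axis in an approximately perpendicular sense so that the vertical replacement is geometrically natural, but the energy estimate above does not require fine control on $\gamma'(a_{i,\delta})$ beyond the trivial bound $|[\gamma_\delta'](a_{i,\delta})| \leq \pi$.
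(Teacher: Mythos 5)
Your construction — replace a $\delta$-neighborhood of each component endpoint by a vertical segment meeting a horizontal axis piece, set the phase field to a constant on the verticals and $0$ on the horizontals, then reparametrize and repair the constraints — is essentially the paper's own proof. Two minor bookkeeping slips do not affect the outcome: the vertical segments carry a nonzero Helfrich contribution $\int H_s(u_\delta)^2\,d\mu_\delta = O(\delta^2)$ even though their bending energy vanishes, and the new kink at $a_{i,\delta}$ is a ghost interface contributing the full $2\pi(\sigma+\hat\sigma|[\gamma'_\delta]|)\,y(a_{i,\delta})$ rather than only the $\hat\sigma$-part, but both remain $O(\delta)$.
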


\begin{proof}
  The approximations are constructed by changing $(\gamma,u)$ in
  segments of length of order $\delta$ around component
  boundaries.
  More precisely, let $\omega = (a,b)$ be a component of $M_\gamma$
  and $\delta>0$ sufficiently small so that $b_\delta = b - \delta \in
  \omega$. In $(b_\delta,b]$ we replace $\gamma$ by 
  \begin{equation*}
    \gamma_\delta(t) =
    \begin{cases}
      ( x(b_\delta), y(b_\delta) - t + b_\delta )
      &\text{if } t \in (b_\delta, \hat b_\delta),
      \\
      ( x(b_\delta) + t - \hat b_\delta, 0 )
      &\text{if } t \in (\hat b_\delta, b),
    \end{cases}
  \end{equation*}
  that is, we move vertically down until we reach the axis of
  revolution at $\hat b_\delta = y(b_\delta) + b_\delta$ and fill the
  remaining interval $(\hat b_\delta,b]$ by moving to the right.
  At the other component boundary $a$ we do the same but with the
  horizontal to the left.

  Making this replacement for every component, shifting remaining
  segments of $\gamma$ slightly in $x$-direction to glue all parts
  together continuously and setting the phase field to, say, $+1$ on
  the new verticals and $0$ on the horizontals we obtain
  $(\gamma_\delta,u_\delta)$ such that $\gamma_\delta \to \gamma$ in
  $C^0(I;\bbR^2)$ and $u_\delta \to u$ in $L^1(I)$.
  Denoting by $M_{orig}$ all parts of $M_\gamma$ that have been
  removed and by $M_{hor}$ and $M_{ver}$ the introduced horizontals
  and verticals, the Helfrich energy difference is bounded by
  \begin{align*}
    |\calH(\gamma,u)-\calH(\gamma_\delta,u_\delta)|
    &\leq
    \int_{M_{orig}} |H_\gamma-H_{s}(u)|^2 + |K_\gamma| \,d\mu
    +
    \int_{M_{ver}} H_{s}(u_\delta)^2 \,d\mu_\delta.
  \end{align*}
  The second term is bounded by $\|H_{s}\|_\infty^2
  \mu_\delta(M_{ver}) \to 0$ as $\delta \to 0$, and the first tends to
  $0$ as $\delta \to 0$ due to $\mu(M_{orig}) \to 0$ and uniform
  continuity of the integral.
  The difference in the interface energy consists of original (ghost)
  interfaces that are omitted in $(\gamma_\delta, u_\delta)$, the two
  probably introduced kinks for each component, and the new pieces on
  the axis of revolution. Therefore, we obtain
  \begin{align*}
    \frac{1}{2\pi}
    |\calI(\gamma,u)-\calI(\gamma_\delta,u_\delta)|
    &\leq
    \sum_{\substack{s \in S_\gamma \cup S_u\\y(s) \leq \delta}}
    (\sigma + \hat \sigma |[\gamma'](s)|)y(s)
    +
    2 N_\gamma (\sigma + \hat\sigma\pi) \delta
    +
    2 N_\gamma \hat\sigma \delta,
  \end{align*}
  where $N_\gamma$ denotes the number of components of $\gamma$. The
  first term converges to $0$ as $\delta \to 0$ because the sum over
  all (ghost) interfaces is finite, thus the energy difference
  vanishes in the limit $\delta \to 0$.
  Since $y_\delta \leq y \leq \delta$ where $\gamma$ has been
  replaced, one easily finds that $|\calA_\gamma - \calA_{\delta}|$ is
  of order $\delta^2$; thus the constraints can be recovered as in
  Lemma~\ref{lem:proof:approx-noc-finite}.

  If necessary, additional minor changes such as adding horizontal
  segments between adjacent components or removing horizontals at
  $\partial I$ can be applied.
\end{proof}

From now on we assume that $(\gamma,u)$ has the form of the
approximations constructed in Lemma \ref{lem:proof:approx-noi-finite}.
As an example, Figure~\ref{fig:proof:curve-d-simplified} shows an
approximation of the curve in Figure~\ref{fig:model:curve-in-d}.

\begin{figure}
  \centering
  \includegraphics[width=.9\textwidth]{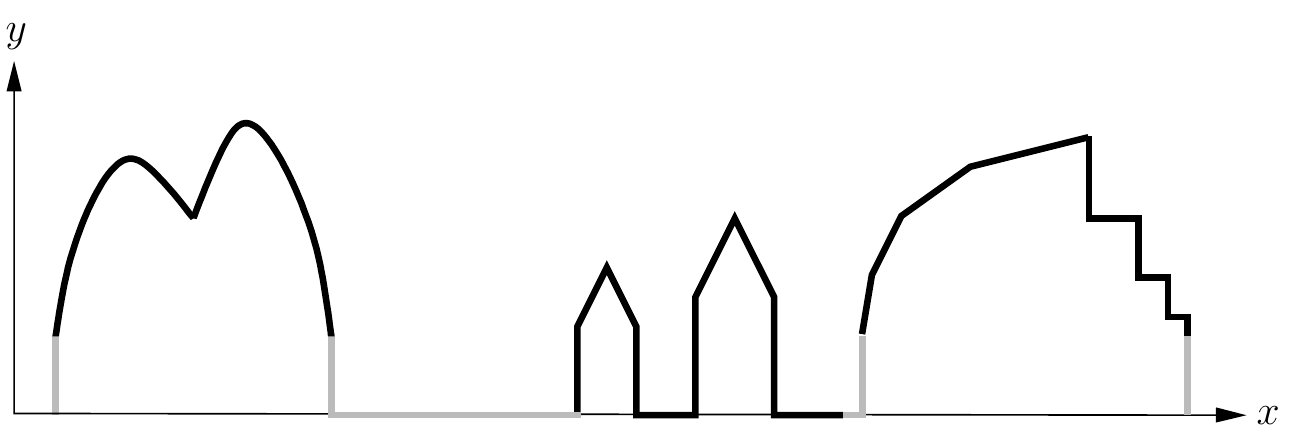}
  \caption{Approximation of the curve in Figure
    \ref{fig:model:curve-in-d}, original segments in black, new parts
    in grey. Small components are removed and the ends of the
    remaining components are replaced by line segments; between two
    adjacent components there is always a small horizontal segment on
    the axis of revolution. The number of (ghost) interfaces is
    finite.}
  \label{fig:proof:curve-d-simplified}
\end{figure}


\subsubsection{Kinks and interfaces}
\label{sec:kink:k-and-i-ub}

Let $s \in S_{\gamma}$ be a kink, $S = S_\gamma \cup S_u$, and $J
\Subset \set{y>0}$ with $\overline J \cap S = \set{s}$. For simplicity
of notation we formulate the following arguments for curves and phase
fields given in an interval $J$ around $s=0$; recall that
$|\gamma'|=1$ in $J$.
First we smooth out kinks by a linear interpolation of the tangent
angle of $\gamma$ around $s=0$. This local procedure disturbs the
constraints and disrupts the curve, so that we have to add some
corrections, one of which is a global shift in $x$-direction
because of the requirement $x' \geq 0$.

\begin{lemma}
  \label{lem:proof:kink-recovery}
  Let $J=(-a,a)$.
  For all sufficiently small $\eps>0$ there is $\gamma_\eps =
  (x_\eps,y_\eps) \in W^{2,2}(J; \bbR^2)$ such that 
  \begin{itemize}
  \item $\gamma_\eps$ satisfies $\inf_J y_\eps >0$ and $x_\eps' \geq
    0$;
  \item $\gamma_\eps$ fits almost into $\gamma$, that is, at the end
    points of $\gamma_\eps(J)$ we have
    \begin{equation*}
      \gamma_\eps(-a) = \gamma(-a),
      \quad
      \gamma_\eps'(-a) = \gamma'(-a),
      \quad
      \gamma_\eps(a) = (x(a) + o(1), y(a)),
      \quad
      \gamma_\eps'(a) = \gamma'(a);
    \end{equation*}
  \item $\gamma_\eps \to \gamma$ in $W^{1,p}(J; \bbR^2)$ for any $p \in
    [1,\infty)$ as $\eps \to 0$;
  \item $\calA_{\eps}(J) = \calA_\gamma(J) + O(\eps)$ and
    $\displaystyle \int_{M_\eps(J)} u \,d\mu_\eps = \int_{M_\gamma(J)}
    u \,d\mu + O(\eps)$ as $\eps \to 0$; and
  \item with $J_\eps=(-\delta_\eps,\delta_\eps)$, where $\delta_\eps =
    \frac{|[\gamma']|}{\hat\sigma} \eps$, there holds
    \begin{equation*}
      \lim_{\eps \to 0} \calI_\eps(\gamma_\eps,0,J_\eps)
      =
      2 \pi \hat\sigma |[\gamma'](0)| y(0).
    \end{equation*}
  \end{itemize}
  Moreover, $\gamma_\eps' = \gamma' + r_\eps$ in $J \sm J_\eps$ where
  $\spt r_\eps \Subset J \sm J_\eps$ is independent of $\eps$ and
  $r_\eps \to 0$ in $W^{1,2}(J;\bbR^2)$ as $\eps \to 0$.
\end{lemma}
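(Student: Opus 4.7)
The plan is to smooth the kink by an affine interpolation of the tangent angle on $J_\eps$ and to add a small compactly supported correction outside $J_\eps$ to restore the exact boundary data and the constraints. Let $\phi \in W^{1,2}(J \sm \{0\})$ be a continuous angle function of $\gamma$ with values in $[-\pi/2, \pi/2]$, which exists because $|\gamma'| = 1$ and $x' \geq 0$, so that $\phi(0^+) - \phi(0^-) = [\gamma'](0) \in [-\pi, \pi]$. I would define $\phi_\eps \in W^{1,2}(J)$ by affinely interpolating between $\phi(-\delta_\eps)$ and $\phi(\delta_\eps)$ on $J_\eps$ and setting $\phi_\eps = \phi$ on $J \sm J_\eps$. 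The interpolation stays in $[-\pi/2, \pi/2]$, has constant derivative on $J_\eps$ of magnitude close to $|[\gamma'](0)|/(2\delta_\eps)$, and joins continuously to $\phi$ at $\pm \delta_\eps$.

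Next, I would set $\tilde\gamma_\eps(t) = \gamma(-a) + \int_{-a}^t (\cos\phi_\eps, \sin\phi_\eps) \,ds$, which has $|\tilde\gamma_\eps'| \equiv 1$, matches $\gamma$ and $\gamma'$ at $t = -a$, coincides with $\gamma'$ on $J \sm J_\eps$, and satisfies $\tilde\gamma_\eps(a) - \gamma(a) = O(\delta_\eps) = O(\eps)$ in both components. To enforce $y_\eps(a) = y(a)$ exactly, fix a bump $\rho \in C^\infty_c(J \sm \{0\})$ with $\int \rho \,dt = 1$, whose support $K$ is independent of $\eps$, disjoint from $J_\eps$ for small $\eps$, and does not contain $\pm a$. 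With $c_\eps$ chosen to cancel the vertical endpoint discrepancy, set $r_\eps = -c_\eps \rho \, (0,1)$ and $\gamma_\eps(t) = \gamma(-a) + \int_{-a}^t (\tilde\gamma_\eps' + r_\eps) \,ds$. The resulting $\gamma_\eps$ agrees with $\tilde\gamma_\eps$ on $J_\eps$, has matching tangents at $\pm a$ (since $\rho$ vanishes there), and preserves $x_\eps' \geq 0$ because the correction is purely vertical. Since $c_\eps = O(\eps)$, we have $r_\eps \to 0$ in $W^{1,2}(J;\bbR^2)$. The remaining properties ($\gamma_\eps \in W^{2,2}(J;\bbR^2)$, $\inf y_\eps > 0$, $W^{1,p}$-convergence, and the $O(\eps)$ estimates on area and phase integral) all follow from the uniform convergence $\gamma_\eps \to \gamma$, the bound $c_\eps = O(\eps)$, and the fact that $\gamma_\eps' - \gamma'$ is supported in $J_\eps \cup K$ with $L^\infty$-norm bounded on $J_\eps$ and of order $\eps$ on $K$.

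The core computation is the interface energy on $J_\eps$ with phase field identically zero: the gradient term vanishes, the potential contributes $W(0)/\eps$, and since $|\gamma_\eps'| = 1$ on $J_\eps$ and $\phi_\eps'$ is constant there, the choice $\delta_\eps = |[\gamma'](0)| \eps / \hat\sigma$ together with $\hat\sigma = 2\sqrt{W(0)}$ gives $\eps \kappa_{1,\eps}^2 \to W(0)/\eps$. The term $\eps \kappa_{2,\eps}^2$ is negligible because $\kappa_{2,\eps}$ is uniformly bounded on $J_\eps$. Integrating the resulting $2W(0)/\eps$ against $2\pi y_\eps$ over $J_\eps$ and using $|J_\eps| = 2\delta_\eps$ together with $y_\eps(0) \to y(0)$ yields the stated limit $2\pi \hat\sigma |[\gamma'](0)| y(0)$. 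The main technical obstacle is matching boundary position, boundary tangent, area, and phase integral simultaneously without perturbing the sharp asymptotics on $J_\eps$; this is resolved by the clean separation between the angle interpolation on $J_\eps$, which carries the kink energy, and the correction $r_\eps$, which is an $O(\eps)$ perturbation supported in a fixed compact set disjoint from $J_\eps$ and from $\pm a$.
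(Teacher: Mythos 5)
Your construction is essentially the paper's: affinely interpolate the tangent angle on $J_\eps$, integrate $(\cos\phi_\eps,\sin\phi_\eps)$ to obtain a unit-speed curve matching $\gamma$ at $-a$ and coinciding with $\gamma'$ off $J_\eps$, then restore the endpoint $y$-value with a vertical bump $r_\eps$ compactly supported away from $J_\eps$ and $\pm a$. The energy computation on $J_\eps$ (potential plus $\eps\kappa_{1,\eps}^2$ balanced by the choice of $\delta_\eps$, with $\eps\kappa_{2,\eps}^2$ negligible) and the $O(\eps)$ bookkeeping for the boundary and constraint errors match the paper's argument.
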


\begin{proof}
  Let $\phi$ be an angle function for $\gamma$ in $J$ that is
  uniformly continuous on either side of $s=0$ and satisfies $|\phi|
  \leq \pi/2$. Denote by $\phi^+$ and $\phi^-$ the one-sided limit of
  $\phi$ at $s=0$ from the right and the left, respectively; then the
  kink carries the ``bending energy'' $2\pi \hat\sigma y(s) |\phi^+ -
  \phi^-|$ and we have $\delta_\eps = |\phi^+-\phi^-|\eps /
  \hat\sigma$.

  In the simple case that $\gamma$ consists of two straight lines in
  $J$, the linear interpolation $\phi_\eps \in W^{1,p}(J)$ of
  $\phi^\pm$ in $J_\eps = (-\delta_\eps,\delta_\eps) \subset J$ is
  given by
  \begin{equation*}
    \phi_\eps(t) =
    \begin{dcases}
      \phi^-
      &\text{if } t < - \delta_\eps,
      \\
      \frac{\phi^+ - \phi^-}{2\delta_\eps} t + \frac{\phi^+ + \phi^-}{2}
      &\text{if } -\delta_\eps \leq t < \delta_\eps,
      \\
      \phi^+
      &\text{if } \delta_\eps \leq t.
    \end{dcases}
  \end{equation*}
  The curve $\gamma_\eps$, defined by $\gamma_\eps' = (\cos\phi_\eps,
  \sin\phi_\eps)$ and $\gamma_\eps(-\delta_\eps) =
  \gamma(-\delta_\eps)$, converges in $W^{1,p}(J;\bbR^2)$ to $\gamma$
  as $\eps \to 0$. Using Young's inequality one easily verifies
  \begin{align}
    \label{eq:proof:two-lines-energy}
    \frac{1}{2\pi}
    \calI_\eps(\gamma_\eps,0,J_\eps)
    &=
    \left(
      \frac{2\delta_\eps}{\eps} W(0) +
      \frac{\eps}{2\delta_\eps} (\phi^+ - \phi^-)^2
    \right)
    \frac{1}{2 \delta_\eps} \int_{-\delta_\eps}^{\delta_\eps} y_\eps \,d t
    +
    \eps \int_{-\delta_\eps}^{\delta_\eps} \kappa_{2,\eps}^2 y_\eps \,d t
    \nonumber
    \\
    &\geq
    \hat\sigma |\phi^+ - \phi^-|
    \frac{1}{2 \delta_\eps} \int_{-\delta_\eps}^{\delta_\eps} y_\eps \,d t
    +
    \eps \int_{-\delta_\eps}^{\delta_\eps} \kappa_{2,\eps}^2 y_\eps \,d t,
  \end{align}
  and by our choice of $\delta_\eps$ we have equality in
  \eqref{eq:proof:two-lines-energy}.
  The first integral in~\eqref{eq:proof:two-lines-energy} divided by
  $2\delta_\eps$ converges to $y(0)$ as $\eps \to 0$, and the second
  term vanishes because the integral of $\kappa_{2,\eps}^2 y_\eps$ is
  bounded. Thus $\calI_\eps(\gamma_\eps,0,J_\eps) \to 2\pi \hat\sigma
  |[\gamma']| y(0)$ as desired.

  For a general angle function $\phi$ the interpolation is
  \begin{equation*}
    \phi_\eps(t) =
    \begin{dcases}
      \phi(t)
      &\text{if }  |t| > \delta_\eps, \\
      \frac{\left( \phi(\delta_\eps)-\phi(-\delta_\eps)
        \right)}{2\delta_\eps} t +
      \frac{\left( \phi(\delta_\eps)+\phi(-\delta_\eps)
        \right)}{2}
      &\text{if } |t| \leq \delta_\eps,
    \end{dcases}
  \end{equation*}
  see Figure~\ref{fig:proof:kink-recovery}, and similarly as above we
  get
  \begin{align*}
    \frac{1}{2\pi}
    \calI_\eps(\gamma_\eps,0,J_\eps)
    &=
    \sqrt{W(0)}
    \left(
      |[\phi]| + \frac{|\phi(\delta_\eps)-\phi(-\delta_\eps)|^2}{|[\phi]|}
    \right)
    \frac{1}{2\delta_\eps} \int_{-\delta_\eps}^{\delta_\eps} y_\eps \,d t
    +
    \eps \int_{-\delta_\eps}^{\delta_\eps} \kappa_{2,\eps}^2 y_\eps \,d t
    \\
    &\to
    \hat\sigma |[\gamma']| y(0).
  \end{align*}
  By construction, $\phi_\eps \in [-\pi/2,\pi/2]$, that is $x_\eps'
  \geq 0$, and $|\gamma_\eps'| \equiv 1$ in $J$. Also, $\gamma_\eps
  \to \gamma$ in $W^{1,p}(J; \bbR^2)$ because $\phi_\eps \to \phi$ in
  $L^p(J)$ for any $p \in [1,\infty)$. Therefore, $y_\eps \geq \inf_J
  y/2 > 0$ for all sufficiently small $\eps$.

  \begin{figure}
    \centering
    \includegraphics[width=.4\textwidth]{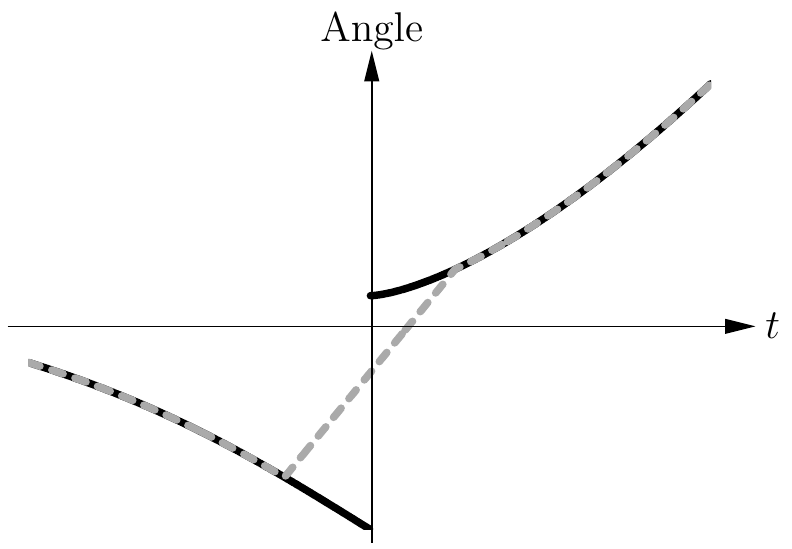}
    \hspace*{.05\textwidth}
    \includegraphics[width=.4\textwidth]{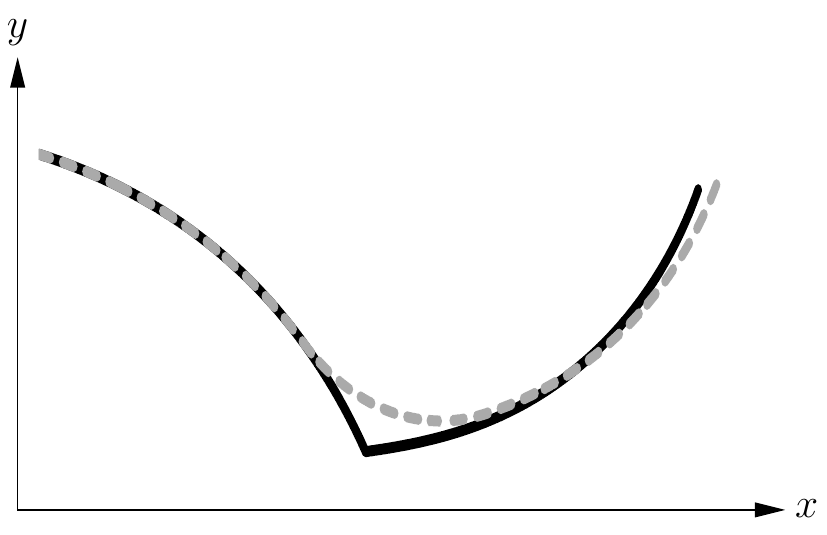}
    \caption{Linear interpolation of the tangent angle on the left and
      the corresponding curve on the right. Original angle and curve
      are black, interpolations grey.}
    \label{fig:proof:kink-recovery}
  \end{figure}

  It remains to correct the $y$-coordinate of the right end point of
  $\gamma_\eps(J)$ and to calculate the error in the area and phase
  integral constraint.
  We fix $\widetilde J \Subset J \sm J_\eps$ independently of all
  sufficiently small $\eps$ and $f \in C_c^\infty(\widetilde J)$ such
  that $f\geq0$ and $\int_{\widetilde J} f \,d t = 1$. The perturbed
  curve $\widetilde \gamma_\eps = (x_\eps, y_\eps + \alpha_\eps F)$,
  where $F(t) = \int_{-a}^t f(s) \,d s$, has the desired end point
  $y$-coordinate for
  \begin{equation*}
    \alpha_\eps = y(a) - y_\eps(a).
  \end{equation*}
  Since
  \begin{equation*}
    |\gamma_\eps(t) - \gamma(t)|
    \leq
    \int_{-\delta_\eps}^{\delta_\eps} |\gamma_\eps' - \gamma'| \,d s
    \leq
    4 \delta_\eps
    =
    O(\eps),
  \end{equation*}
  also $\alpha_\eps$, $\|\widetilde \gamma_\eps - \gamma\|_\infty$, 
  and $\| \widetilde \gamma_\eps' - \gamma' \|_{L^\infty(J \sm
    J_\eps)}$ are at most of order $\eps$. The claims for area and
  phase constraint follow, and $r_\eps = (0, \alpha_\eps f)$.
\end{proof}

Next we construct a recovery sequence for the phase field in $J$ which
is in line with $\gamma_\eps$ of Lemma~\ref{lem:proof:kink-recovery}.
It is well known, see for instance~\cite{Alberti00}, that in the
classical one-dimensional Modica-Mortola setting the optimal
$\eps$-energy profile for a transition of $u_\eps$ from $-1$ to $+1$
is obtained by minimising
\begin{equation*}
  G_\eps(u) = \int_{\bbR} \eps |u'|^2 + \frac{1}{\eps} W(u) \,d t
\end{equation*}
among functions $u$ that satisfy $u(0)=0$ and $u(\pm\infty) = \pm
1$. Indeed, setting $u_\eps(t) = u(t/\eps)$, we find
\begin{equation*}
  G_\eps(u_\eps) = G_1(u)
  \geq
  2\int_{\bbR} \sqrt{W(u)}u' \,d t
  =
  2 \int_{\bbR} \sqrt{W(u)} \,d u
  =
  \sigma.
\end{equation*}
Equality holds if and only if
\begin{equation}
  \label{eq:proof:phase-field-equation}
  u'=\sqrt{W(u)}, 
\end{equation}
which admits a local solution $p$ with initial condition $p(0)=0$
because $\sqrt{W(\cdot)}$ is continuous. Obviously, the constants $+1$
and $-1$ are a global super- and sub-solution of
\eqref{eq:proof:phase-field-equation}, hence $p$ can be extended to
the whole real line.  Since $W(p)>0$ for $p\in(-1,+1)$, $p(t)$
converges to $\pm1$ as $t\to\pm\infty$. Thus $p(t/\eps)$ minimises
$G_\eps$, and due to the symmetry of $W$ we may assume $-p(-t)=p(t)$.

The building block $p_\eps$ of our phase field recovery is given by
\begin{equation*}
  p_\eps(t) =
  \begin{cases}
    0
    &\text{if } 0 \leq t \leq \delta_\eps, \\
    p\left( \frac{t-\delta_\eps}{\eps} \right)
    &\text{if } \delta_\eps < t \leq \delta_\eps+\sqrt{\eps}, \\
    p(1/\sqrt{\eps}) + \tfrac{1}{\eps} \left(
      t-\delta_\eps-\sqrt{\eps} \right)
    &\text{if } \delta_\eps+\sqrt{\eps} < t \leq \delta_\eps+\sqrt{\eps}
    + \eps \left(
      1-p(1/\sqrt{\eps}) \right), \\
    1
    &\text{if } \delta_\eps+\sqrt{\eps} + \eps \left(
      1-p(1/\sqrt{\eps}) \right) < t,
  \end{cases}
\end{equation*}
which connects $p_\eps=0$ and $p_\eps=1$ by an appropriately scaled
optimal profile and a linear segment. In addition, there is a
``plateau'' $\set{p_\eps=0}$ to smooth out the kink, see Figure
\ref{fig:proof:p_eps}.
In the following lemma we estimate the interface energy of
$\gamma_\eps$ combined with a suitable phase field $u_\eps$ based on
$p_\eps$.

\begin{figure}
  \centering
  \includegraphics{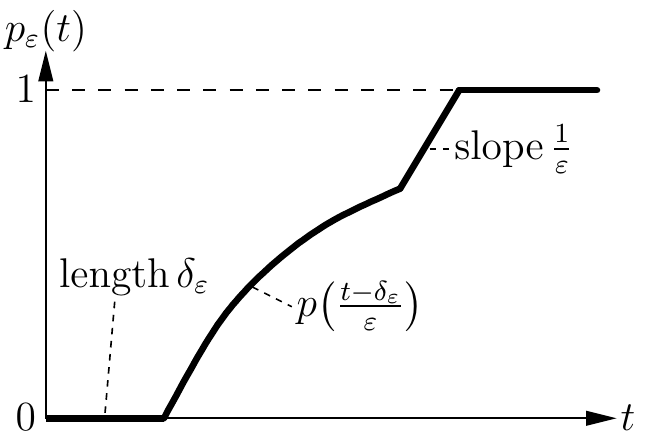}
  \caption{Construction of $p_\eps$ consisting of a ``plateau'' for
    the curve recovery, the optimal profile, and the connection to
    $1$.}
  \label{fig:proof:p_eps}
\end{figure}

\begin{lemma}
  \label{lem:proof:phase-recovery}
  Let $\gamma_\eps$ be as in Lemma \ref{lem:proof:kink-recovery}. Then
  there exists $u_\eps \in W^{1,p}(J)$ such that
  $\|u_\eps\|_{L^\infty(J)} \leq C_0$, $u_\eps = u$ on $\partial J$,
  $u_\eps \to u$ in $L^p(J)$ for any $p \in [1,\infty)$,
  $\int_{M_\eps(J)} u_\eps \,d\mu_\eps = \int_{M_\gamma(J)} u \,d\mu +
  o(\sqrt{\eps})$, and
  \begin{align*}
    \limsup_{\eps \to 0} \calI_\eps(\gamma_\eps,u_\eps,J)
    &\leq
    2\pi ( \sigma + \hat\sigma |[\gamma'](0)| ) y(0),
    \\
    \limsup_{\eps \to 0} \calH_\eps(\gamma_\eps,u_\eps,J)
    &\leq
    \calH(\gamma,u,J).
  \end{align*}
\end{lemma}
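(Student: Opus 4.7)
I would build $u_\eps$ by splicing scaled copies of the profile $p_\eps$ defined above. The construction depends on the type of the point $s=0$. If $s\in S_u$ is a proper interface, so that $u$ jumps from $-1$ to $+1$, set $u_\eps(t)=p_\eps(t)$ for $t\geq 0$ and $u_\eps(t)=-p_\eps(-t)$ for $t<0$. If $s\in S_\gamma\setminus S_u$ is a ghost interface with $u\equiv c\in\{\pm 1\}$ near $0$, set $u_\eps(t)=c\,p_\eps(|t|)$, producing two half-transitions from $0$ to $c$. In either case $\|u_\eps\|_\infty\leq 1\leq C_0$, the plateau of $p_\eps$ guarantees $u_\eps\equiv 0$ on the interpolation interval $J_\eps=(-\delta_\eps,\delta_\eps)$, and $u_\eps$ equals $u$ near $\partial J$ once $\eps$ is so small that $\delta_\eps+\sqrt{\eps}+\eps(1-p(1/\sqrt{\eps}))<a$. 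Pointwise convergence of $p(\cdot/\eps)$ to $\pm 1$ together with the uniform bound gives $u_\eps\to u$ in every $L^p(J)$. The phase integral error is $o(\sqrt{\eps})$: the plateau contributes $O(\delta_\eps\,y(0))=O(\eps)$, and the optimal-profile region contributes $\eps\int_0^{1/\sqrt{\eps}}(1-p(\tau))\,d\tau=O(\eps)$ since $1-p$ is integrable on the real line.

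\textbf{Energy estimates.} I split $J=J_\eps\cup(J\setminus J_\eps)$. On $J_\eps$ we have $u_\eps\equiv 0$, hence $\calH_\eps(\gamma_\eps,u_\eps,J_\eps)=0$ and $\calI_\eps(\gamma_\eps,u_\eps,J_\eps)=\calI_\eps(\gamma_\eps,0,J_\eps)\to 2\pi\hat\sigma|[\gamma'](0)|y(0)$ by the last item of Lemma~\ref{lem:proof:kink-recovery}. On $J\setminus J_\eps$, the gradient of $u_\eps$ is supported on a transition window of width $\sqrt{\eps}+O(\eps)$ adjacent to $\pm\delta_\eps$. Using the equation $p'=\sqrt{W(p)}$, Young's inequality is tight on the optimal-profile piece, so that
\begin{equation*}
  \int_{\delta_\eps}^{\delta_\eps+\sqrt{\eps}}\Bigl(\eps |u_\eps'|^2+\tfrac{1}{\eps}W(u_\eps)\Bigr)y_\eps\,dt
  =\int_0^{p(1/\sqrt{\eps})}2\sqrt{W(u)}\,du\cdot y(0)+o(1)\to\tfrac{\sigma}{2}y(0),
\end{equation*}
while the linear cap of length $O(\eps)$ contributes $o(1)$ since $W$ is $O(\eps^2)$ there. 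Summing the two transitions (one on each side of $J_\eps$, of either sign) gives total interface contribution on $J\setminus J_\eps$ equal to $2\pi\sigma y(0)+o(1)$. Adding the $J_\eps$ contribution yields the desired bound $2\pi(\sigma+\hat\sigma|[\gamma'](0)|)y(0)$.

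\textbf{Helfrich bound and main obstacle.} On $J\setminus J_\eps$ the curve satisfies $\gamma_\eps=\gamma+r_\eps$ with $r_\eps\to 0$ in $W^{1,2}(J;\bbR^2)$ and with $\spt r_\eps$ fixed, so $H_\eps\to H$ and $K_\eps\to K$ in $L^2$ on that set; combined with $u_\eps\to u$ in $L^p$, $\|u_\eps\|_\infty\leq C_0$, and continuity of $H_s$, dominated convergence gives $\calH_\eps(\gamma_\eps,u_\eps,J\setminus J_\eps)\to\calH(\gamma,u,J\setminus\{0\})=\calH(\gamma,u,J)$. Adding the trivial contribution $\calH_\eps(\gamma_\eps,u_\eps,J_\eps)=0$ finishes the Helfrich estimate. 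The delicate point is the coupling of the plateau width $2\delta_\eps=2|[\gamma']|\eps/\hat\sigma$ with the Modica--Mortola profile: the plateau must be short enough that the potential cost $2\delta_\eps W(0)/\eps\cdot y(0)$ produces precisely the bending coefficient $\hat\sigma|[\gamma'](0)|$ (cancelling the bending coefficient $\eps\int\kappa_{2,\eps}^2$) but long enough to absorb the angle interpolation of $\gamma_\eps$ in a region where $u_\eps\equiv 0$, keeping the Helfrich density identically zero there. The choice $\delta_\eps=|[\gamma']|\eps/\hat\sigma$ from Lemma~\ref{lem:proof:kink-recovery} is exactly the equality case in Young's inequality that makes this balance work without leakage into either energy.
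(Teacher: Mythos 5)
Your construction and estimates match the paper's proof step for step: the same spliced profile $p_\eps$ with a plateau on $J_\eps$, the same split $J = J_\eps \cup (J\setminus J_\eps)$ invoking Lemma~\ref{lem:proof:kink-recovery} on the plateau and tight Young's inequality on the transition windows, and the same dominated-convergence argument for the Helfrich part. The one item you leave implicit is the contribution of $\eps\int_{J\setminus J_\eps}|B_\eps|^2\,d\mu_\eps$ to $\calI_\eps$, which the paper notes vanishes because $\gamma_\eps\to\gamma$ in $W^{2,2}(J\setminus J_\eps)$; also, in your closing remark the balance at the kink is between the potential $W(0)/\eps$ and the meridional curvature $\eps\kappa_{1,\eps}^2$, not $\eps\kappa_{2,\eps}^2$ (the latter simply vanishes on the plateau), but neither point affects the validity of the proof.
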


\begin{proof}
  If $s=0$ is a proper interface let
  \begin{equation*}
    u_\eps(t) = 
    \begin{cases}
      p_\eps(t)   &\text{if } 0 \leq t, \\
      -p_\eps(-t) &\text{if } 0 > t,
    \end{cases}
  \end{equation*}
  if $u(t) = \sign t$ in $J$, and the negative of it, if $u(t) = -
  \sign t$; for a ghost interface take the combination of $p_\eps(t)$
  and $p_\eps(-t)$ or its negative. Obviously, $u_\eps \to u$ in
  $L^p(J)$, $|u_\eps| \leq C_0$ in $J$, and $u_\eps = u$ on $\partial
  J$. For the energy estimates we assume $u(t) = \sign t$, the proof
  of the other cases works with the obvious changes.

  Due to $u_\eps \equiv 0$ in $J_\eps=(-\delta_\eps,\delta_\eps)$,
  Lemma \ref{lem:proof:kink-recovery} provides
  \begin{equation*}
    \limsup_{\eps \to 0}
    \calI_\eps(\gamma_\eps,u_\eps,J_\eps)
    \leq
    2\pi \hat\sigma |[\gamma'](0)| y(0).
  \end{equation*}
  Since $\gamma \in W^{2,2}(J\sm\set{0}; \bbR^2)$ and $\gamma_\eps =
  \gamma + o(1)$ in $W^{2,2}(J \sm J_\eps; \bbR^2)$, the curvature
  term in $\calI_\eps(\gamma_\eps,u_\eps, J \sm J_\eps)$ vanishes in
  the limit $\eps \to 0$. The other terms are easily estimated by
  \begin{align*}
    \frac{1}{2\pi}
    \int_{M_{\eps}(J \cap \set{t>\delta_\eps})}
    \eps |\snabla[M_{\eps}]{p_\eps}|^2
    + \frac{1}{\eps} W(p_\eps) \,d\mu_\eps
    &\leq
    \left(\sup_{[\delta_\eps,\delta_\eps+\sqrt{\eps}]} y_\eps \right)
    \int_0^{1/\sqrt{\eps}} |p'(t)|^2 + W(p(t)) \,d t
    \\
    &\quad
    +
    \|y_\eps\|_\infty \left(1-p\left(1/\sqrt{\eps}\right)\right)
    ( 1 + \sup_{[0,1]} W )
  \end{align*}
  on the positive side of $s=0$, and similarly on the other side. The
  second term on the right hand side vanishes in the limit $\eps\to0$
  because $p(1/\sqrt{\eps}) \to 1$, while in the first term the
  integral is bounded by $\sigma/2$ and the supremum converges to
  $y(0)$.
  Hence, taking the limit superior as $\eps \to 0$ proves the upper
  bound for $\calI_\eps(\gamma_\eps,u_\eps,J)$.
  The estimate for the Helfrich energy follows from
  $\calH_\eps(\gamma_\eps,u_\eps,J) = \calH_\eps(\gamma_\eps,u_\eps, J
  \sm J_\eps)$, using convergence of $u_\eps$ and $\gamma_\eps \chi_{J
    \sm J_\eps}$.
  Finally, one easily sees that
  \begin{equation*}
    \left|
      \int_{M_\gamma(J)} u \,d\mu - \int_{M_\eps(J)} u_\eps \,d\mu_\eps
    \right|
    \lesssim
    \delta_\eps +
    \int_{\delta_\eps}^{\delta_\eps+\sqrt{\eps}} (1-p_\eps) \,d t +
    \eps \left(1-p(1/\sqrt{\eps})\right),
  \end{equation*}
  and since
  \begin{equation*}
    \int_{\delta_\eps}^{\delta_\eps+\sqrt{\eps}} (1-p_\eps) \,d t
    =
    \sqrt{\eps} \int_0^1 1-p(t/\sqrt{\eps}) \,d t
    =
    o(\sqrt{\eps}),
  \end{equation*}
  the phase integral difference is also of order $\sqrt{\eps}$.
\end{proof}


\subsubsection{Axis of revolution}

Let $J_0 \subset \set{y=0}$ be an interval that is enclosed by two
intervals $J_l$, $J_r$ such that $(J_l \cup J_0 \cup J_r) \cap S =
\emptyset$,
\begin{equation*}
  \gamma'(t) =
  \begin{cases}
    (0,-1) & \text{in } J_l,
    \\
    (1,0) & \text{in } J_0,
    \\
    (0,1) &  \text{in } J_r,
  \end{cases}
\end{equation*}
and $|u| = 1$ in $J_l \cup J_r$.
The limit energy in $J_0$ is $\calF(\gamma,u,J_0) = 2\pi \hat\sigma
\calH^1(J_0)$, and this is easily recovered by setting $u_\eps = 0$
and $\gamma_\eps = \gamma + (0, 2\eps/\hat\sigma)$ in $J_0$ because
\begin{equation*}
  \calI_\eps(\gamma_\eps,u_\eps,J_0)
  =
  2\pi \int_{J_0}
  \left( \frac{1}{\eps} W(0) + \eps \kappa_{2,\eps}^2 \right) y_\eps \,d t
  =
  2\pi \int_{J_0} \frac{2}{\hat\sigma} W(0) + \frac{\hat\sigma}{2} \,d t
  =
  2\pi \hat\sigma \calH^1(J_0).
\end{equation*}
In $J_l$ and $J_r$ we use the same construction as for kinks. If for
simplicity of notation $J_r = (0,a)$, $\gamma(0) = (0,0)$, and $u
\equiv 1$ in $J_r$, we consider the approximate curve $\gamma_\eps$
given by $\gamma_\eps(0)=(0,2\eps/\hat\sigma)$ and the angle function
\begin{equation*}
  \phi_\eps(t) =
  \begin{dcases}
    \frac{\pi}{2\alpha\eps} t
    &\text{if } 0<t<\alpha\eps,
    \\
    \frac{\pi}{2}
    &\text{if } t>\alpha\eps
  \end{dcases}
\end{equation*}
together with the phase field $p_\eps$ for $\delta_\eps = \alpha\eps$.
The purpose of $\alpha = 2\pi/(\hat\sigma (\pi-2))$ is to ensure
$y_\eps(t) = t = y(t)$ for $t \geq \alpha\eps$.
Thanks to $y_\eps \geq 2\eps/\hat\sigma$ in $J_r$, we have
\begin{equation*}
  \frac{1}{2\pi}
  \int_{M_{\eps}(J_r)} \eps \kappa_{2,\eps}^2 \,d\mu_\eps
  =
  \eps \int_0^{\alpha\eps} \frac{x_\eps'^2}{y_\eps} \,d t
  \leq
  \frac{\hat\sigma}{2} \int_0^{\alpha\eps} x_\eps'^2 \,d t
  \leq
  \frac{\hat\sigma}{2} \alpha\eps,
\end{equation*}
and since the computations for all other terms of $\calI_\eps$ from
Section \ref{sec:kink:k-and-i-ub} still apply, we conclude
$\calI_\eps(\gamma_\eps,p_\eps,J_r) \to 0 = \calI(\gamma,u,J_r)$.
For the Helfrich energy we use that $\gamma_\eps$ is a vertical line
where $p_\eps \not= 0$ in $J_r$ and obtain
\begin{align*}
  \calH_\eps(\gamma_\eps,p_\eps,J_r)
  &=
  \calH_\eps((x_\eps,t),p_\eps,(\alpha\eps,a))
  =
  \int_{M_\eps(\alpha\eps,a)} p_\eps^2 H_{s}(p_\eps)^2 \,d\mu_\eps
  \\
  &\to
  \int_{M(J_r)} H_{s}(u)^2 \,d\mu
  =
  \calH(\gamma,u,J_r),
\end{align*}
where $x_\eps \equiv \int_0^{\alpha\eps} \cos p_\eps \,d t = 2 \alpha
\eps / \pi$.
The change in area when replacing $\gamma$ by $\gamma_\eps$ is of
order $\eps$, and the difference in the phase integral is of order
$o(\sqrt{\eps})$ as in Lemma \ref{lem:proof:phase-recovery}. A similar
construction applies to $u=-1$ and in $J_l$.


\subsubsection{Recovery of simple configurations}

\begin{corollary}
  Let $(\gamma,u) \in \calD \times \calQ$, $|\gamma'|=\mathrm{const}$
  in $I$, be a simple membrane as constructed in Lemma
  \ref{lem:proof:approx-noi-finite}.
  Then there exists a sequence $(\gamma_\eps,u_\eps) \in \calC
  \times \calP$ such that $(\gamma_\eps,u_\eps) \to (\gamma,u)$
  in $C^0(I; \bbR^2) \times L^1(\set{y>0})$ and $\limsup_{\eps
    \to 0} \calF_\eps(\gamma_\eps, u_\eps) \leq \calF(\gamma,u)$.
\end{corollary}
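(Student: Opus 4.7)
The plan is to assemble the local recovery sequences from the previous three subsections into a single pair $(\gamma_\eps,u_\eps)$, correct the small errors in the area and phase integral constraints, and finally reparametrise by constant speed. Since $(\gamma,u)$ is simple, the set $S = S_\gamma \cup S_u$ is finite, every component of $M_\gamma$ meets the axis of revolution perpendicularly, and adjacent components are joined by a horizontal segment on the axis. Therefore $I$ decomposes into finitely many disjoint open intervals of three types: (i) neighbourhoods $J^{(i)}$ around each $s \in S$ as in Section \ref{sec:kink:k-and-i-ub}; (ii) triples $(J_l^{(j)}, J_0^{(j)}, J_r^{(j)})$ around each horizontal segment on the axis, handled in the previous subsection; and (iii) the remaining intervals, on which $\gamma \in W^{2,2}$ and $u \equiv \pm 1$.

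On the type (i) intervals I apply Lemmas \ref{lem:proof:kink-recovery} and \ref{lem:proof:phase-recovery} to obtain $(\gamma_\eps^{(i)}, u_\eps^{(i)})$; on the type (ii) intervals I use the constructions from the axis-of-revolution subsection; and on the type (iii) intervals I simply set $\gamma_\eps = \gamma$ and $u_\eps = u$. The pieces are concatenated into one curve by a global horizontal shift after each modified interval, which is legitimate because the energies, the area and the phase integral are all translation-invariant in $x$. The resulting $\gamma_\eps$ is in $C^{0,1}(I;\bbR^2) \cap W^{2,2}_\loc(I;\bbR^2)$ with $|\gamma_\eps'|\equiv 1$ off the finitely many modification intervals, $x_\eps' \geq 0$, $y_\eps > 0$ in $I$ and $y_\eps(\partial I)=\{0\}$; moreover $u_\eps \in W^{1,1}_\loc(I)$ with $\|u_\eps\|_\infty\leq C_0$. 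The local estimates give
\begin{equation*}
  \limsup_{\eps\to 0} \calF_\eps(\gamma_\eps,u_\eps)
  \leq \sum_{i}\calF(\gamma,u,J^{(i)})
  + \sum_{j}\calF(\gamma,u,J_0^{(j)})
  + \calH(\gamma,u,\text{type (iii)})
  = \calF(\gamma,u),
\end{equation*}
and $(\gamma_\eps,u_\eps) \to (\gamma,u)$ in $C^0(I;\bbR^2)\times L^1(\{y>0\})$.

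It remains to enforce the exact constraints $\calA_{\gamma_\eps}=A_0$ and $\int_{M_\eps} u_\eps \,d\mu_\eps = m A_0$ and restore constant speed globally. Each local construction introduces an $O(\eps)$ error in area and an $o(\sqrt\eps)$ error in the phase integral. Since $(\gamma,u)$ contains at least one type (iii) interval where $x'>0$ (otherwise the argument at the end of the proof of Lemma \ref{lem:proof:approx-noc-finite} applies), I fix such an interval $J^\ast$ and add a normal perturbation of $\gamma$ of amplitude $O(\eps)$, compactly supported in $J^\ast$, to cancel the area error; this affects the Helfrich energy by $o(1)$ because $\gamma \in W^{2,2}(J^\ast)$ and the perturbation is small in $W^{2,2}$. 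For the phase integral I pick a proper interface inside some $J^{(i)}$ (or, failing that, introduce one at small height as in the last paragraph of Lemma \ref{lem:proof:approx-noc-finite}) and translate it by an amount of order $o(\sqrt\eps)$, which adjusts the integral by the required $o(\sqrt\eps)$ correction at vanishing cost in energy. Finally, a reparametrisation $\gamma_\eps \circ \psi_\eps^{-1}$ with $\psi_\eps$ as in the proof of the Corollary at the end of Section \ref{sec:model} gives constant speed in all of $I$ while leaving the energy, area and phase integral unchanged and preserving convergence in $C^0$ and $L^1(\{y>0\})$.

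The main obstacle, as everywhere in this proof, is to ensure that the compatibility conditions at the interfaces between adjacent modification intervals are met: the recovered curve must remain globally Lipschitz with $x_\eps' \geq 0$ and strictly positive $y_\eps$ in $\{y>0\}$, and the phase field must be globally in $W^{1,1}_\loc$. This is why Lemma \ref{lem:proof:kink-recovery} is formulated with matching boundary data $\gamma_\eps(\pm a)=\gamma(\pm a)$ (up to the unavoidable $o(1)$ in the $x$-coordinate, absorbed by the global shift) and $\gamma_\eps'(\pm a)=\gamma'(\pm a)$, and why $u_\eps=u$ on $\partial J$ in Lemma \ref{lem:proof:phase-recovery}. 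With these matching conditions the concatenation is immediate, and the small constraint corrections above complete the construction.
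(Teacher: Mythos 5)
Your overall structure matches the paper's proof: decompose $I$ into finitely many intervals around kinks and interfaces, around the horizontal segments on the axis, and the unchanged remainder; apply the local recovery lemmas on the first two types, leave the third unchanged; concatenate via $x$-shifts; correct the constraints; and finally reparametrise to constant speed. The area correction by a compactly supported normal perturbation on an unchanged interval and the reparametrisation at the end also both match the paper's route.

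Where you diverge --- and where there is a genuine gap --- is the phase integral correction. The paper enforces the constraint by adding a smooth perturbation $\alpha_\eps h$ to the phase field, with $h$ compactly supported in an interval where $(\gamma,u)$ is unchanged up to an $x$-shift and $\int_{M_\gamma} h\,d\mu=1$; choosing $\alpha_\eps = mA_0-\int_{M_\eps}u_\eps\,d\mu_\eps=o(\sqrt\eps)$ enforces the constraint exactly, and the energy cost is controlled via $W(\pm1+\alpha_\eps h)/\eps=O(\alpha_\eps^2/\eps)=o(1)$ together with the boundedness of $|B|^2$ on that interval. You instead propose to translate an interface by $\tau_\eps=o(\sqrt\eps)$. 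This is harmless at a proper interface \emph{without} kink, since there $\gamma_\eps=\gamma$ near $s$ and the change in $\int u_\eps^2|B_\eps|^2\,d\mu_\eps$ is $o(1)$ by absolute continuity of $\int|B|^2\,d\mu$. But at a kinked interface $s\in S_u\cap S_\gamma$ the construction of Lemmas~\ref{lem:proof:kink-recovery} and \ref{lem:proof:phase-recovery} deliberately places the plateau $\set{u_\eps=0}$ over the interval $J_\eps$ where $|B_\eps|^2\sim\eps^{-2}$. Shifting $u_\eps$ by $\tau_\eps$ while leaving $\gamma_\eps$ in place misaligns them: on the uncovered interval of length $\tau_\eps$ one has $u_\eps\neq0$ and $|B_\eps|^2\sim\eps^{-2}$, so the contribution to $\int u_\eps^2|B_\eps|^2\,d\mu_\eps$ is of order at least $\tau_\eps/\eps^2$ (or $\tau_\eps^3/\eps^4$ when $\tau_\eps\ll\eps$), which blows up for $\tau_\eps=o(\sqrt\eps)$. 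A simple configuration need not contain any kink-free proper interface, so this case cannot in general be avoided; and your fall-back of introducing a new interface at small height interferes with the axis-of-revolution recovery, which was built with $u_\eps\equiv0$ there, and is not worked out. Adopting the paper's $\alpha_\eps h$ correction sidesteps all of this and is the cleaner choice.
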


\begin{proof}
  We obtain a sequence $(\gamma_\eps,u_\eps)$ that converges in energy
  and approximates $(\gamma,u)$ in $C^0(I;\bbR^2) \times
  L^1(\set{y>0})$ by combining the local approximations for kinks,
  interfaces, and the axis of revolution with the unchanged parts of
  $(\gamma,u)$, taking into account possible $x$-shifts to join
  segments continuously.
  This sequence satisfies $\calA_{\eps} = \calA_\gamma + O(\eps) = A_0
  + O(\eps)$ and $\int_{M_{\eps}} u_{\eps} \,d\mu_{\eps} = m A_0 +
  o(\sqrt{\eps})$, and the area constraint is recovered as in Lemma
  \ref{lem:proof:approx-noc-finite}. For the phase integral let $h
  \colon J \to \bbR$ be smooth, have compact support in an interval
  $J$, where $(\gamma,u)$ is unchanged except for an $x$-shift, and
  satisfy $\int_{M_\gamma} h \,d\mu = 1$. Then $u_{\eps} +
  \alpha_{\eps} h$ satisfies the constraint if
  \begin{equation*}
    \alpha_{\eps}
    =
    \int_{M_\gamma} u \,d\mu
    -
    \int_{M_{\eps}} u_{\eps} \,d\mu_{\eps}
    =
    m A_0
    -
    \int_{M_{\eps}} u_{\eps} \,d\mu_{\eps}.
  \end{equation*}
  Convergence of $u_{\eps} + \alpha_{\eps} h \to u$ in $L^p(J)$ as
  $\eps\to0$ and of the Helfrich energy are obvious. Since
  $\alpha_{\eps}$ is of order $o(\sqrt{\eps})$, also the interface
  energy $\calI(\gamma_{\eps},u_{\eps}+\alpha_{\eps} h)$ still
  converges to $\calI(\gamma,u)$, thanks to
  \begin{equation*}
    \frac{1}{\eps} W(\pm1 + \alpha_{\eps} h)
    =
    \frac{1}{\eps} \left(
      W(\pm1) + \alpha_{\eps} h W'(\pm1) + O(\alpha_{\eps}^2)
    \right)
    =
    o(1).
    \qedhere
  \end{equation*}
\end{proof}


\section{Generalisations and open problems}
\label{sec:some-generalisations}

Finally, we discuss some extensions of Theorem
\ref{thm:model:gamma-type-conv} and open problems.
First of all, the proof is easily adapted to non-symmetric potentials
$W$. In this case one splits $\sigma$ into two constants
\begin{equation*}
  \sigma^+
  =
  \int_0^1 2 \sqrt{W(u)} \,d u
  \qquad\text{and}\qquad
  \sigma^-
  =
  \int_{-1}^0 2 \sqrt{W(u)} \,d u
\end{equation*}
and distinguishes proper interfaces and ghost interfaces in the
different phases $u=\pm1$ by the line tensions $\sigma^+ + \sigma^-$,
$2\sigma^+$, or $2\sigma^-$ instead of $\sigma$ in the limit energy.
One may also consider potentials as $W(u) = (1-u)^2$ and drop the
phase integral constraint. Then there is only one lipid phase, and
$u_\eps$ is merely an auxiliary variable that allows curvature induced
kinks in the limit.
As already stated, rigidities other than $k^\pm = -k_G^\pm = 1$ can be
considered as long as the conditions
\eqref{eq:intro:param_restrictions} hold. Also, the $u^2$ in
$\calH_\eps$ can be replaced by other continuous functions that are
equal to $0$ for $u=0$ and $1$ for $u=\pm1$.

Without change of the proof, the constraint of prescribed area for the
approximate setting can be relaxed to
\begin{equation*}
  0
  <
  \inf_{\gamma \in \calC_\eps} \calA_\gamma
  \leq
  \sup_{\gamma \in \calC_\eps} \calA_\gamma
  <
  \infty,
\end{equation*}
and thus incorporated as penalty term in the energy. The same is true
for the phase integral.
Other constraints that change continuously under the convergence
proved in Lemma \ref{lem:proof:equi-coercivity} can also be imposed,
for instance on the enclosed volume $\calV_\gamma = \pi
\int_{M_\gamma} x' y^2 \,d t$.


The arguments can be adapted to open surfaces of revolution generated
by curves $\gamma=(x,y) \colon I \to \bbR \times \bbR_{>0}$ with
prescribed boundary conditions. Alternatively, a uniform bound on $y$
derived from an energy like $\calF_\eps + \calG$, where
\begin{equation*}
  \calG(\gamma)
  =
  \int_{M_\gamma(\partial I)} d\calH^1
  =
  2\pi \sum_{s \in \partial I} y(s),
\end{equation*}
is sufficient, as it still ensures a bound on the curve length
\cite{Helmers12}; the corresponding limit $\calF+\calG$ models open
lipid bilayers with kinks, see for instance \cite{TuOu03}.
If boundary conditions for $\gamma'$ are prescribed, then kinks may
appear at the boundary in the sense that the tangent vector of the
limit curve differs from the prescribed one and contributes to the
limit energy like a ghost interface.


\subsection{Gauss curvature, axis of revolution, and full
  \texorpdfstring{$\Gamma$}{\textGamma}-limit of
  \texorpdfstring{$\calF_\eps$}{F\textepsilon}}
\label{sec:gauss-curvature-axis}

In the study of membranes it is often assumed that $k_G^+ = k_G^-$;
then the Gauss curvature integral in \eqref{eq:intro:energy} is a
topological invariant and omitted, see for instance \cite{JuLi96}.
Therefore it is desirable to drop the Gauss curvature in $\calF_\eps$
and to consider
\begin{equation*}
  \widehat \calF_\eps(\gamma,u)
  =
  \int_{M_\gamma} u^2 (H-H_{s}(u))^2 \,d\mu
  +
  \calI_\eps(\gamma,u).
\end{equation*}
Since $\widehat \calF_\eps$ still bounds the first variation of
$M_\gamma$, see Lemma \ref{lem:model:variation-bound} and the remark
after Lemma \ref{lem:surf:length-bound}, the arguments for
equi-coercivity and the lower bound in bulk and at (ghost) interfaces
still apply. At the axis of revolution we obtain the estimate
\begin{equation}
  \label{eq:aor-lb-h-only}
  \liminf_{\eps \to 0} \widehat \calF_\eps(\gamma_\eps,u_\eps,R)
  \geq
  \hat\sigma
  \liminf_{\eps \to 0}
  \int_{M_\eps(R)} |H_\eps| \,d\mu_\eps
\end{equation}
in place of \eqref{eq:proof:aor-lb}. A subsequence of the measure
$\nu_\eps = |H_\eps| \mu_\eps$ converges to some
$\nu$ in the sense of finite Radon measures, thus the right
hand side of \eqref{eq:aor-lb-h-only} is bounded from below by
$\hat\sigma\nu(R)$.
We would like to connect $\nu(R)$ or \eqref{eq:aor-lb-h-only} to the
limit curve $\gamma$, but due to the lack of good bounds on
$(\gamma_\eps)$ in $R$, we are able to do this only in special
situations. If for instance $J \subset R$ is an interval,
$|\gamma_\eps'| \equiv q_\eps$, $x_\eps'\geq0$, and $\phi_\eps$ an
angle function for $\gamma_\eps$ we can use the angle formulas
\eqref{eq:surf:curve-derivative-angle},
\eqref{eq:surf:curvatures-angle} and integrate by parts to find
\begin{equation}
  \label{eq:liminf-with-mean-curvature1}
  \frac{1}{2\pi} \int_{M_\eps(J)} H_\eps \,d\mu_\eps
  =
  q_\eps \int_J \phi_\eps \sin \phi_\eps + \cos \phi_\eps \,d t
  - \phi_\eps y_\eps|_{\partial J}
  \geq
  \calL_\eps(J)
    - \phi_\eps y_\eps|_{\partial J},
\end{equation}
where the last inequality holds due to $\phi_\eps \sin \phi_\eps +
\cos \phi_\eps \geq 1$ for $\phi_\eps \in [-\pi/2,\pi/2]$. Exhausting
$\interior R$ by such intervals $J$, we conclude
\begin{equation*}
  \liminf_{\eps \to 0}
  \int_{M_\eps(\interior R)} |H_\eps| \,d\mu_\eps
  \geq
  2\pi \liminf_{\eps \to 0}
  \calL_\eps(\interior R)
  \geq
  2\pi \calL_\gamma(\interior R).
\end{equation*}
However, $R$ is a closed set, and in general we only know
$\liminf \widehat\calF_\eps(\cdot,\cdot,R) \geq 0$. Our limit
functional is thus
\begin{equation*}
  \widehat \calF(\gamma,u)
  =
  \int_{M_\gamma} (H-H_s(u))^2 \,d\mu
  +
  2\pi \sum_{s \in S_\gamma \cup S_u}
  \left( \sigma + \hat\sigma |[\gamma'](s)| \right) y(s),
\end{equation*}
which does not provide any information about the axis of revolution at
all and, as seen before, cannot be recovered in general.

The following example satisfies $R = \partial R$ and shows that
finding the lower bound in $\interior R$ is not sufficient. Moreover,
it highlights the difference between our limit energies and the full
$\Gamma$-limit of $\calF_\eps$ and $\widehat \calF_\eps$.
Let $\rho = (x_\rho,y_\rho)$ be a periodic rectangular signal and
\begin{equation*}
  \gamma_{\eps_k}(t)
  =
  (x_k(t),y_k(t))
  =
  \begin{pmatrix}
    0 \\ \eps_k
  \end{pmatrix}
  +
  \begin{pmatrix}
    x_\rho(k t)/k^2 \\ y_\rho(k t)/k
  \end{pmatrix}
\end{equation*}
for $t$ in some interval $J$, where $\eps_k = c/k$, $0<c \ll 1$; see
Figure \ref{fig:kink:example-2}. Using the constructions from Section
\ref{sec:proof:ub}, it is straightforward to check that
$(\gamma_{\eps_k},u_{\eps_k})$ with $u_{\eps_k} \equiv 0$ can be made
admissible by smoothing the kinks, reparametrising for constant speed,
and extending $\gamma_{\eps_k}(J)$ smoothly to obtain closed surfaces
of revolution of prescribed area. Then $\widehat
\calF_{\eps_k}(\gamma_{\eps_k},u_{\eps_k})$ and
$\calF_{\eps_k}(\gamma_{\eps_k},u_{\eps_k})$ are uniformly bounded,
and in the limit membrane $(\gamma,u)$ the segment
$\gamma_{\eps_k}(J)$ collapses to a single point. Hence, if the other
segments of $\gamma$ do not touch the axis of revolution in the
interior, we have $R = \partial R$.
The length of $\gamma_{\eps_k}(J)$ is $2+1/k$, thus we find $\liminf
\calF_{\eps_k}(\gamma_{\eps_k},u_{\eps_k},J) \geq 4\pi$, but
$\calF(\gamma,u,J) = \calL_\gamma(J) = 0$.
This suggests that $\Gamma$-$\lim \calF_\eps$ is not geometric, that
is, it is not invariant under reparametrisations.

\begin{figure}
  \centering
  \includegraphics[width=.6\textwidth]{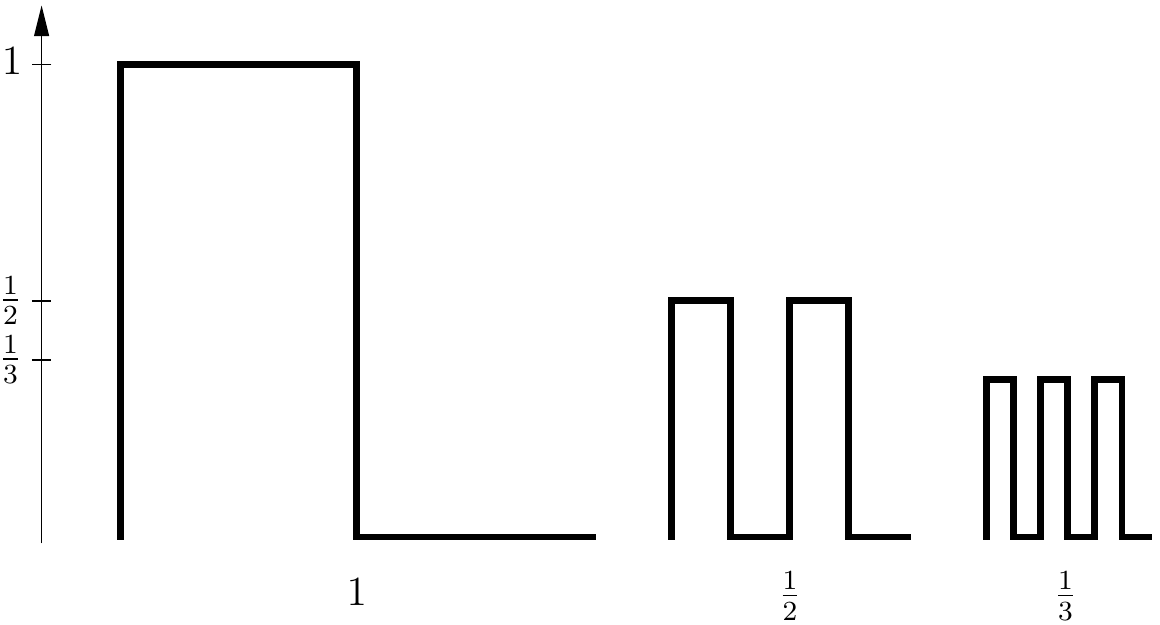}
  \caption{The shape of $\gamma_{\eps_1}$, $\gamma_{\eps_2}$,
    $\gamma_{\eps_3}$, neglecting the base height $\eps_k$.}
  \label{fig:kink:example-2}
\end{figure}

Recall, on the other hand, that the lower limit of $\calF_\eps$ at the
axis of revolution is non-negative. Since changes of $(\gamma,u) \in
\calD \times \calQ$ at the axis of revolution do not affect area or
phase integral, removing segments of $(\gamma,u)$ at the axis is
admissible and only reduces the limit energies. Minimisers of $\calF$
and $\Gamma$-$\lim \calF_\eps$ should thus have no energy at the axis
of revolution at all, and for such membranes the two energies agree.


\section*{Acknowledgements}

It is a pleasure to thank Barbara Niethammer and Michael Herrmann for
their advice and many discussions.
Gratitude is also expressed to the referees whose careful work
improved the exposition.
This work was supported by the EPSRC Science and Innovation award to
the Oxford Centre for Nonlinear PDE (EP/E035027/1).


\bibliography{paper}
\bibliographystyle{abbrv}

\end{document}